\documentclass[12pt, reqno]{amsart}

\usepackage[margin=0.75in]{geometry}
\usepackage[
  bookmarks=true]{hyperref}
\usepackage[OT2, T1]{fontenc}
\usepackage[english]{babel}
\usepackage[utf8]{inputenc}
\usepackage{csquotes}
\usepackage[final]{microtype}
\usepackage{lmodern}
\usepackage{amsthm}
\usepackage{amssymb}
\usepackage{mathrsfs}
\usepackage{enumerate}
\usepackage{tikz-cd} 
\usepackage{tikz}
\usepackage{multicol}
\usepackage{multirow}
\usepackage{tikz-qtree}
\usepackage{rotating}
\usepackage{graphicx}
\usepackage{comment}
\usepackage{enumitem}
\usepackage{manfnt}
\usetikzlibrary{arrows,calc,matrix,trees,arrows.meta,positioning,decorations.pathreplacing,bending}

\newtheorem{theorem}{Theorem}[section]
\newtheorem*{theorem*}{Theorem}
\newtheorem*{goal*}{Goal}
\newtheorem*{question*}{Question}

\newtheorem{proposition}[theorem]{Proposition}
\newtheorem{lemma}[theorem]{Lemma}
\newtheorem{corollary}[theorem]{Corollary}

\theoremstyle{definition}

\newtheorem{remark}[theorem]{Remark}
\newtheorem{definition}[theorem]{Definition}
\newtheorem{example}[theorem]{Example}





\newcommand{\Sel}{\mathrm{Sel}} 

\newcommand{\Hom}{\mathrm{Hom}}

\newcommand{\Res}{\mathrm{Res}}

\newcommand{\genlegendre}[4]{%
  \genfrac{(}{)}{}{#1}{#3}{#4}%
  \if\relax\detokenize{#2}\relax\else_{\!#2}\fi
}


\DeclareSymbolFont{cyrletters}{OT2}{wncyr}{m}{n}
\DeclareMathSymbol{\Sha}{\mathalpha}{cyrletters}{"58}

\setcounter{tocdepth}{2}

\begin{document}

\title[]{Rank growth of elliptic curves over $S_3$ extensions with fixed quadratic resolvents}
\author{Daniel Keliher}
\address{Daniel Keliher, Massachusetts Institute of Technology, Concourse Program, Cambridge, MA 02139, USA}
\email{keliher@mit.edu}
\urladdr{\url{https://www.danielkeliher.com/}}

\author{Sun Woo Park}
\address{Sun Woo Park, Max Planck Institute for Mathematics,  Vivatsgasse 7, 53111 Bonn, German}
\email{s.park@mpim-bonn.mpg.de}
\urladdr{\url{https://sites.google.com/wisc.edu/spark483}}

\begin{abstract}
We study the probability with which an elliptic curve $E/k$, subject to some technical conditions, gains rank upon base extension to an $S_3$-cubic extension $K/k$ with quadratic resolvent field $F/k$,  all three fields of which are subject to some mild technical conditions. To do so, we determine the distribution (under a non-standard ordering) of Selmer ranks of an auxiliary abelian variety associated to $E$ and $S_3$-cubic extensions $K/k$ following ideas of Klagsbrun, Mazur, and Rubin. One corollary of this distribution is that $E$ gains rank by at most one upon base extension to $K$ with probability at least $31.95\%$.
\end{abstract}
\maketitle
\tableofcontents

\section{Introduction}

Questions related to expected ranks (or ranks of Selmer groups) of elliptic curves have a rich history. In \cite{SDtwists}, Swinnerton-Dyer determined the probability distribution of the rank of the 2-Selmer group of quadratics twists by $d$ of elliptic curve with full rational two torsion as the number of prime factors of $d$ grows arbitrarily large. This was the first work to view the successive manipulation of local conditions of the Selmer group as a Markov process. In \cite{Kane2013}, Kane obtained the same distribution in the same setting, but without the ``non-standard'' ordering on twists adopted by Swinnerton-Dyer.

In \cite{KMR14}, Klagsbrun, Mazur, and Rubin determined the average size of the $p$-Selmer ranks of an elliptic curve meeting some mild conditions. Their work similarly adopts this Markov chain perspective and relies on a different non-standard ordering, which we likewise adopt in this paper  (see Definition \ref{def:FAN}). Explicit versions of the results of \cite{KMR14} were proved in the function field setting by the second author \cite{park2022prime}.  

Finally, a series of papers by Smith \cite{Smith1}, \cite{Smith2}, determined the distribution of $2^\infty$-Selmer coranks, essentially resolving (assuming the finiteness of the Tate-Shafarevich group) a conjecture of Goldfeld \cite{GoldfeldConjecture}, under the usual ordering of quadratic twists.

Results like these are also connected to enumerating field extensions (say, with prescribed degree and Galois group) for which a fixed elliptic curve either does or does not gain rank upon base extension.  For quadratic extensions, this is Goldfeld's conjecture, but a bit more is known. Lemke Oliver and Thorne \cite{LOT} gave lower bounds for the number of degree $n \geq 2$, $S_n$-extensions $K/\mathbb{Q}$ for which a fixed $E/\mathbb{Q}$ \textit{does} gain rank upon base extension to $K$. The first author \cite{keliherS4} gave lower bounds for the number of quartic $A_4$ and $S_4$-extensions for which a fixed $E/\mathbb{Q}$ of rank zero \textit{does not} gain rank upon base change. Pathak and Ray \cite{PathakRay} likewise gave lower bounds on the number of meta-abelian extensions for which a fixed curve of rank zero does not gain rank. For all three results, the lower bounds on the number of extensions meeting the prescribed rank growth behavior are a density zero (but still an infinitely large) set among the set of degree $n$ number fields with bounded discriminant and specified Galois group when compared to known asymptotics, or to Malle's predictions \cite{Malle1}, \cite{Malle2}, for the number of such extensions. 

In this present work, our motivating question is the following. For a fixed elliptic curve $E$ defined over a number field $k$, how often does the rank of $E$ change upon base extension to an $S_3$-cubic extension of $k$? Here we deal with the generic case in which the Galois group of the 3-torsion field of $E/k$ contains $\mathrm{SL}_2(\mathbb{F}_3)$.

A different manifestation of this problem comes via studying Selmer groups of cubic twists family of an elliptic curve with $j$-invariant zero. Producing a trivial Selmer group for such a twist of a curve by $d$ essentially produces a cubic field obtained by adjoining $\sqrt[3]{d}$ for which the original curve does not gain rank upon base change.   In \cite{ABS}, Alp{\"o}ge, Bhargava and Shnidman determined the average size of 2-Selmer groups in such a family. The average size of the 3-Selmer groups of the same family was determined by Koymans and Smith in \cite{KoymansSmith}.

\subsection{Main Results} 
Let $k$ be a number field and let $F/k$ be a quadratic extension. We classify $S_3$-cubic extensions of $k$ with \emph{quadratic resolvent} $F$, i.e. the quadratic extension intermediate to the Galois closure of $K/k$ and $k$, into three categories 
\begin{center}
    (A) When $\zeta_3 \in k \subset F$; \quad 
    (B) When $\zeta_3 \notin k$  and $F = k(\zeta_3)$; \quad 
    (C) When $\zeta_3 \notin F$.
\end{center}
These cases emerge naturally from Lemma \ref{lem:S3byQuad} and are explained further in Remark \ref{ex:S3byA}. We fully describe the allowable quadratic extensions $F/k$ for our results in Definition \ref{def:admissible}. 

We first state a coarse version of our results which gives a lower bound on how often one might expect the rank of $E/k$ to grow by at most one upon base extension to an $S_3$-cubic extension. 

\begin{theorem} \label{theorem:mainA} Fix a number field $k$ and a quadratic extension $F/k$ which is admissible in the sense of Definition \ref{def:admissible}. Fix an elliptic curve $E/k$ for which $\mathrm{Gal}(k(E[3])/k) \supset \mathrm{SL}_2(\mathbb{F}_3)$,
and $F$ is linearly disjoint from $k(E[3])$ over $k$. 
Then the density, with respect to the fan structure (see Definition \ref{def:FAN}), of $S_3$-cubic extensions  $K/k$ with quadratic resolvent $F/k$ for which $\mathrm{rk}(E/K) \leq \mathrm{rk}(E/k) + 1$ is at least $31.95\%$.
\end{theorem}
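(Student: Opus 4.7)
The plan is to reduce the rank growth condition to a statement about the 3-Selmer rank of an auxiliary abelian surface, then read off the probability from the paper's main distribution theorem. For a fixed $S_3$-cubic $K/k$, the representation $\mathrm{Ind}_{\{e\}}^{S_3} \mathbb{1} \cong \mathbb{1} \oplus \rho_2$ (with $\rho_2$ the standard two-dimensional irreducible of $S_3$) yields a $k$-isogeny decomposition $\mathrm{Res}_{K/k}(E_K) \sim E \times A_K$, where $A_K$ is an abelian surface whose $k$-isogeny class depends on $K$. Since Mordell--Weil rank is additive across isogenies and $\mathrm{rk}(\mathrm{Res}_{K/k}(E_K)/k) = \mathrm{rk}(E/K)$, we get $\mathrm{rk}(E/K) - \mathrm{rk}(E/k) = \mathrm{rk}(A_K/k)$. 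The Kummer sequence then forces $\mathrm{rk}(A_K/k) \leq \dim_{\mathbb{F}_3} \mathrm{Sel}_3(A_K/k)$, so it suffices to show
\[
    \mathbb{P}_{\mathrm{fan}}\bigl[\dim_{\mathbb{F}_3} \mathrm{Sel}_3(A_K/k) \leq 1\bigr] \geq 0.3195.
\]

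Second, I would invoke the distribution theorem for $\dim_{\mathbb{F}_3} \mathrm{Sel}_3(A_K/k)$, which (following \cite{KMR14}) is the technical centerpiece of the paper. The hypothesis $\mathrm{Gal}(k(E[3])/k) \supset \mathrm{SL}_2(\mathbb{F}_3)$ together with linear disjointness of $F$ and $k(E[3])$ guarantees that the mod-$3$ Galois representation on $A_K[3]$ is large enough to establish Chebotarev-style equidistribution of local Selmer conditions. In the fan ordering, adding one new ramified prime to $K$ corresponds to a transition step in a Markov chain on Selmer ranks whose transition probabilities are governed by Poitou--Tate duality; the distribution theorem produces an explicit sequence $\{p_n\}_{n \geq 0}$ for $\dim_{\mathbb{F}_3} \mathrm{Sel}_3(A_K/k) = n$ in the fan limit, and the corollary reduces to the numerical inequality $p_0 + p_1 \geq 0.3195$.

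The main obstacle will be setting up and analyzing the local transitions correctly across the three cases (A), (B), (C) distinguished above, since each imposes a different relationship between the quadratic resolvent $F$, the field $k(\zeta_3)$, and the Kummer theory used to parametrize $K/F$. Ensuring that the Markov chain is irreducible and that local conditions equidistribute uniformly requires careful accounting of behavior at primes above $3$, at archimedean places, and at primes of bad reduction for $E$; these are precisely what the notion of \emph{admissible} quadratic extension (Definition \ref{def:admissible}) is engineered to control. Given that setup, verifying $p_0 + p_1 \geq 0.3195$ is then a direct manipulation of an infinite product/partition sum analogous to the one appearing in \cite{KMR14}, which can be bounded below by truncating after finitely many terms.
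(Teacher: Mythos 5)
Your high-level strategy mirrors the paper's: reduce rank growth to a statement about the $\mathbb{F}_3$-dimension of an auxiliary Selmer group, then read the $31.95\%$ off the stationary distribution of a Markov chain in the fan limit. But the specific Selmer group you name is not the one the paper analyzes, and this is not a cosmetic discrepancy. The paper works with the $4$-dimensional variety $B_{K/k} = \mathrm{Ker}(\mathrm{N}_F^{\tilde K}: \mathrm{Res}_k^{\tilde K}E \to \mathrm{Res}_k^F E)$, which comes equipped with the endomorphism $1-\sigma_K$, and the quantity whose distribution is computed (Theorem \ref{thm:mainB}/Theorem \ref{thm:main-full}) is $\dim_{\mathbb{F}_3}\Sel_{1-\sigma_K}(B_{K/k}/k)$, not $\dim_{\mathbb{F}_3}\mathrm{Sel}_3(A_K/k)$ for the $2$-dimensional complement $A_K$ of $E$ inside $\mathrm{Res}_{K/k}E$. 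The reason the paper insists on the $(1-\sigma_K)$-Selmer group of $B_{K/k}$ is Proposition \ref{prop:torsion}: $B_{K/k}[1-\sigma_K]\cong(\mathrm{Res}_k^F E)[3]$, so that $\Sel_{1-\sigma_K}(B_{K/k}/k)\subset H^1(F,E[3])$ for \emph{every} $K$ with quadratic resolvent $F$. This places all the Selmer groups in a single ambient cohomology group independent of $K$, with only the local conditions varying as $K$ varies; that fixed ambient space is exactly what makes the Markov-chain machinery of Sections \ref{sec:markov-chains-ranks}--\ref{sec:fans} applicable. The full $3$-Selmer group $\mathrm{Sel}_3(A_K/k)$ does not sit inside a $K$-independent cohomology group, and moreover its $\mathbb{F}_3$-dimension is (generically) twice the $(1-\sigma_K)$-Selmer dimension because $3$ factors as $(1-\sigma_K)(1-\sigma_K^2)$ up to units, so the numerics you'd need ($p_0+p_1\geq 0.3195$ for $\mathrm{Sel}_3$) are not what the paper establishes.

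Two smaller points: the representation-theory identity should read $\mathrm{Ind}_{H}^{S_3}\mathbf{1}_H \cong \mathbf{1}\oplus\rho_2$ for $H$ an order-$2$ subgroup fixing $K$, not $\mathrm{Ind}_{\{e\}}^{S_3}\mathbf{1}$ (which is the regular representation, of dimension $6$, corresponding to $\mathrm{Res}_{\tilde K/k}E$). And the paper's distribution gives $p_0+p_1 = \rho_E\cdot\mathbf{E}_{even}(0) + (1-\rho_E)\cdot\mathbf{E}_{odd}(1) = 0.3195$ exactly (because $\mathbf{E}_{even}(0)=\mathbf{E}_{odd}(1)$), not merely $\geq$; the ``at least'' in the theorem comes from the inequality $\mathrm{rk}(E(K))-\mathrm{rk}(E(k))\leq\dim_{\mathbb{F}_3}\Sel_{1-\sigma_K}(B_{K/k}/k)$, not from slack in the stationary distribution. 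If you replace $A_K$ by $B_{K/k}$ and $\mathrm{Sel}_3$ by $\Sel_{1-\sigma_K}$, and route the rank relation through Lemma \ref{lem:S3rankgrowth} rather than the restriction-of-scalars decomposition, your argument becomes the paper's.
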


The ordering on the $S_3$-cubic extensions (from which we obtain various densities) is according to the \textit{fan structure}. For further discussion of this ordering, see Remark \ref{rmk:fan}.  

Given an $S_3$-cubic extension $K/k$, there exists an auxiliary four dimensional abelian variety $B_{K/k}$ depending on $K/k$, its quadratic resolvent $F/k$, and $E/k$ (see Definition \ref{def:4-dimAV} and the surrounding discussion) whose rank of $k$-rational points satisfies the relation 
$$\mathrm{rk}(B_{K/k}(k))= 2\left(\mathrm{rk}(E(K)) - \mathrm{rk}(E(k))\right).$$
That is, the variety $B_{K/k}$ measures the rank growth of $E$ in $S_3$-cubic extensions. 

Theorem \ref{theorem:mainA} follows from the following result on the probability distribution of dimensions of $1 - \sigma_K$ Selmer groups of families of abelian varieties $B_{K/k}$ (denoted as $\Sel_{1-\sigma_K}(B_{K/k}/k)$), where $\sigma_K \in \text{Gal}(\tilde{K}/k)$ is any element of order $3$. Their dimensions as $\mathbb{F}_3$-vector spaces give an upper bound on rank growths of $E$ with respect to $K/k$:
\begin{equation*}
    \dim_{\mathbb{F}_3} \Sel_{1-\sigma_K}(B_{K/k}/k) \geq \mathrm{rk}(E(K)) - \mathrm{rk}(E(k)).
\end{equation*}

\begin{theorem}\label{thm:mainB} Fix a number field $k$ and a quadratic extension $F/k$ which is admissible in the sense of Definition \ref{def:admissible}. Fix an elliptic curve $E/k$ for which $\mathrm{Gal}(k(E[3])/k) \supset \mathrm{SL}_2(\mathbb{F}_3)$
and $F$ is linearly disjoint from $k(E[3])$ over $k$.
Then the density, with respect to the fan structure (see Definition \ref{def:FAN}), of $S_3$-cubic extensions  $K/k$ with quadratic resolvent $F$ for which the variety $B_{K/k}$ satisfies $\dim_{\mathbb{F}_3}\Sel_{1-\sigma_K}(B_{K/k}/k) = s$
is
\begin{equation*}
    \begin{cases}
        \rho_E  \cdot \prod_{k=0}^\infty \frac{1}{1 + 3^{-k}} \cdot \prod_{k=1}^{\frac{s}{2}} \frac{3}{3^k - 1} &\text{ if } s \text{ is even}, \\
        (1-\rho_E) \cdot \prod_{k=0}^\infty \frac{1}{1 + 3^{-k}} \cdot \prod_{k=1}^{\frac{s-1}{2}} \frac{3}{3^k - 1} &\text{ if } s \text{ is odd},        
    \end{cases}
\end{equation*}
where $\rho_E \in [0,1]$ is a computable constant depending only on $E$ and is given by Definition \ref{def:rhoE}.
\end{theorem}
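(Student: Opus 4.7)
The plan is to prove Theorem \ref{thm:mainB} by realizing the family of Selmer dimensions $\dim_{\mathbb{F}_3}\Sel_{1-\sigma_K}(B_{K/k}/k)$ as the trajectory of a Markov chain on $\mathbb{Z}_{\geq 0}$ as $K$ varies along the fan structure, and then computing its stationary distribution in closed form. This follows the strategy of Klagsbrun--Mazur--Rubin \cite{KMR14}, in the refined form developed by the second author \cite{park2022prime}, specialized to $S_3$-cubic extensions with fixed quadratic resolvent $F$.

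I would start with a local comparison lemma: if $K/k$ and $K'/k$ are fan-adjacent $S_3$-cubic extensions differing by a single twist prime $\mathfrak{q}$, then $\Sel_{1-\sigma_K}(B_{K/k}/k)$ and $\Sel_{1-\sigma_{K'}}(B_{K'/k}/k)$ agree at every place away from $\mathfrak{q}$, and their $\mathbb{F}_3$-dimensions differ by at most one. Using the non-degeneracy of the local Tate pairing on $H^1(k_\mathfrak{q}, B_{K/k}[1-\sigma_K])$ together with a Poitou--Tate global duality argument, the size of the change is controlled by the image of an explicit local class depending only on the Frobenius conjugacy class of $\mathfrak{q}$ in an appropriate Galois group involving $k(E[3])$ and $F$. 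Because $\mathrm{Gal}(k(E[3])/k) \supset \mathrm{SL}_2(\mathbb{F}_3)$ and $F$ is linearly disjoint from $k(E[3])$ over $k$, Chebotarev equidistributes these Frobenius classes, which promotes the sequence of Selmer dimensions along the fan to a genuine Markov chain with explicit rational transition probabilities $T(r \to r')$ depending only on $r$.

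Next I would identify the stationary distribution. The key input is a Gaussian-binomial style counting, for each current dimension $r$, of the number of local Frobenius classes producing a transition $r \to r-1$, $r \to r$, or $r \to r+1$, in the spirit of \cite[Section 5]{KMR14}. Substituting the conjectured $\pi(s)$ from the theorem into the balance equations for $T$ reduces to a pair of telescoping identities (one on even $s$, one on odd $s$) involving the ratios $\frac{3}{3^k-1}$; the infinite product $\prod_{k=0}^\infty (1+3^{-k})^{-1}$ is then forced by normalization. The split between even and odd parities with weights $\rho_E$ and $1-\rho_E$ reflects the fact that although individual transitions can flip parity, the chain's equilibrium assigns a well-defined limiting probability to each parity class; this limiting probability is a product of local densities over the places of $k$, packaged as Definition \ref{def:rhoE}.

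The main obstacle is the local analysis in the first step: at primes of bad reduction for $E$ or primes above $3$, the transition probability is not the generic one predicted by Chebotarev alone, and a careful reduction-type analysis of $B_{K/k}[1-\sigma_K]$ as a local Galois module is required. The bookkeeping of these local corrections, place-by-place, is precisely what condenses into the invariant $\rho_E$ and pins down its range in $[0,1]$. A secondary technical point is promoting the Chebotarev-driven average transitions into an honest pointwise limiting distribution along the fan (rather than a Cesàro average); this is handled by the mixing and spreading arguments of \cite{KMR14}, which transfer to the $S_3$ setting with essentially only notational changes.
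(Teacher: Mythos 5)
Your proposal correctly identifies the high-level strategy (Markov chain on Selmer dimensions indexed by the fan, driven by Chebotarev, with stationary distribution computed via KMR-style operator algebra), but it contains a genuine structural error at the heart of the argument that would lead to the wrong stationary distribution.

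You claim that when $K$ and $K'$ are fan-adjacent, differing at a single twist prime $\mathfrak{q}$, the $\mathbb{F}_3$-dimensions of $\Sel_{1-\sigma_K}(B_{K/k}/k)$ and $\Sel_{1-\sigma_{K'}}(B_{K'/k}/k)$ differ by at most one. This is false, and the correct statement is the crux of the paper. By Shapiro's lemma, $\Sel_{1-\sigma_K}(B_{K/k}/k) \cong \Sel(E[3],\omega)_F$, and the twist prime is a prime $p$ of $k$ that becomes either two primes $\mathfrak{p}_0\mathfrak{p}_1$ or one inert prime in $F$. The key technical tool (Lemma \ref{lem:corestrictions}) shows via the corestriction map that the local conditions at $\mathfrak{p}_0$ and $\mathfrak{p}_1$ are forced to be identical, and Lemmas \ref{lem:split_prime_change} and \ref{lem:inert_prime_change} then compute the resulting change: it is always $0$, $\pm 2$, or $\pm 4$. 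The parity is preserved. Consequently the Markov operator is the ``alternating'' Lagrangian operator with transitions $r \to r\pm 2$, not the usual operator with $r \to r \pm 1$; its space of stationary distributions is two-dimensional, spanned by $\mathbf{E}_{\mathrm{even}}$ and $\mathbf{E}_{\mathrm{odd}}$, and the coefficient $\rho_E$ in the theorem is simply the mass of the initial distribution $E_1$ on even dimensions (Definition \ref{def:rhoE}), which is a conserved quantity of the chain.

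Your explanation of the $\rho_E$ split -- ``although individual transitions can flip parity, the chain's equilibrium assigns a well-defined limiting probability to each parity class'' packaged as ``a product of local densities over the places of $k$'' -- therefore does not match the actual mechanism. If transitions could flip parity, a chain with the proposed $\pm 1$ transition structure would be irreducible with a unique stationary distribution, and there would be no freedom to interpolate by $\rho_E$ at all. The split appears precisely because parity is a conserved quantity, not because of local corrections at bad primes or primes above $3$; the role of bad primes and primes in $\Sigma$ is only to determine the initial distribution $E_1$ on $\Omega_1$, whose even-parity mass is $\rho_E$. If you run the balance-equation argument you propose with the correct $\pm 2$ transitions, you will indeed land on the two families $\mathbf{E}_{\mathrm{even}}$ and $\mathbf{E}_{\mathrm{odd}}$, but you then still need the corestriction analysis to know that the chain never leaves its parity class, and you need to compute $\rho_E$ from $E_1$ rather than from any equilibrium condition.
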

We list the percentage values of the probability distribution of the dimensions of Selmer groups for the case where $\rho_E = 1$ and $\rho_E = 0$. Each respective case corresponds to families of $S_3$-cubics whose dimensions of Selmer groups of $B_{K/k}$ are all even or all odd. The respective probability distribution for different values of $\rho_E \in (0,1)$ can be obtained from linearly interpolating the two probabilities in each row. In particular, the density, with respect to the fan structure, of $S_3$-cubic extensions $K/k$ wtih quadratic resolvent $F/k$ for which $\text{rk}(E/K) = \text{rk}(E/k)$ is at least $\rho_E \cdot 31.95\%$.
\begin{center}
    \begin{tabular}{c||cc}
         & \multicolumn{2}{c}{Density}  \\
        $s$ & $\rho_E = 1$ & $\rho_E = 0$ \\
        \hline \hline 
        0 & 31.9502 & 0 \\
        1 & 0 & 31.9502 \\
        2 &47.9253 & 0\\
        3 & 0& 47.9253 \\
        4 & 17.9720 & 0\\
        5 & 0 & 17.9720 \\
        6 &2.07369 & 0\\
        7 & 0 & 2.07369  \\
        8 & $7.77635 \times 10^{-2}$ & 0 \\
        9 & 0 & $7.77635 \times 10^{-2}$\\
        \hline
        $\geq 10$ & $\leq 9.67988 \times 10^{-4}$ &  $\leq 9.67988 \times 10^{-4}$\\
    \end{tabular}
\end{center}
This theorem is an immediate consequence of Theorem \ref{thm:main-full}. From Corollary \ref{cor:exponential-decay}, we see that the proportion of extensions exhibiting larger rank growth decays quadratic-exponentially.

\subsection{Layout} In Section \ref{sec:Selmer}, we introduce the necessary machinery for discussing Selmer groups of abelian varieties. In Section \ref{section:S3-param}, we explain a very explicit parameterization of $S_3$-cubic extensions with a fixed quadratic resolvent field following \cite{CohenMorra}. In Section \ref{sec:measure-rank-growth}, we show how the measuring rank growth in $S_3$-cubic extensions can be obtained from understanding rank growths in an associated $C_3$-cyclic extensions and constructing four dimensional abelian varieties whose Selmer groups encode much of this data. In Sections \ref{sec:markov-chains-ranks} and \ref{sec:fans}, we explain how to model rank growth questions as one of a distribution of Selmer ranks. Much of the material in these sections is adapted or expanded from \cite{KMR13, KMR14} and tailored to our setting. Finally, in Section \ref{sec:proof-of-main-thms}, we prove our main result, Theorem \ref{thm:main-full}, from which Theorems \ref{theorem:mainA} and \ref{thm:mainB} follow directly.

\subsection{Acknowledgements} This project began after a JMM 2023 special session on arithmetic statistics; we thank the organizers for inviting us. Much of the preliminary work was completed at a PANTS meeting at the University of Georgia, we thank the grant organizers for making such a meeting possible. We are indebted to Jiuya Wang and Robert Lemke Oliver for their guidance and many helpful conversations. We thank Jordan Ellenberg for his input on the proof of Proposition \ref{prop:torsion}. Finally, we thank Valentin Blomer, Ashay Burungale, Michael Daas, Zev Klagsbrun, Peter Koymans, Alex Smith, and Douglas Ulmer. Most of this work was completed while the first author was a postdoc at the University of Georgia and while the second was a graduate student at the University of Wisconsin, Madison and a postdoc at MPIM; we thank these institutions for their support.

\section{Selmer Groups}\label{sec:Selmer}
Here, we briefly introduce Selmer groups of abelian varieties. For a more thorough treatment, see e.g. \cite{Silverman1} or \cite{Cassels}.

Throughout this section, let $A$ and $A'$ be abelian varieties over a number field, $k$, together with an isogeny $\phi: A \to A'$. From these, one has the short exact sequence 
$$0 \to \ker \phi \to A \overset{\phi}{\to} A' \to 0.$$
After taking the long exact sequence on Galois cohomology groups and quotienting it appropriately, one arrives at the short exact sequence 
$$0 \to A'(k)/\phi(A(k)) \to H^1(k, \ker \phi) \to H^1(k, A)[\phi] \to 0.$$
Above, the map $A'(k)/\phi(A(k)) \to H^1(k, \ker \phi)$, the global Kummer map, is given by sending  $\bar P \in A'(k)/\phi(A(k))$ to the class in $H^1(k, \ker \phi)$ of maps all cohomologous to the map $G_k \to \ker \phi$ given by $\sigma \mapsto \bar P^\sigma - \bar P$. One likewise defines the local Kummer maps with $k_v$ in place of $k$ for every place $v$ of $k$. 

Further, for each place $v$ of $k$, there are restrictions maps:
$$A'(k)/\phi(A(k)) \to A'(k_v)/\phi(A(k_v))$$
given by inclusion of points; there are also  
$$H^1(k, \ker \phi) \to H^1(k_v, \ker \phi) \quad \text{and} \quad H^1(k, A) \to H^1(k_v, A)$$
given by the restriction of maps for the absolute Galois group $G_k$ to the decomposition group at $v$. Finally, define the \textit{restricted cohomology} group by 
$$H^1_f(k_v, \ker \phi) := \mathrm{Image}\left( A'(k_v)/\phi(A(k_v)) \to  H^1(k_v, \ker \phi) \right).$$

\begin{definition}\label{def:selmergroup}
    The \emph{$\phi$-Selmer group}, denoted $\mathrm{Sel}_\phi(A/k)$, is given by the exactness of 
    $$0 \to A'(k)/\phi(A(k)) \to \mathrm{Sel}_\phi(A/k) \to \prod_v H^1(k_v, \ker \phi)/H^1_f(k_v, \ker \phi).$$
\end{definition}

\begin{example}[The $p$-Selmer group of an elliptic curve]
    Fix a prime $p$ and let $E$ be an elliptic curve over $k$. Consider the multiplication-by-$p$ maps $[p]:E \to E$. Then 
    $$0 \to E(k)/pA(k) \to \mathrm{Sel}_p(E/k) \to \prod_v H^1(k_v, E[p])/H^1_f(k_v, E[p])$$
    gives the $p$-Selmer group of $E$.
\end{example}

\begin{remark}
    For an abelian variety $A$ defined over a number field $k$, and $p$ a prime, we have that 
    $$\mathrm{rk}(A(k)) \leq \dim_{\mathbb{F}_p}\mathrm{Sel}_p(A/k).$$
    A major theme will be to control $\mathrm{Sel}_p(A/k)$, namely force it to have dimension zero, by judicious choices of twists. This in turn gives us an understanding of the rank of $A(k)$. For instance, if $E^d/k$ is the quadratic twist of an elliptic curve $E/k$ by some $d \in k^\times/(k^\times)^2$, then with $F=k(\sqrt{d})$, one has
    $$\mathrm{rk}(E^d(k)) = \mathrm{rk}(E(F)) - \mathrm{rk}(E(k)) \leq \dim_{\mathbb{F}_2} \mathrm{Sel}_2(E^d/k).$$ 
    The idea is then to understand the 2-Selmer group of $E^d$ from the data of $\mathrm{Sel}_2(E/k)$ and $F=k(\sqrt{d})$. See, for example, \cite{MR10}, where, among other things, the authors bound the number of $F$ for which the $\mathrm{Sel}_2(E^F/k)$ has a prescribed $\mathbb{F}_2$-dimension under some mild conditions.
\end{remark}

\section{Parameterizing \texorpdfstring{$S_3$}{S3} Cubic Extensions} \label{section:S3-param}
In this section we recall, for a quadratic extension of number fields $F/k$, how to construct $S_3$-cubic extensions $K/k$ for which the unique quadratic subfield of the Galois closure $\tilde K/k$ of $K/k$ is $F/k$. That is, $\tilde K/k$ is a Galois $S_3$-extension containing the $S_3$-cubic extension $K/k$ and has unique quadratic sub-extension $F/k$. We follow the set-up of \cite{CohenMorra} and modify their notations somewhat to suit our purposes. 

Let $k$ be a number field, and let $\zeta_3$ be a primitive cube root of unity. Throughout, $F/k$ will denote a quadratic extension of $k$. Now, let $F_z = F(\zeta_3)$ and $k_z = k(\zeta_3)$. Let $\tau_2$ be the generator of $\mathrm{Gal}(F/k) \simeq C_2$ and $\tau$ be the generator of $\mathrm{Gal}(F_z/F)$. 

$S_3$-cubic extensions $K/k$ with quadratic resolvent $F$ depend on $\tau, \tau_2$ and their actions on $\zeta_3$. We need three cases, for each of which we define a subset $T$ of the group ring $\mathbb{Z}[\mathrm{Gal}(F_z/k)]$.

\begin{itemize}
    \item[(i)] $T=\{\tau_2 + 1\}$ when $\tau$ is trivial and $\tau_2$ fixes $\zeta_3$; in this case $\zeta_3 \in k$.
    \item[(ii)] $T=\{\tau_2-1\}$ when $\tau$ is trivial and $\tau_2$ does not fix $\zeta_3$; in this case $F=k(\zeta_3)$.
    \item[(iii)] $T=\{\tau + 1, \tau_2 +1\}$ when $\tau$ is nontrivial and does not fix $\zeta_3$; in this case $\zeta_3 \notin F$.
\end{itemize}
Note that cases (i) , (ii), and (iii) above correspond to cases (3), (4), and (5) in \cite[Remark 2.2]{CohenMorra}

We now state how to obtain $S_3$-cubic extensions with a fixed quadratic resolvent field. The following lemma is the content of Proposition 2.7 and Lemma 2.8 in \cite{CohenMorra}.

\begin{lemma}\label{lem:S3byQuad}
Let $S_3(F_z)$ be the 3-Selmer group of $F_z$, i.e. the group $\{u \in F_z \mid u\mathcal{O}_{F_z}=\mathfrak{q}^3\}/(L^\times)^3$
\begin{enumerate}
\item There exists a bijection between isomorphism classes of $S_3$-cubic extensions $K /k$ with given quadratic resolvent field $F/k$ and equivalence classes of triples $(a_0, a_1, u)$ modulo the equivalence relation $(a_0, a_1, u) \sim (a_1,a_0,1/u)$, where $a_0, a_1$, and $u$ are as follows:
\begin{itemize}
\item The $a_i$ are coprime integral square-free ideals of $F_z$ such that $a_0a_1^2 \in Cl(F_z)^3$ and $a_0a_1^2 \in (I/I^3)[T]$, where $I$ is the group of fractional ideals of $F_z$.
\item $u \in S_3(F_z)[T]$, and $u \neq 1$ when $a_0 =a_1 =Z_L$.
\end{itemize}
\item The condition $a_0a_1^2 \in (I/I^3)[T]$ is equivalent to: $a_1 = \tau_2(a_0)$ in case (i); $a_0=\tau_2(a_0)$ and $a_1 = \tau_2(a_1)$ in case (ii); $a_1=\tau(a_0)=\tau_2(a_0)$ in case (iii).
\item If $(a_0 , a_1)$ is a pair of ideals satisfying the above there exist an ideal $q_0$ and an element $\alpha_0$ of $F_z$ such that $a_0 a^2_1 q^3_0 = \alpha_0 \mathcal{O}_{F_z}$ with $\alpha_0 \in (F_z^*/(F_z^*)^3)[T]$. The cubic extensions $K/k$ corresponding to such a pair $(a_0,a_1)$ are given as follows: for any $u \in S_3(F_z)[T ]$ the extension is the cubic subextension of $N_z = F_z(\sqrt[3]{\alpha_0  u} )$ (for any lift $u$ of $\bar u$).
\end{enumerate}
\end{lemma}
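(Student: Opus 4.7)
The plan is to use Kummer theory applied to the compositum $F_z = F(\zeta_3)$. An $S_3$-cubic extension $K/k$ with quadratic resolvent $F$ amounts to a cyclic cubic extension $\tilde{K}/F$ on which the generator $\tau_2 \in \Gal(F/k)$ acts by inversion on $\Gal(\tilde{K}/F)$. Since $F_z$ contains $\zeta_3$, Kummer theory over $F_z$ identifies cyclic cubic extensions of $F_z$ with nontrivial cyclic subgroups of $F_z^\times / (F_z^\times)^3$. The first step is therefore to identify which Kummer classes descend to a cyclic cubic extension of $F$ whose Galois closure over $k$ realizes $S_3$ with quadratic subfield $F$; this translates into the Galois-module condition that the Kummer class lies in the submodule of $F_z^\times/(F_z^\times)^3$ annihilated by a specific element $T$ of the group ring $\Z[\Gal(F_z/k)]$. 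The three possibilities for $T$ recorded in the lemma are precisely those dictated by whether $\zeta_3$ lies in $k$, in $F \setminus k$, or outside of $F$ entirely, so the case split is forced.

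Once the $T$-invariance condition is fixed, the next step is to convert a representative $\alpha_0 \in F_z^\times$ with $[\alpha_0] \in (F_z^\times/(F_z^\times)^3)[T]$ into combinatorial data. I would decompose the principal ideal as $\alpha_0 \Oh_{F_z} = a_0 a_1^2 q_0^3$ with $a_0, a_1$ coprime squarefree integral ideals, where $a_0$ collects those primes whose exponent in $\alpha_0$ is $1 \bmod 3$ and $a_1$ those whose exponent is $2 \bmod 3$. This decomposition depends only on the class of $\alpha_0$ modulo cubes (up to modifying $q_0$), and the $T$-invariance of $[\alpha_0]$ translates directly into the ideal-theoretic conditions $a_0 a_1^2 \in (I/I^3)[T]$ listed in part (2), which I would verify case by case using the explicit description of $T$. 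The remaining freedom in recovering $\alpha_0$ once $(a_0, a_1, q_0)$ are fixed is precisely the 3-Selmer element $u \in S_3(F_z)$ (recall $S_3(F_z)$ parameterizes those $\alpha$ whose ideal is a cube), which must also lie in $S_3(F_z)[T]$. Finally, the equivalence $(a_0, a_1, u) \sim (a_1, a_0, 1/u)$ reflects the fact that a cyclic cubic extension of $F_z$ has two distinct nontrivial Kummer generators, namely the classes of $\alpha_0$ and $\alpha_0^{-1}$, and these swap the roles of $a_0$ and $a_1$ via the factorization $\alpha_0^{-1}\Oh_{F_z} = a_1 a_0^2 (a_0 a_1 q_0^{-1})^{-3}$.

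The main obstacle is the case-by-case bookkeeping for the $T$-invariance translation, which requires simultaneously tracking how $\tau$ and $\tau_2$ act on $\zeta_3$ and on ideals of $F_z$, and verifying that $\Oh_{F_z}^\times / (\Oh_{F_z}^\times)^3$ contributes exactly the Selmer ambiguity and no more (using that the full group ring element $T$ controls both the unit and ideal parts). One must also check that every triple $(a_0, a_1, u)$ satisfying the conditions actually arises from some $\alpha_0$, i.e., that the class group obstruction encoded by $a_0 a_1^2 \in Cl(F_z)^3$ is the only one. Since these verifications are carried out in detail as Proposition 2.7 and Lemma 2.8 of \cite{CohenMorra}, the proof in the present context proceeds by aligning the notation $(a_0, a_1, u, T)$ used here with theirs and then quoting their bijection.
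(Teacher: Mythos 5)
Your proposal matches the paper's approach: the paper gives no proof for this lemma and simply cites it as the content of Proposition 2.7 and Lemma 2.8 of \cite{CohenMorra}, which is exactly where you land after your (correct) Kummer-theoretic sketch. One small slip in your equivalence computation: from $\alpha_0 \mathcal{O}_{F_z} = a_0 a_1^2 q_0^3$ one gets $\alpha_0^{-1}\mathcal{O}_{F_z} = a_1 a_0^2 (a_0 a_1 q_0)^{-3}$, not $a_1 a_0^2 (a_0 a_1 q_0^{-1})^{-3}$, but this does not affect the substance.
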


\begin{remark}\label{rmk:allcases} Here we briefly summarize how to construct $S_3$-cubic extensions $K/k$ with a fixed quadratic resolvent $F/k$ following from Lemma \ref{lem:S3byQuad} applied to conditions (i), (ii), and (iii) listed above. Given $F/k$, we let $H$ be the Hilbert class field of $F/k$ and let $H_3$ be the extension intermediate to $H$ and $F$ such that $\mathrm{Gal}(H_3/F) \simeq \mathrm{Cl}(F)/\mathrm{Cl}(F)^3$. 
\begin{itemize}
\item[\textbf{Case (A)}] \textit{Constructing $S_3$-cubics following condition (i).}\label{ex:S3byA}
Let $k$ be a number field with $\zeta_3 \in k$ and let $F/k$ be a quadratic extension. Let $H$ be the Hilbert class field of $F$ and let $H_3$ be the extension intermediate to $H$ and $F$ such that $\mathrm{Gal}(H_3/F) \simeq \mathrm{Cl}(F)/\mathrm{Cl}(F)^3$. Let $p$ be a prime of $k$ such that $\mathrm{Frob}_p(H_3/k)=1$. Suppose that $p$ splits completely into two primes $\mathfrak{p}_0$ and $\mathfrak{p}_1$ in $F$ and both are cubes in $\mathrm{Cl}(F)$. Note also that $\tau_2$, the generator of $\mathrm{Gal}(F/k)$, exchanges $\mathfrak{p}_0$ and $\mathfrak{p}_1$. Now, following Lemma \ref{lem:S3byQuad}, there is an ideal $\mathfrak{q}$ of $F$ and element $\alpha \in F$ such that $\mathfrak{p}_0 \mathfrak{p}_1^2 \mathfrak{q}^3 = (\alpha)$. Finally, the extension $F(\sqrt[3]{\alpha})$ is a $C_3$-extension of $F$ and a Galois $S_3$-extension of $k$. Its cubic subextension is an $S_3$-cubic extension of $k$.

Likewise, if $p^{(1)},...,p^{(m)}$ are $m$ distinct primes of $k$ for which $\mathrm{Frob}_{p^{(j)}} (H_3/k)=1$, $1 \leq j \leq m$, and each splits as $\mathfrak{p}_0^{(j)}\mathfrak{p}_1^{(j)}$ in $F$, then setting $\mathfrak{a}_0=\prod_j \mathfrak{p}_0^{(j)}$ and $\mathfrak{a}_0=\prod_j \mathfrak{p}_1^{(j)}$, there are $\alpha \in F$ and $\mathfrak{q}$ so $\mathfrak{a}_0 \mathfrak{a}_1^2 \mathfrak{q}^3 = (\alpha)$ and we can obtain $S_3$-cubic extension as the cubic subextension of $F(\sqrt[3]{\alpha})$.

\textit{Suppose further that $H_3=F$}. Then, up to a choice of an inclusion of lifts of 3-Selmer units, this is a complete description of $S_3$-cubic extensions of $k$ with fixed quadratic resolvent $F/k$.

\item[\textbf{Case (B)}] \textit{Constructing $S_3$-cubics following condition  (ii).}\label{ex:S3byB}
We will work over the rational numbers. Let $F=\mathbb{Q}(\zeta_3)$ and note $\mathrm{Cl}(F)$ is trivial, In the notation of the previous example, $H_3=F$. 
\begin{itemize}
    \item Let $p$ be a rational prime which is inert in $F$; say $p\mathcal{O}_F = \mathfrak{p}$.  The prime ideal $\mathfrak{p}$ is fixed by $\tau_2$ and its ideal class is in $\mathrm{Cl}(F)^3$. By Lemma \ref{lem:S3byQuad}, any such prime $p$ gives an $S_3$-cubic extension as the appropriate subextension of $F(\sqrt[3]{p})$.
    \item Let $p$ be a rational prime which splits completely as $p\mathcal{O}_F = \mathfrak{p}\mathfrak{p}'$ in $F$. Let $\mathfrak{a}_0=\mathfrak{p}$ and $\mathfrak{a}_1=\mathfrak{p}'$. Note that $\mathfrak{a}_0$ and $\mathfrak{a}_1$ are exchanged under the action of $\tau_2$ and satisfy the the conditions of Lemma \ref{lem:S3byQuad}. In particular, the ideal class of $\mathfrak{a}_0\mathfrak{a}_1^2$ lies in $\mathrm{Cl}(F)^3$. Any such prime $p$ also gives an $S_3$-cubic extension as the appropriate subextension of $F(\sqrt[3]{p})$.
\end{itemize}
The same construction works for a base field $k$ for which $\zeta_3 \notin k$ and $F=k(\zeta_3)$ subject to the constraint that $[H_3:F]=1$, where, as above, $H_3$ is the sub-extension of the Hilbert class field of $F$ over $k$ whose Galois group is the cube of the relative class group of $F/k$.

\item[\textbf{Case (C)}] \textit{Constructing $S_3$-cubics following condition (iii).}\label{ex:S3byC}
Let $k$ be a number field and let $F/k$ be a quadratic extension so that $\zeta_3 \notin F$. Set $F_z = F(\zeta_3)$. As in the previous example, let $H_3$ denote the sub-extension of the Hilbert class field of $F_z$ so that  $\mathrm{Gal}(H_3/F_z) \simeq \mathrm{Cl}(F_z)^3$.

Let $p$ be a prime of $k$ so that $\mathrm{Frob}_{p}(H_3/k)=1$. Then $p$ splits completely into two primes in $F$, i.e. $p\mathcal{O}_{F}=\mathfrak{p}\mathfrak{p}'$. Similarly, $\mathfrak{p}$ and $\mathfrak{p}'$ split completely in $F_z$, say as $\mathfrak{p}\mathcal{O}_{F_z} = \mathfrak{P}_0\mathfrak{P}_1$ and $\mathfrak{p}'\mathcal{O}_{F_z} = \mathfrak{P}_0'\mathfrak{P}_1'$. Set $\mathfrak{a}_0=\mathfrak{P}_0\mathfrak{P}_0'$ and $\mathfrak{a}_1=\mathfrak{P}_1\mathfrak{P}_1'$ and note that $\tau(\mathfrak{a}_0)=\tau_2(\mathfrak{a}_0) = \mathfrak{a}_1$. Again following Lemma \ref{lem:S3byQuad}, let $\alpha_0 \in F_z$ be the element so that $\mathfrak{a}_0\mathfrak{a_1}_2 \mathfrak{q}_0^3 = \alpha_0\mathcal{O}_{F_z}$ for some ideal $\mathfrak{q}_0$. As before, a corresponding $S_3$-cubic extension of $k$ is the cubic subfield of $F_z(\sqrt[3]{\alpha_0})$. 

As in case (A), one could take multiple primes $p_i$ of $K$ so that $\mathrm{Frob}_{p_i}(H_3/k)=1$, and repeat the construction with the product of primes of $F_z$ above $p$.

\end{itemize}
\end{remark}

\begin{remark}
    Note that case \textbf{(B)} does not construct all of the $S_3$-cubic extensions implied by condition (ii) and Lemma \ref{lem:S3byQuad}. However, we need the explicit description of the primes in the parameterization afforded by the additional condition that $H_3 = F$.
\end{remark}

\begin{definition}\label{def:admissible}
We call a quadratic extension $F$ of a base field $k$ \textit{admissible} if they jointly meet the three rightmost conditions of any row in the following table.
\begin{center}
    \begin{tabular}{|c||c|c|c|c|}
    \hline
    Our Param. & Cohen-Morra's Cond. & $\zeta_3 \in k$? & $\zeta_3 \in F$? & Further Conditions on $F/k$? \\
    \hline
    \hline
    Case \textbf{(A)} & (i) & Yes & Yes & None \\ \hline
    Case \textbf{(B)} & (ii) &  No & Yes, i.e. $F=k(\zeta_3)$ & $H_3 = F$ \\
    \hline
    Case \textbf{(C)} & (iii) & No & No & None  \\
    \hline
    \end{tabular}
\end{center}
As before, if $H$ is the Hilbert class field of $F/k$, we define $H_3$ to be the extension intermediate to $H$ and $F$ such that $\mathrm{Gal}(H_3/F) \simeq \mathrm{Cl}(F)/\mathrm{Cl}(F)^3$.
\end{definition}

\section{Measuring Rank Growth in \texorpdfstring{$S_3$}{S3} Extensions}\label{sec:measure-rank-growth}
We now need to understand how the rank of an elliptic curve $E/k$ changes upon base extension  to a number field $L/k$ and how rank grows in various subextensions between $L$ and $k$. We will then specialize to the case when $L/k$ is a Galois $S_3$-extension, as in the previous section.

To get the desired relations, we use the results of \cite{DD}. Let $L/k$ be a Galois extension of number fields and set $G = \mathrm{Gal}(L/k)$. Fix an elliptic curve $E/k$ and let $\chi_L$ be the character of the complex representation of $E(L) \otimes \mathbb{C}$. 

For any subgroup $H \leq G$, write $\mathbf{1}_H$ for the trivial character on $H$. Now, for any $K/k$ fixed by $H \leq G$, we have 
$$\mathrm{rk}(E/K) = \langle \mathrm{Ind}_H^G \mathbf{1}_H , \chi_L \rangle.$$

Let $K_i/k$ and $K'_j/k$ be subextensions of $L$ fixed by subgroups $H_i \leq G$ and $H'_j \leq G$, respectively. If, as complex representations of $G$, we have 
$$\bigoplus_i \mathrm{Ind}_{H_i}^G \mathbf{1}_{H_i} = \bigoplus_j \mathrm{Ind}_{H_j}^G \mathbf{1}_{H'_j},$$
then 
$$\sum_i \mathrm{rk}(E(K_i)) = \sum_j \mathrm{rk}(E(K'_j)).$$
Letting $L/k$ be the Galois closure of an $S_3$-cubic extension of $k$, a group theory computation delivers the following lemma.

\begin{lemma}\label{lem:S3rankgrowth}
    Let $K/k$ be a an $S_3$-cubic extension with Galois closure $\tilde K/k$ and quadratic resolvent $F/k$. If $E$ is an elliptic curve over $k$, then
    \begin{equation}
        \mathrm{rk}(E( K)) - \mathrm{rk}(E( k)) = \frac{1}{2}\left(\mathrm{rk}(E(\tilde K))-\mathrm{rk}(E( F))  \right). 
    \end{equation}
\end{lemma}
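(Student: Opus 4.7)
The plan is to apply the character-theoretic framework that the authors set up in the paragraphs immediately preceding the lemma: identify subgroups $H, H' \leq G = S_3$ whose induced trivial characters give a linear relation among permutation characters of $S_3$ matching the desired rank identity, and then read off the rank equality via $\langle \mathrm{Ind}_H^G \mathbf{1}_H, \chi_{\tilde K}\rangle = \mathrm{rk}(E(\tilde K^H))$.

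First, I would enumerate the relevant subgroups of $G = \mathrm{Gal}(\tilde K/k) \cong S_3$ and their fixed fields. The trivial subgroup fixes $\tilde K$; the unique $C_3 = A_3$ fixes the quadratic resolvent $F$; any one of the three order-$2$ subgroups fixes a conjugate copy of the cubic $K$ (all conjugates have the same rank); and the full group fixes $k$. Next I would decompose the induced trivial representations into the irreducibles of $S_3$, namely $\mathbf{1}$, the sign $\epsilon$, and the standard $2$-dimensional $\rho$. Standard Frobenius reciprocity calculations give
\begin{align*}
\mathrm{Ind}_{S_3}^{S_3} \mathbf{1} &= \mathbf{1}, \\
\mathrm{Ind}_{C_3}^{S_3} \mathbf{1} &= \mathbf{1} \oplus \epsilon, \\
\mathrm{Ind}_{C_2}^{S_3} \mathbf{1} &= \mathbf{1} \oplus \rho, \\
\mathrm{Ind}_{\{e\}}^{S_3} \mathbf{1} &= \mathbf{1} \oplus \epsilon \oplus 2\rho.
\end{align*}

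Then I would verify the virtual character identity
\begin{equation*}
2\,\mathrm{Ind}_{C_2}^{S_3}\mathbf{1} \;\ominus\; 2\,\mathrm{Ind}_{S_3}^{S_3}\mathbf{1} \;=\; \mathrm{Ind}_{\{e\}}^{S_3}\mathbf{1} \;\ominus\; \mathrm{Ind}_{C_3}^{S_3}\mathbf{1},
\end{equation*}
since both sides equal $2\rho$. Taking inner products with $\chi_{\tilde K}$ and applying the formula $\mathrm{rk}(E(\tilde K^H)) = \langle \mathrm{Ind}_H^G \mathbf{1}_H, \chi_{\tilde K}\rangle$ recalled by the authors immediately converts this to
\begin{equation*}
2\,\mathrm{rk}(E(K)) - 2\,\mathrm{rk}(E(k)) = \mathrm{rk}(E(\tilde K)) - \mathrm{rk}(E(F)),
\end{equation*}
which, after dividing by $2$, is precisely the claim.

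There is essentially no obstacle here; the whole argument is a one-line representation-theoretic identity. The only mild subtlety to note in the write-up is that there are three conjugate cubic subfields of $\tilde K$, but they all have the same rank because they are Galois conjugates over $k$, so the choice of $C_2 \leq S_3$ used in $\mathrm{Ind}_{C_2}^{S_3}\mathbf{1}$ is harmless.
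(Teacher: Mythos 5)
Your proposal is correct and matches the paper's approach exactly: the paper cites the Dokchitser--Dokchitser character-theoretic framework in the preceding paragraphs and then simply asserts that ``a group theory computation delivers the following lemma,'' and your representation-theoretic identity $2\,\mathrm{Ind}_{C_2}^{S_3}\mathbf{1} \oplus \mathrm{Ind}_{C_3}^{S_3}\mathbf{1} = \mathrm{Ind}_{\{e\}}^{S_3}\mathbf{1} \oplus 2\,\mathrm{Ind}_{S_3}^{S_3}\mathbf{1}$ (both sides equal $3\mathbf{1}\oplus\epsilon\oplus 2\rho$) is precisely the omitted computation. The side remark about the three conjugate cubic subfields all having the same rank is a correct and worthwhile observation.
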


\begin{remark}
    In the previous lemma, $\tilde K/F$ is a $\mathbb{Z}/3\mathbb{Z}$-cubic extension, and $\mathrm{rk}(E(\tilde K))-\mathrm{rk}(E( F))$ is the rank of the 2-dimensional abelian variety obtained by taking the kernel of the Weil restriction map $\Res^{\tilde K}_F E \to E$. We will consider these and related varieties in the following sections.
\end{remark}

\subsection{Useful abelian varieties and their prime Selmer groups}
Let $E$ be an elliptic curve over $k$. In the spirit of \cite{MR07}, we construct a 4-dimensional abelian variety whose algebraic rank of $k$-rational points can be utilized to understand rank growth of $E$ with respect to an $S_3$-cubic $K/k$. As in previous sections, $K/k$  denotes an $S_3$-cubic extension with Galois closure $\tilde K/k$, which in turn has a unique quadratic subfield $F/k$, i.e. its quadratic resolvent. 

\begin{definition} \label{def:4-dimAV}
    Define a $4$-dimensional abelian variety over $k$ by
    \begin{equation}
        B_{K/k} := \text{Ker} \left(\text{N}_F^{\tilde{K}}: \text{Res}_k^{\tilde{K}} E \to \text{Res}_k^F E \right),
    \end{equation}
    where $\text{N}_F^{\tilde{K}}: \tilde K \to F$ is the field norm map.
\end{definition}
The variety $B_{K/k}$ depends on $F$. Also, note $B_{K/k}$ corresponds to the isotypic component of the 2-dimensional standard representation of $S_3$ of the 6-dimensional abelian variety $\text{Res}_k^{\tilde{K}} E$ over $k$. Hence, it is the case that $\mathrm{rk}(B_{K/k}(k)) \equiv 0 \text{ mod } 2$. 

For $F=k(\sqrt{\alpha})$, let $E_F$ denote the quadratic twist of $E/k$ by $\alpha$. Similar to the above, one obtains the relation
\begin{equation}
    \mathrm{rk}(E(\tilde{K})) = \mathrm{rk}(E(K)) + \mathrm{rk}(E_F(k)) + \mathrm{rk}(B_{K/k}(k)).
\end{equation}
Above, each summand on the right hand side corresponds to isotypic components given by the trivial representation, the 1-dimensional sign representation, and the 2-dimensional standard representation of $S_3$. 
Using Lemma \ref{lem:S3rankgrowth}, we obtain
\begin{equation}
    \mathrm{rk}(E(K)) - \mathrm{rk}(E(k)) = \frac{1}{2} \mathrm{rk}(B_{K/k}(k)).
\end{equation}

Pick an order $3$ element $\sigma_K \in \text{Gal}(\tilde{K}/F)$. Because $\text{Gal}(\tilde{K}/F)$ is a unique normal subgroup of $\text{Gal}(\tilde{K}/k)$, the element $\sigma_K$ is an endomorphism of $B_{K/k}$. In lieu of \cite[Proposition 4.1]{MR07}, we apply \cite[Proposition 4.2.4]{SWPthesis} to give the following description of $1-\sigma_K$ torsion subgroup of $B_{K/k}$.
\begin{proposition} \label{prop:torsion}
    There exists a \emph{$\text{Gal}(\overline{k}/k)$}-equivariant isomorphism
    \begin{equation*}
        B_{K/k}[1-\sigma_K] \cong \left( \emph{\text{Res}}_k^F E \right)[3] \cong \emph{\text{Ind}}_{\emph{\text{Gal}}(\overline{k}/F)}^{\emph{\text{Gal}}(\overline{k}/k)} E[3]
    \end{equation*}
\end{proposition}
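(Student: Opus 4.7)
The plan is to reduce to the analogous statement over $F$---namely \cite[Prop.~4.2.4]{SWPthesis} applied to the cyclic cubic extension $\tilde K/F$---and then to descend the identification back to $k$ by pushing it through the Weil restriction functor $\Res_k^F$.

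First, I would recognize $B_{K/k}$ as a Weil restriction from $F$. Since $k \subset F \subset \tilde K$, transitivity of Weil restriction gives a canonical identification $\Res_k^{\tilde K} E \cong \Res_k^F(\Res_F^{\tilde K} E)$, under which the norm $\mathrm{N}_F^{\tilde K}\colon \Res_k^{\tilde K} E \to \Res_k^F E$ corresponds to $\Res_k^F$ applied to the $F$-level norm $N_F \colon \Res_F^{\tilde K} E \to E$. Because $\Res_k^F$ is a right adjoint and therefore preserves kernels, setting $A_{K/F} := \ker N_F$ yields
\begin{equation*}
B_{K/k} \;\cong\; \Res_k^F A_{K/F},
\end{equation*}
a $4$-dimensional abelian variety over $k$, as expected from the dimension count ($\dim A_{K/F} = 2$ and $[F:k]=2$). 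One must also verify that the endomorphism $\sigma_K$ of $B_{K/k}$ inherited from the $\Gal(\tilde K/k)$-action on $\Res_k^{\tilde K} E$ agrees, under this identification, with $\Res_k^F$ applied to the $F$-endomorphism $\sigma_K$ of $A_{K/F}$ coming from the $\Gal(\tilde K/F)$-action on $\Res_F^{\tilde K} E$. This compatibility follows from the functoriality of Weil restriction on endomorphisms together with the containment $\sigma_K \in \Gal(\tilde K/F) \subseteq \Gal(\tilde K/k)$.

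Next, I would invoke the cited \cite[Prop.~4.2.4]{SWPthesis} (the analogue, in this setting, of \cite[Prop.~4.1]{MR07}): applied to the cyclic cubic extension $\tilde K/F$ and to $E$ viewed as an elliptic curve over $F$, it produces a $\Gal(\overline{k}/F)$-equivariant isomorphism $A_{K/F}[1-\sigma_K] \cong E[3]$. Applying $\Res_k^F$, which preserves kernels of endomorphisms and commutes with the formation of $[n]$-torsion, then delivers
\begin{equation*}
B_{K/k}[1-\sigma_K] \;\cong\; \Res_k^F\bigl(A_{K/F}[1-\sigma_K]\bigr) \;\cong\; \Res_k^F(E[3]) \;\cong\; (\Res_k^F E)[3]
\end{equation*}
as $\Gal(\overline{k}/k)$-modules. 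The final identification with $\mathrm{Ind}_{\Gal(\overline{k}/F)}^{\Gal(\overline{k}/k)} E[3]$ is then the standard description of the $\overline{k}$-points of a Weil restriction along $F/k$ as an induced Galois module.

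The hard part will be the bookkeeping in the first step: verifying that the two incarnations of $\sigma_K$ really do agree after the transitivity identification of Weil restrictions, and that the Galois-equivariance claims hold at each reduction. Once this compatibility is set up, the rest of the argument is formal and the substantive content is carried entirely by the cited proposition over $F$.
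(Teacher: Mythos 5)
Your proposal is correct, and it takes the route that the paper explicitly acknowledges in the first sentence of its proof—``The proposition can be deduced from taking Weil restriction $\mathrm{Res}_k^F$ on both sides of the Galois-equivariant isomorphism stated in [MR07, Proposition 4.1]''—but then declines to flesh out. The argument you give is precisely what is needed to make that sentence rigorous: transitivity of Weil restriction identifies $B_{K/k}$ with $\mathrm{Res}_k^F A_{\tilde K/F}$ (you call it $A_{K/F}$), exactness of $\mathrm{Res}_k^F$ as a right adjoint lets it pass through kernels and hence through $[1-\sigma_K]$-torsion, and the compatibility of the two incarnations of $\sigma_K$ under the transitivity isomorphism is the one nontrivial bookkeeping check, which you flag appropriately.

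Where your proof differs from the paper's written-out argument is that the paper opts for a self-contained, explicitly bimodule-theoretic construction instead: it realizes $B_{K/k}$ as a twist $B_E^\psi$ of $B_E := \mathrm{Hom}_{\mathbb{Z}}(\mathrm{Hom}_{\mathbb{Z}}(S(3),S(3)),E)$ by a cocycle $\psi$ factoring through $\mathrm{Gal}(\tilde K/k)\cong S_3$, then computes $B_E^\psi[1-\sigma_K]$ directly using left-exactness of $\mathrm{Hom}_{\mathbb{Z}}(\cdot,E)$ and the $2\times 2$ integral matrix representing $1-\sigma_K$, landing on $\mathrm{Hom}_{\mathbb{Z}}((\mathbb{Z}/3\mathbb{Z})^{\oplus 2},E)\cong(\mathrm{Res}_k^F E)[3]$. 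Your approach is shorter and pushes all the substantive content into the cited $F$-level result, which makes the functorial skeleton transparent; the paper's approach sacrifices brevity for self-containment and gives an explicit integral matrix model that also explains the factorization $[3]=(1-\sigma_K)^2$ noted in the following remark. Both are valid; the tradeoff is reliance on the external reference versus concreteness.
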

\begin{proof}
    The proposition can be deduced from taking Weil restriction $\text{Res}_k^F$ on both sides of the Galois-equivariant isomorphism stated in \cite[Proposition 4.1]{MR07}. To make the paper self contained, we also present a different strategy of the proof.\footnote{The authors thank Jordan Ellenberg for proposing a generalized version of this construction.} We first propose an equivalent construction of $B_{K/k}$. Consider the coordinate addition map $\mathbb{Z}^3 \to \mathbb{Z}$ by $(x_i)_{i=1}^3 \mapsto \sum_{i=1}^3 x_i$, and denote its kernel by $S(3)$. We then have an exact sequence 
    \begin{align}
        \begin{split}
            0 \to S(3) \to \mathbb{Z}^3 &\to \mathbb{Z} \to 0. 
        \end{split}
    \end{align}
    Given a fixed elliptic curve $E$ over a global field $k$, let $B_E$ be the 4-dimensional abelian variety over $k$ defined as
    \begin{equation}
        B_E := \text{Hom}_{\mathbb{Z}}\left(\text{Hom}_{\mathbb{Z}}(S(3),S(3)),E\right).
    \end{equation}
    Note that $B_E$ is an $S_3$-bimodule, and $\mathrm{End}_\mathbb{Z}(B_E) \supset \mathbb{Z}[S_3]$. Let $\psi: \text{Gal}(\overline{k}/k) \to \text{End}_\mathbb{Z}(B_E)$ be a morphism which factors as
    \begin{equation}
        \psi: \text{Gal}(\overline{k}/k) \to \text{Gal}(\tilde{K}/k) \cong S_3 \to \text{End}_{\mathbb{Z}}(B_E).
    \end{equation}
    By construction, the twist of $B_E$ by $\psi$, denoted as $B_E^\psi$, is isomorphic to $B_{K/k}$ as abelian varieties over $k$. Note that given any $\tau \in \text{Gal}(\overline{k}/k)$, the element $\psi(\tau)$ acts on $B_E$ by multiplying the elements of $B_E$ on the left via matrix multiplication.

    As a $2 \times 2$ matrix over $\mathbb{Z}$, the representations of $\sigma_K \in \text{Gal}(\tilde{K}/F)$ and $1-\sigma_K$ are given by
    \begin{equation}
        \sigma_K := \begin{bmatrix}
        -1 & -1 \\
        1 & 0
        \end{bmatrix}, \quad \text{and} \quad  1 - \sigma_K := \begin{bmatrix}
        2 & 1 \\
        -1 & 1
        \end{bmatrix}.
    \end{equation}
    The left exactness of the $\text{Hom}_{\mathbb{Z}}(\cdot, E)$ functor implies
    \begin{align}
        \begin{split}
            B_E^{\psi}[1-\sigma_K] &\cong \text{Hom}_{\mathbb{Z}} \left( \frac{\text{Hom}_{\mathbb{Z}}(S(3),S(3))}{(1-\sigma_K)\text{Hom}_{\mathbb{Z}}(S(3),S(3))}, E\right)
        \end{split}
    \end{align}
    as $\text{Gal}(\overline{K}/K)$- modules,
    where $\psi$ induces the action of $\text{Gal}(\overline{K}/K)$ by left multiplication with elements in $S_3 \cong \text{SL}_2(\mathbb{F}_2)$. But notice that 
    \begin{align}
        \begin{split}
            \frac{\text{Hom}_{\mathbb{Z}}(S(3),S(3))}{(1-\sigma_K)\text{Hom}_{\mathbb{Z}}(S(3),S(3))} &\cong (\mathbb{Z}/3\mathbb{Z})^{\oplus 2}.
        \end{split}
    \end{align}
    Here, the non-trivial order-3 element $\sigma_K \in \text{Gal}(\tilde{K}/k)$ acts trivially on the module $(\mathbb{Z}/3\mathbb{Z})^{\oplus 2}$, whereas $\tau_2 \in \text{Gal}(F/k)$ acts as an involution on the module $(\mathbb{Z}/3\mathbb{Z})^{\oplus 2}$ by swapping the coordinates. This implies that
    \begin{equation}
       \text{Hom}_{\mathbb{Z}} \left( \frac{\text{Hom}_{\mathbb{Z}}(S(3),S(3))}{(1-\sigma_K)\text{Hom}_{\mathbb{Z}}(S(3),S(3))}, E\right) \cong \text{Hom}_{\mathbb{Z}}((\mathbb{Z}/3\mathbb{Z})^{\oplus 2}, E) \cong (\text{Res}_k^F E)[3].
    \end{equation}
    Because given any $\tau \in \text{Gal}(\overline{k}/k)$, $\psi$ acts as an automorphism on $(\text{Res}_k^F E)[3]$, we obtain the desired $\text{Gal}(\overline{k}/k)$-equivariant isomorphism $B_{K/k}[1-\sigma_K] \cong (\text{Res}_k^F E)[3]$.
\end{proof}

\begin{remark}
    The multiplication by $3$ map $\times 3: B_{K/k} \to B_{K/k}$ can be factored as the composition $(1 - \sigma_K) \circ (1-\sigma_K)$. This can be seen from the fact that the minimal polynomial of $1-\sigma_K$ over $\mathbb{Z}$ is given by $\lambda^2 + 3(\lambda + 1) = 0$.
\end{remark}

Using the isogeny $1-\sigma_K: B_{K/k} \to B_{K/k}$, one can obtain the following short exact sequence of Galois cohomology groups over $k$:
\begin{equation*}
    \begin{tikzcd}
        0 \arrow[r] & \frac{B_{K/k}(k)}{(1-\sigma_K)B_{K/k}(k)} \arrow[r] \arrow[d] &H^1(k, B_{K/k}[1-\sigma_K]) \arrow[r] \arrow[d, "\prod_v \text{res}_v"] &H^1(k, B_{K/k})[1-\sigma_K] \arrow[r] \arrow[d] &0 \\
        0 \arrow[r] & \prod_{v} \frac{B_{K/k}(k_v)}{(1-\sigma_K)B_{K/k}(k_v)} \arrow[r, "\prod_v \delta_v"] & \prod_v H^1(k_v, B_{K/k}[1-\sigma_K]) \arrow[r]  & \prod_v H^1(k_v, B_{K/k})[1-\sigma_K] \arrow[r]  &0.
    \end{tikzcd}
\end{equation*}
As in Section \ref{sec:Selmer} and Definition \ref{def:selmergroup}, the $(1-\sigma_K)$-Selmer group of $B_{K/k}$ is given by
\begin{equation}
    \Sel_{1-\sigma_K}(B_{K/k}/k) := \{c \in H^1(k,B_{K/k}[1-\sigma_K]) \; : \; \prod_v \text{res}_v(c) \in \prod_v H^1_f(k_v, B_{K/k}[1-\sigma_K]) \}.
\end{equation}

Proposition \ref{prop:torsion} implies,  for any collections of $S_3$-cubics extensions $\{K/k\}$ with \textit{fixed}  quadratic resolvent field $F/K$, that
\begin{equation}
    \Sel_{1-\sigma_K}(B_{K/k}/k) \subset H^1(k, (\text{Res}_k^F E)[3]) \cong H^1(F, E[3]),
\end{equation}
where the isomorphism follows from Shapiro's lemma. A standard argument using Galois cohomology groups demonstrate that $\Sel_{1-\sigma_K}(B_{K/k}/k)$ is a finite dimensional $\mathbb{F}_3$-vector space. Furthermore, one has the relation
\begin{equation}
    \text{rk}(E(K)) - \text{rk}(E(k)) \leq \dim_{\mathbb{F}_3} \Sel_{1-\sigma_K}(B_{K/k}/k).
\end{equation}
This follows the following equation by $2$, which is obtainable from \cite[Proposition 6.3]{MR07}:
\begin{equation}
    \text{corank}_{\mathbb{Z}_3[\zeta_3]} \Sel_{3^\infty}(B_{K/k}/k) \leq \dim_{\mathbb{F}_3} \Sel_{1-\sigma_K}(B_{K/k}/k).
\end{equation}

\subsection{Local conditions defining the desired prime Selmer groups}

The local conditions that define $\Sel_{1-\sigma_K}(B_{K/k}/k)$ depend on the decomposition of prime ideals of $k$ over in quadratic resolvent field $F/k$. To identify the differences in such local conditions, we introduce the notion of coordinate-wise maximal isotropic subspaces of a direct sum of quadratic vector spaces over finite fields. The results stated here are for our particular $S_3$ extensions; a flexible, general version can be found in the PhD thesis of the second author \cite[Section 4.3]{SWPthesis}.

\begin{definition}
    Let $q: V \times V \to \mathbb{F}_p$ be a non-degenerate quadratic form on a finite dimensional $\mathbb{F}_p$- vector space $V$. For any $n \geq 1$, one obtains a non-degenerate quadratic form $q^{\oplus n}: V^{\oplus n} \times V^{\oplus n} \to \mathbb{F}_p^{\oplus n}$. For each $1 \leq i \leq n$, denote by $\pi_i: V^{\oplus n} \to V$ the projection morphism to the $i$-th coordinate. We say that a subspace $W \subset V^{\oplus n}$ is \textit{coordinate-wise Lagrangian} if the following two conditions hold for every $1 \leq i \leq n$.
    \begin{enumerate}
        \item The subspace $\pi_i(W) \subset V$ is a maximal isotropic subspace of $V$ with respect to the quadratic form $q$.
        \item The quadratic form $q$ over $V$ is trivial over $\pi_i(W)$.
    \end{enumerate}
\end{definition}

We identify the local conditions which defines $\Sel_{1-\sigma_3}(B_{K/k}/k)$ at primes $v$ of $k$ depends on the splitting behavior of $v$ with respect to the quadratic extension $F/k$. 

\begin{example}[Split primes over $F/k$] \label{ex:split}
Suppose the place $v$ of $k$ is unramified over $F/k$ and factorizes over $\mathcal{O}_F$ as $(v) = \mathfrak{p}_1 \mathfrak{p}_2$. Assume further that $v$ is not a place of bad reduction of $E$. Then $F_v \cong F_{\mathfrak{p}_1} \oplus 
F_{\mathfrak{p}_2} \cong k_v \oplus k_v$ and
\begin{equation}
    H^1(k_v, (\text{Res}_k^F E)[3]) \cong H^1(k_v, E[3])^{\oplus 2}.
\end{equation}
The Weil pairing $E[3] \times E[3] \to \mu_3$ induces a coordinate-wise symmetric pairing
\begin{equation}
    H^1(k_v, (\text{Res}_k^F E)[3]) \times H^1(k_v, (\text{Res}_k^F E)[3]) \to \mu_3^{\oplus 2}.
\end{equation}
Denote by $q_v := q_{\mathfrak{p}_1} \oplus q_{\mathfrak{p}_2}:H^1(k_v, (\text{Res}_k^F E)[3]) \to \mu_3^{\oplus 2}$ the quadratic form induced from the coordinate-wise symmetric pairing. If we base change the fixed elliptic curve $E$ to be defined over $F$, then we can define the 2-dimensional abelian variety over $F$, denoted as $A_{\tilde{K}/F}$, to be given by
\begin{equation}
    A_{\tilde{K}/F} := \text{Ker} \left(N_F^{\tilde{K}}: \text{Res}_F^{\tilde{K}} E \to E \right).
\end{equation}
Then it follows from \cite[Section 4]{MR07} and \cite[Section 4]{PR12} that both images of the morphisms
\begin{align}
\begin{split}
    \text{loc}_v &: H^1(k, \text{Res}_k^F E[3]) \cong H^1(F, E[3]) \to H^1(F_{\mathfrak{p}_1}, E[3]) \oplus H^1(F_{\mathfrak{p}_2}, E[3]) \cong H^1(k_v, E[3])^{\oplus 2} \\ \text{and} \quad
    \delta_v &: \frac{B_{K/k}(k_v)}{(1-\sigma_K)B_{K/k}(k_v)} \cong  \frac{A_{\tilde{K}/F}(F_{\mathfrak{p}_1})}{(1-\sigma_K)A_{\tilde{K}/F}(F_{\mathfrak{p}_1})} \oplus \frac{A_{\tilde{K}/F}(F_{\mathfrak{p}_2})}{(1-\sigma_K)A_{\tilde{K}/F}(F_{\mathfrak{p}_2})} \to H^1(k_v, E[3])^{\oplus 2}
    \end{split}
    \end{align}
are coordinate-wise Lagrangian subspaces of $H^1(K_v, (\text{Res}_K^M E)[3])$. That is, each coordinate is a maximal isotropic subspace $V$ of $H^1(K_v, E[3])$ such that $q_v|_V = 0$. The first map $\text{loc}_v$ is a direct product of two localization maps $\text{loc}_{\mathfrak{p}_1} \oplus \text{loc}_{\mathfrak{p}_2}$; the second map $\delta_v$ is a direct product of two local Kummer maps $\delta_{\mathfrak{p}_1} \oplus \delta_{\mathfrak{p}_2}$. Since $k_v \cong F_{\mathfrak{p}_1} \cong F_{\mathfrak{p}_2}$, it follows that the images of the local Kummer maps $\delta_{\mathfrak{p}_1}$ and $\delta_{\mathfrak{p}_2}$ have to be identical to each other. In particular, if $v$ is a place of good reduction of $B_{K/k}$, then the unramified cohomology group of $H^1(k_v, \text{Res}_k^F E[3])$ is given by
\begin{equation}
    H^1_f(k_v, B_{K/k}[1-\sigma_K]) = H^1_{ur}(k_v, \text{Res}_k^F E[3]) = H^1_{ur}(k_v, E[3])^{\oplus 2}.
\end{equation}
We note that \cite[Section 3]{KMR14} shows that the number of ramified coordinate-wise Lagrangian subspaces of $H^1(k_v,\text{Res}_k^F E[3])$ can be characterized as follows. Here, any Lagrangian subspace $V \subset H^1(k_v,\text{Res}_k^F E[3])$ is ramified if $V \neq H^1_{ur}(k_v, \text{Res}_k^F E[3])$. We hence have
\begin{align}
    \begin{split}
        \dim_{\mathbb{F}_3} E[3](k_v) = i \iff \# \{\text{Coordinate-wise Ramified Lagrangian Subspaces} \} = \lceil 3^{2(i-1)} \rceil.
    \end{split}
\end{align}
Among the ramified coordinate-wise Lagrangian subspaces of $H^1(k_v, E[3])^{\oplus 2}$, only $3^{i-1}$ of them can occur as the restricted cohomology group $H^1_f(k_v, B_{K/k}[1-\sigma_K])$.
\end{example}

\begin{example}[Inert or ramified primes over $F/k$] \label{ex:inert}
Now we suppose the place $v$ of $k$ is inert or ramified over $F/k$. Assume further that $v$ is not a place of bad reduction of $E$. Then $F_v$ is an unramified (or respectively ramified) quadratic extension of $k_v$. Shapiro's lemma implies that
    \begin{equation}
        H^1(k_v, (\text{Res}_k^F E)[3]) \cong H^1(F_v, E[3]),
    \end{equation}
    and the Weil pairing $E[3] \times E[3] \to \mu_3$ over $F_v$ (not over $k_v$) induces a symmetric pairing
    \begin{equation}
        H^1(k_v, \text{Res}_k^F E[3]) \times H^1(k_v, \text{Res}_k^F E[3]) \to \mu_3.
    \end{equation}
    As before, denote by $q_v: H^1(k_v, \text{Res}_k^F E[3])$ the quadratic form induced from the coordinate-wise symmetric pairing. By \cite[Section 4]{MR07} and \cite[Section 4]{PR12}, we obtain that both images of the morphisms
    \begin{align}
    \begin{split}
        \text{loc}_v &: H^1(k, \text{Res}_k^F E[3]) \cong H^1(F, E[3]) \to H^1(F_v, E[3]) \\
       \text{and} \quad \delta_v &: \frac{B_{K/k}(k_v)}{(1-\sigma_K)B_{K/k}(k_v)} \cong \frac{A_{\tilde{K}/F}(F_v)}{(1-\sigma_K)A_{\tilde{K}/F}(F_v)} \to H^1(F_v, E[3])
    \end{split}
    \end{align}
    are Lagrangian subspaces of $H^1(k_v, (\text{Res}_k^F E)[3])$. As shown in \cite[Section 3]{KMR14}, the number of ramified coordinate-wise Lagrangian subspaces of $H^1(k_v,\text{Res}_k^F E[3])$ can be characterized as follows:
    \begin{align}
        \begin{split}
            \dim_{\mathbb{F}_3} E[3](k_v) = i \iff \# \{\text{Coordinate-wise Ramified Lagrangian Subspaces} \} = \lceil 3^{i-1} \rceil.
        \end{split}
    \end{align}
    All of these subspaces arise from restricted cohomology groups $H^1_f(k_v, B_{K/k}[1-\sigma_K])$ for some $S_3$-cubic extension $K/k$ and its associated 4-dimensional abelian variety $B_{K/k}$.
\end{example}

\begin{remark} \label{remark:local-conditions-Kummer}
    From both examples, we observe that regardless of whether the place $v$ of $k$ is split, inert, or ramified over $F/k$, as long as $v$ is a place of good reduction of $E$ over $k$, we obtain
    \begin{align}
    \begin{split}
        & \dim_{\mathbb{F}_3} E[3](k_v) = i \iff \\
        & \# \{\text{Coordinate-wise Ramified Lagrangian Subspaces of form } H^1_f(k_v, B_{K/k}[1-\sigma_K]) \} = \lceil 3^{i-1} \rceil.
    \end{split}
    \end{align}
    The above equivalence relation can be formulated as follows.
    \begin{itemize}
        \item If $i = 0$, then $H^1(k_v, \text{Res}_k^F E[3])$ is trivial, and changes in local conditions at such places $v$ do not affect the cardinality of $\Sel_{1-\sigma_K}(B_{K/k}/k)$.
        \item If $i = 1$, then $H^1(k_v, \text{Res}_k^F E[3])$ has a unique ramified coordinate-wise Lagrangian subspace that is equal to $H^1_f(k_v, B_{K/k}[1-\sigma_K])$ for some $K/k$.
        \item If $i = 2$, then $H^1(k_v, \text{Res}_k^F E[3])$ has three ramified coordinate-wise Lagrangian subspaces that are equal to $H^1_f(k_v, B_{K/k}[1-\sigma_K])$ for some $K/k$, each of which arises from three ramified order-3 local characters $\omega: \text{Gal}(\overline{k_v}/k_v) \to \mathbb{Z}/3\mathbb{Z}$ over the local field $k_v$.
    \end{itemize}
\end{remark}

We also obtain an analogue of Poonen-Rains heuristics which states that the intersection of two maximal isotropic subspaces characterizes the dimension of $(1-\sigma_K)$-Selmer group of $B_{K/k}$.
\begin{proposition} \label{prop:random-matrix-model}
    Consider the following diagram:
    \begin{equation}
        \begin{tikzcd}
            & H^1(k, (\emph{\text{Res}}_k^F E)[3]) \arrow{d}{} \\
\prod_v \frac{B_{K/k}(k_v)}{(1-\sigma_K)B_{K/k}(k_v)} \arrow{r}{} & \prod_{v \text{ place of } k} H^1(k_v, (\emph{\text{Res}}_k^F E)[3]).
        \end{tikzcd}
    \end{equation}
    \begin{enumerate}
        \item For each place $v$ of $k$, the images of the horizontal and vertical maps are coordinate-wise Lagrangian subspaces term with respect to the quadratic form $q_v$.
        \item The intersection of the images of the horizontal and vertical maps are isomorphic to $\Sel_{1-\sigma_K}(B_{K/k} / k)$.
    \end{enumerate}
\end{proposition}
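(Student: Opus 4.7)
The plan is to prove the two assertions separately, with the coordinate-wise Lagrangian property coming from local and global Tate duality and the Selmer group identification coming from unwinding definitions.

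\textbf{Part (1): Lagrangian structure.} For the vertical map, the claim at unramified primes of good reduction for $E$ is already contained in Examples \ref{ex:split} and \ref{ex:inert}. The underlying mechanism at every place is the same: the Weil pairing $e_3 \colon E[3] \times E[3] \to \mu_3$, together with Shapiro's lemma and the decomposition $F \otimes_k k_v \cong \prod_{w \mid v} F_w$, produces a coordinate-wise perfect symmetric pairing $q_v$ on $H^1(k_v, (\mathrm{Res}_k^F E)[3]) \cong \bigoplus_{w \mid v} H^1(F_w, E[3])$. By local Tate duality applied to $E$ over each $F_w$, the image of each Kummer map $E(F_w)/3E(F_w) \hookrightarrow H^1(F_w, E[3])$ equals its own annihilator, hence is maximal isotropic. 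The remaining places (bad reduction, ramification in $F/k$, archimedean) are handled identically by the arguments of \cite[Section 4]{MR07} and \cite[Section 4]{PR12}, which factor the local Kummer map for $B_{K/k}[1-\sigma_K]$ through the Kummer maps for $E[3]$ at primes of $F$ above $v$. For the horizontal map, isotropy of the global image follows from the reciprocity law $\sum_v \mathrm{inv}_v(c \cup c') = 0$ for any two global classes $c, c'$. Maximality is a dimension count via Poitou--Tate: comparing the global Euler characteristic with the sum of local ones shows the image has the same dimension as a coordinate-wise Lagrangian subspace in the restricted product.

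\textbf{Part (2): Intersection equals the Selmer group.} Using Proposition \ref{prop:torsion} and Shapiro's lemma, identify $H^1(k, B_{K/k}[1-\sigma_K]) \cong H^1(k, (\mathrm{Res}_k^F E)[3])$. By Definition \ref{def:selmergroup}, $\Sel_{1-\sigma_K}(B_{K/k}/k)$ consists precisely of global classes whose localization at every $v$ lies in $H^1_f(k_v, B_{K/k}[1-\sigma_K]) = \mathrm{Image}(\delta_v)$. The image of the Selmer group in $\prod_v H^1(k_v, (\mathrm{Res}_k^F E)[3])$ under the global-to-local map thus lies in both the horizontal image (tautologically) and the vertical image (by the local Selmer condition), and hence in the intersection. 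Conversely, any tuple in the intersection lifts to a global class by being in the horizontal image, and that lift is a Selmer class by being in the vertical image. The resulting map from $\Sel_{1-\sigma_K}(B_{K/k}/k)$ to the intersection is injective because $\Sha^1(k, (\mathrm{Res}_k^F E)[3]) = 0$: our standing hypothesis that $\mathrm{Gal}(k(E[3])/k) \supset \mathrm{SL}_2(\mathbb{F}_3)$ renders $E[3]$ absolutely irreducible as a $G_k$-module, and a standard inflation-restriction together with Chebotarev argument kills $\Sha^1$.

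\textbf{Main obstacle.} The primary technical burden lies in Part (1): verifying that the local images are not merely isotropic but genuinely \emph{maximal} isotropic, and that this maximality respects the coordinate decomposition at split places of $F/k$. The three cases split, inert, ramified in $F/k$ yield structurally different shapes for $H^1(k_v, (\mathrm{Res}_k^F E)[3])$, so the dimension-count verification of the Lagrangian property must be carried out case by case. Once this bookkeeping is complete, the isotropy statements for both the local Kummer images and the global image are classical consequences of local and global Tate duality, and Part (2) is then a direct unwinding of the Selmer group definition.
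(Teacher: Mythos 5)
Your proposal follows essentially the same route as the paper: the local Kummer images are coordinate-wise Lagrangian by local Tate duality for $E[3]$ over each completion of $F$, and the global-to-local restriction image is Lagrangian by the reciprocity law for isotropy plus the nine-term Poitou--Tate sequence for maximality. This is precisely what the paper's citations to \cite[Proposition 4.4]{MR07} and \cite[Theorem 4.13]{PR12} encapsulate, and your expansion of them is sound. You also supply an argument for Part (2), which the paper's own proof leaves implicit.

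Two small corrections are in order. First, you have the labels backwards: in the diagram, the \emph{horizontal} arrow is the product of local Kummer maps $\prod_v \delta_v$ and the \emph{vertical} arrow is the global-to-local restriction $\prod_v \mathrm{res}_v$, whereas your text attributes the Kummer maps to the vertical arrow and reciprocity to the horizontal one. Second, your justification that $\Sha^1(k, (\mathrm{Res}_k^F E)[3])=0$ appeals to $E[3]$ being ``absolutely irreducible,'' but irreducibility alone does not kill $\Sha^1$; one also needs the vanishing of $H^1$ of the image of Galois. After Shapiro's lemma this reduces to $\Sha^1(F,E[3])$, and the relevant input is $H^1(\mathrm{Gal}(F(E[3])/F), E[3])=0$, which holds because $\mathrm{Gal}(F(E[3])/F)\supset \mathrm{SL}_2(\mathbb{F}_3)$ (using that $F$ and $k(E[3])$ are linearly disjoint over $k$) and $H^1(\mathrm{SL}_2(\mathbb{F}_3),\mathbb{F}_3^2)=0$ by the center $\{\pm I\}$ acting fixed-point-freely together with inflation-restriction. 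The conclusion is correct, but the reason should be this group-cohomological computation rather than irreducibility.
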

\begin{proof}
The fact that the image of the horizontal map is a maximal isotropic subspace of the lower-right term follows from \cite[Proposition 4.4]{MR07}. The fact that the vertical map is a maximal isotropic subspace can be obtained from adapting the proof of \cite[Theorem 4.13]{PR12}, where the result follows from the 9-term Poitou-Tate sequence.
\end{proof}

\section{Markov Chains and Ranks}\label{sec:markov-chains-ranks}

\subsection{Preliminary results}

Recall from the previous section that given a cyclic $\mathbb{Z}/3\mathbb{Z}$-extension $K/k$, one can define the $2$-dimensional abelian variety
\begin{equation*}
    A_{K/k} := \text{Ker}(N_k^K: \text{Res}_k^K E \to E)
\end{equation*}
and using the cyclic generator $\sigma_K \in \text{Gal}(K/k)$, the Selmer group $\Sel_{1-\sigma_K}(A_{K/k}/k)$ governs the rank growth of $E$ with respect to the extension $K/k$. The dimensions of these Selmer groups can be understood from constructing what are called the local Selmer structures. These are characterized as a collection of finite dimensional vector sub-spaces of $H^1(k, E[3])$ and are heavily utilized in \cite{MR07, KMR14}. The goal of this section is to introduce relevant notations and key results regarding local Selmer structures as outlined in \cite{KMR14}, which will serve as backbone for understanding the dimensions of $\Sel_{1-\sigma_K}(B_{K/k}/k)$ as $K/k$ varies over $S_3$-cubic extensions.

Before starting the next definition, we again note that $k$ is the base field, $F/k$ is a quadratic extension (and will be the quadratic resolvent of $S_3$-cubic extensions of $k$), and by assumption the elliptic curve $E$ is such that $k(E[3])$ and $F$ are linearly disjoint over $k$. 

\begin{definition} \label{def:DA_DB_DC}
    We define a set of square-free products of primes of $F$ corresponding to cases (A), (B), (C) with conditions on the primes as appearing in Remark \ref{rmk:allcases},
    \begin{itemize} 
    \item \textit{A set of primes for case (A).} $k$ is a number field such that $\zeta_3 \in k$, $F/k$ is a quadratic extension, $H_3$ is the subextension of the Hilbert class field of $F$ for which $\mathrm{Gal}(H_3/F) \simeq \mathrm{Cl}(F)^3$. Define \[   \mathcal{D}^A := \left\{\mathfrak{d} = \prod_{i=1}^n p_i\mathcal{O}_F : p_i~\text{a prime of $k$ such that}~ \mathrm{Frob}_{p_i}(FH_3/k)=1, \mathrm{Frob}_{p_i}(k(E[3])/k) \neq 1 \right\}. \]

    \item \textit{A set of primes for case (B)}. Here $k=\mathbb{Q}$, $F= \mathbb{Q}(\zeta_3)$, and so the Hilbert class field of $F$ is trivial.
 \[ \mathcal{D}^B := \left\{\mathfrak{d} = \prod_{i=1}^n p_i\mathcal{O}_F : p_i~\text{a prime of $k$ such that }  \mathrm{Frob}_{p_i}(k(E[3])/k) \neq 1 \right\}. \]

\item \textit{A set of primes for case (C)}. Here $k$ is any number field, $F/k$ is a quadratic extension for which $\zeta_3 \notin F$, $F_z = F(\zeta_3)$, and $H_3$ is the subextension of the Hilbert class field of $F_z$ for which $\mathrm{Gal}(H_3/F_z) \simeq \mathrm{Cl}(F_z)^3$. Define
  \[ \mathcal{D}^C := \left\{\mathfrak{d} = \prod_{i=1}^n p_i\mathcal{O}_F : p_i~\text{a prime of $k$ such that }  \mathrm{Frob}_{p_i}(k(E[3])/k) \neq 1 \text{ and } \mathrm{Frob}_{p_i}(H_3/k) = 1 \right\}. \]

\item Define $\mathcal{P}_i^k$ to be the set of places of $k$ such that $\dim_{\mathbb{F}_3} E[3](k_v) = i$.

\item Define $\mathcal{D}$ to be square-free products of primes supported on $\mathcal{P}_1^k \cup \mathcal{P}_2^k$.

\item We write $\mathcal{D}^*$ in the results that follow to indicate that any fixed choice of $\mathcal{D}, \mathcal{D}^A, \mathcal{D}^B$, or $\mathcal{D}^C$ may be taken in place of $\mathcal{D}^*$.
\end{itemize}
\end{definition}

\begin{remark}
    Results below are concerned with working out the local conditions of Selmer groups at places supported by elements of $\mathcal{D}^*$ and can be worked out for a choice $\mathcal{D}^*$ taken to be $\mathcal{D}, \mathcal{D}^A, \mathcal{D}^B$, or $\mathcal{D}^C$. Since we are concerned with the parameterizations of $S_3$-cubic extensions, we will only specialize to the cases that $\mathcal{D}^*$ is $\mathcal{D}^A, \mathcal{D}^B$, or $\mathcal{D}^C$. For the case $\mathcal{D}^* = \mathcal{D}$, see the final sections of \cite{KMR14}. 
\end{remark}

Next we define the sets of primes on which the ideals in $\mathcal{D}^*$ are supported.
\begin{definition}
    Fix a quadratic extension $F/k$. Define the following sets of primes,
    \[\mathcal{P}_i^{F,*} := \mathcal{P}^F_i \cap \mathcal{D}^*,\]
    i.e., the primes $\mathfrak{p}$ of $F$ in the support of elements of $\mathcal{D}^*$ for which the $\mathrm{Frob}_{\mathfrak{p}}$ on $E[3]$ has $3^{2-i}$ fixed points (the latter being equivalent to $\dim_{\mathbb{F}_3}E[3]/(\mathrm{Frob}_{\mathfrak{p}}-1) = i$).

    Further, define
     
    \[\mathcal{P}_i^{k,*} := \{ p = \mathfrak{p} \cap \mathcal{O}_k| \mathfrak{p} \in \mathcal{P}^F_i \cap \mathcal{D}^*\}\]
    to be the primes of $k$ below primes of $\mathcal{P}_i^{F,*}$.
\end{definition}
As with $\mathcal{D}^*$, we will express results below for the appropriate $\mathcal{P}_i^{k, *}$ which are applicable for $\mathcal{P}_i^{k, A}$, $\mathcal{P}_i^{k, B}$, $\mathcal{P}_i^{k, C}$.

\begin{definition}
    Let $v$ be a place of $k$, and let $E$ be a fixed elliptic curve over $k$. We introduce the following notations which are introduced in \cite{KMR14}.
    \begin{itemize}
        \item $\Sigma$: A finite set of places of $k$ containing places of bad reduction of $E$, places above $(3) \subset \mathbb{Z}$, all Archimedean places of $k$, and places dividing any one of the elements $u$ in $S_3(F_z)[T]$, the set of virtual 3-Selmer units of $F_z$. 
        \item $\mathfrak{d}$: A square-free product of places of $k$.
        \item $\Sigma(\mathfrak{d})$: A finite set of places of $k$ containing places dividing $\omega$ and places in $\Sigma$. 
        \item $\mathcal{C}(k)$: The set of characters
        \begin{equation}
            \mathcal{C}(k) := \text{Hom}(\text{Gal}(\overline{k}/k), \mu_3).
        \end{equation}
        \item $\mathcal{C}_{ram}(k)$: The set of ramified $\mu_3$-characters of $\text{Gal}(\overline{k}/k)$ in $\mathcal{C}(k)$.
        \item $k(E[3])$: The minimal Galois extension of $k$ over which all the $3$-torsion points of $E$ are rational. 
        \item $ k_{\mathfrak{d},\omega}$: The fixed field of $\bigcap_{c \in \Sel(E[3], \omega)_k} \text{Ker}(c: \text{Gal}(\overline{k}/k(E[3])) \to E[3])$, see \eqref{eq:Selmerfieldk} and \eqref{eq:SelmerfieldF}
        \item $\mathrm{Res}_{k(E[3])}$: The map given by the composition 
        $$H^1(k, E[3]) \to H^1(k(E[2]), E[3]) \overset{\simeq}{\to} \mathrm{Hom}(G_{k(E[3])}, E[2])^{\mathrm{Gal}(k(E[3])/k)}.$$
        \item $\Omega_\mathfrak{d}$: A set of Cartesian products of local characters
        \begin{equation}
            \Omega_\mathfrak{d} := \prod_{v \in \Sigma} \mathcal{C}(k_v) \times \prod_{v \mid \mathfrak{d}} \mathcal{C}_{ram}(k_v).
        \end{equation}
        \item $\Omega_\mathfrak{d}^{S}$: Given a subset $S \subset \prod_{v \in \Sigma} \mathcal{C}(k_v)$, it is a subset of Cartesian products of local characters
        \begin{equation}
            \Omega_\mathfrak{d}^S := S \times \prod_{v \mid \mathfrak{d}} \mathcal{C}_{ram}(k_v) \subset \Omega_\mathfrak{d}.
        \end{equation}
        \item $i_{\mathfrak{d}}: \Omega_\mathfrak{d} \to \prod_{v \text{ place of } k} \mathcal{C}(k_v)$: The inclusion map of characters such that $\omega_v$ is unramified if $v \not\in \Sigma(\mathfrak{d})$.
        \item $\eta_{\mathfrak{d},v}: \Omega_{\mathfrak{d}v} \to \Omega_{\mathfrak{d}}$: The projection map, if $\mathfrak{d}v \in \mathcal{D}$.
        \item $\omega_v$: A local character in $\mathcal{C}(k_v)$.
        \item $A_{K/k}^{\omega_v}$: A 2-dimensional abelian variety over $k_v$ obtained from twisting $A_{K/k}$ by a local character $\omega_v \in \mathcal{C}(k_v)$.
        \item $\mathcal{H}_{\omega_v}$: The restricted cohomology group obtained from the image of the local Kummer map induced from twisting by the local character $\omega_v \in \mathcal{C}(k_v)$:
        \begin{equation}
            \mathcal{H}_{\omega_v} := \text{Im} \left( \frac{A_{K/k}^{\omega_v}(k_v)}{(1-\sigma_K)A_{K/k}^{\omega_v}(k_v)} \to H^1(k_v, E[3]) \right).
        \end{equation}
        It is not hard to show that $\mathcal{H}_{\omega_v}$ is a Lagrangian subspace of $H^1(k_v, E[3])$, and that there exists a cyclic $\mathbb{Z}/3\mathbb{Z}$ extension $K/k$ such that $\mathcal{H}_{\omega_v} = H^1_f(k_v, A_{K/k}[1-\sigma_K])$. In particular if $v \in \Sigma(w)$, then $\mathcal{H}_{\omega_v}$ is coordinate-wise ramified.
        \item $\mathcal{H}_{\omega}$: A Cartesian product of restricted cohomology groups $\mathcal{H}_{\omega} := (\mathcal{H}_{\omega_v})_{v \text{ place of } k}$ obtained from $\omega \in i_{\mathfrak{d}}\left(\Omega_{\mathfrak{d}}\right)$.
    \end{itemize}
\end{definition}

\begin{definition}[Local Selmer structure]
\begin{enumerate}
    \item[]
    \item Let $\mathcal{H} := (\mathcal{H}_v)_{v \text{ place of } k}$ be a Cartesian product of Lagrangian subspaces $\mathcal{H}_v \subset H^1(k_v, E[3])$. The local Selmer structure of $E[3]$ associated to $\mathcal{H}$ is defined as
    \begin{equation}
        \Sel(E[3], \mathcal{H})_k := \{c \in H^1(k, E[3]) \; : \; \prod_v \text{res}_v(c) \in \prod_v \mathcal{H}_v\}.
    \end{equation}
    \item Given a choice of a square-free product $\mathfrak{d} \in \mathcal{D}$ and a Cartesian product of local characters $\omega \in i_{\mathfrak{d}}(\Omega_\mathfrak{d})$, the local Selmer group of $E[3]$ associated to $\omega$ is defined as
    \begin{equation}
        \Sel(E[3], \omega)_k := \Sel(E[3], \mathcal{H}_\omega)_k = \{c \in H^1(k, E[3]) \; : \; \prod_v \text{res}_v(c) \in \prod_v \mathcal{H}_{\omega_v}\}.
    \end{equation}
    \item Given a character $\omega \in i_{\mathfrak{d}}(\Omega_\mathfrak{d})$, we denote by $\text{rk}(\omega)$ the dimension
    \begin{equation}
        \text{rk}(\omega) := \dim_{\mathbb{F}_3} \Sel(E[3], \omega)_k.
    \end{equation}
\end{enumerate}
\end{definition}

\begin{definition}
    Suppose that $\mathfrak{d} \in \mathcal{D}$ and $\omega \in \Omega_{\mathfrak{d}}$. If $v \in \mathcal{P}_1 \cup \mathcal{P}_2$ and $v \nmid \mathfrak{d}$, we denote by
    \begin{equation}
        t_\omega(v) := \dim_{\mathbb{F}_3} \text{Im} \left( \text{loc}_v: \Sel(E[3], \omega)_k \to H^1_{ur}(k_v, E[3]) \right).
    \end{equation}
    where $\text{loc}_v$ denotes the localization map with respect to the place $v$.
\end{definition}

The differences between the dimensions of local Selmer structures constructed from two Cartesian product of Lagrangian subspaces $\mathcal{H}$ and $\mathcal{H}'$ that only differ at a single place $v$ can be obtained from the following proposition.
\begin{proposition}[Proposition 7.2 of \cite{KMR14}]
    Suppose that $\mathfrak{d} \in \mathcal{D}$, $\omega \in \Omega_{\delta}$, and $v \in \mathcal{P}_1 \cup \mathcal{P}_2$ such that $v \nmid \mathfrak{d}$. Let $\omega' \in \eta_{\mathfrak{d},v}^{-1}(\omega) \subset \Omega_{\mathfrak{d}v}$. 
    \begin{enumerate}
        \item $|\eta_{\mathfrak{d},v}^{-1}(\omega)| = 6$
        \item Suppose that $v \in \mathcal{P}_1$. Then $0 \leq t(v) \leq 1$ and
        \begin{equation}
            \emph{\text{rk}}(\omega') = \begin{cases}
            \emph{\text{rk}}(\omega) - 1 & \text{ if } t_\omega(v) = 1 \\
            \emph{\text{rk}}(\omega) + 1 & \text{ if } t_\omega(v) = 0.
            \end{cases}
        \end{equation}
        \item Suppose that $v \in \mathcal{P}_2$. Then $0 \leq t(v) \leq 2$ and
        \begin{equation}
            \emph{\text{rk}}(\omega') = \begin{cases}
                \emph{\text{rk}}(\omega) - 2 & \text{ if } t_\omega(v) = 2 \\
                \emph{\text{rk}}(\omega) & \text{ if } t_\omega(v) = 1 \\
                \emph{\text{rk}}(\omega) + 2 & \text{ if } t_\omega(v) = 0  \text{ for exactly 2 of } \omega' \in \eta_{\mathfrak{d},v}^{-1}(\omega) \\
                \emph{\text{rk}}(\omega) & \text{ if } t_\omega(v) = 0  \text{ for the other 4 of } \omega' \in \eta_{\mathfrak{d},v}^{-1}(\omega).
            \end{cases}
        \end{equation}
    \end{enumerate}
\end{proposition}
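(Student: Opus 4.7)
The proposition is a direct analogue of \cite[Proposition 7.2]{KMR14}; I will follow their argument after checking that our hypotheses give the same local inputs. First I note that $\text{Gal}(k(E[3])/k) \supset \text{SL}_2(\mathbb{F}_3)$ forces $\mu_3 \subset k_v$ for every $v \in \mathcal{P}_1 \cup \mathcal{P}_2$: for $v \in \mathcal{P}_2$ this is the Weil pairing, and for $v \in \mathcal{P}_1$ it follows because a non-identity element of $\text{SL}_2(\mathbb{F}_3)$ with a one-dimensional fixed space must be unipotent, hence of determinant $1$, so $q_v \equiv 1 \pmod 3$. Part (1) is then immediate: the map $\eta_{\mathfrak{d},v}$ is projection away from the $v$-component, so its fibers are copies of $\mathcal{C}_{\text{ram}}(k_v)$, and Kummer theory gives $|\mathcal{C}(k_v)| = 9$, $|\mathcal{C}_{\text{ur}}(k_v)| = 3$, hence $|\mathcal{C}_{\text{ram}}(k_v)| = 6$.

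For parts (2) and (3), my plan is to compare $\Sel(E[3], \omega)_k$ and $\Sel(E[3], \omega')_k$ by a local-to-global argument at $v$. Set $V_v := H^1(k_v, E[3])$, a quadratic space under the symmetric local pairing obtained from cup product with the Weil pairing. Let $L_v := \mathcal{H}_{\omega_v} = H^1_{\text{ur}}(k_v, E[3])$ (since $v \nmid \mathfrak{d}$) and $L'_v := \mathcal{H}_{\omega'_v}$, both Lagrangian in $V_v$. Define strict and relaxed Selmer groups $\Sel^{\text{str}}_v$ and $\Sel^{\text{rel}}_v$ obtained by replacing the local condition at $v$ by $0$ and by all of $V_v$ respectively, and set $W := \text{loc}_v(\Sel^{\text{rel}}_v) \subset V_v$. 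Poitou--Tate self-duality, combined with the fact that the local conditions at all other places are Lagrangian, implies $W$ is a Lagrangian subspace of $V_v$, and a short diagram chase produces
\begin{equation*}
\text{rk}(\omega') - \text{rk}(\omega) = \dim_{\mathbb{F}_3}(W \cap L'_v) - \dim_{\mathbb{F}_3}(W \cap L_v) = \dim_{\mathbb{F}_3}(W \cap L'_v) - t_\omega(v).
\end{equation*}
Everything thus reduces to computing $\dim_{\mathbb{F}_3}(W \cap L'_v)$ as $W$ varies through Lagrangians in $V_v$.

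The remaining analysis is linear algebra on Lagrangians in the quadratic space $V_v$. When $v \in \mathcal{P}_1$, $V_v$ is split $2$-dimensional and admits exactly two Lagrangian lines, $L_v$ and a single ramified Lagrangian equal to $L'_v$ for every one of the six ramified $\omega'$; the two possibilities $t_\omega(v) \in \{0,1\}$ force $W \in \{L_v, L'_v\}$, immediately yielding (2). When $v \in \mathcal{P}_2$, $V_v$ is split $4$-dimensional and the six ramified $\omega'$ produce only three distinct Kummer Lagrangians (since $\omega'$ and $(\omega')^2$ give the same $\mathcal{H}_{\omega'_v}$). The key geometric input — that each of these three ramified Kummer Lagrangians is transverse to $L_v$ — is a local Kummer computation using the identification $A_{K/k}[1-\sigma_K] \cong E[3]$ from Proposition \ref{prop:torsion}. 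Case analysis on $t_\omega(v) \in \{0,1,2\}$, together with the observation (in the $t_\omega(v) = 0$ branch) that $W$ either coincides with exactly one of the three Kummer Lagrangians — giving $W = L'_v$ for precisely two of the six $\omega'$ — or is transverse to all three (the other four $\omega'$), reproduces the four outcomes listed in (3). The hardest part is this final bookkeeping in the $\mathcal{P}_2$ case, particularly the transversality of $L_v$ with the ramified Kummer Lagrangians and the ``$2$ versus $4$'' split; both are handled cleanly in \cite[Sections~3, 7]{KMR14} and transfer to our setting verbatim once Proposition \ref{prop:torsion} is in hand.
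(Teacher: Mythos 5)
The paper does not prove this proposition: it is imported verbatim from \cite{KMR14}, so there is no in-paper proof to compare against. Your sketch reconstructs the KMR14 argument (local-to-global reduction via Poitou--Tate self-duality, then a Lagrangian-intersection computation in the local quadratic space $V_v$), and the skeleton --- the identity $\text{rk}(\omega') - \text{rk}(\omega) = \dim(W \cap L'_v) - t_\omega(v)$ with $W$ Lagrangian, plus the ruling geometry in $V_v$ --- is the right one. Two points want tightening.

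First, your argument that $\mu_3 \subset k_v$ for $v \in \mathcal{P}_1$ is circular. You write that a non-identity element of $\mathrm{SL}_2(\mathbb{F}_3)$ with a one-dimensional fixed space is unipotent, hence of determinant $1$, hence $q_v \equiv 1 \pmod 3$. But \emph{every} element of $\mathrm{SL}_2$ has determinant $1$, and ``$\mathrm{Frob}_v \in \mathrm{SL}_2(\mathbb{F}_3)$'' is \emph{equivalent} to ``$q_v \equiv 1 \pmod 3$'', which is the thing you are trying to prove. Since the hypothesis only gives $\mathrm{Gal}(k(E[3])/k) \supset \mathrm{SL}_2(\mathbb{F}_3)$, $\mathrm{Frob}_v$ may have determinant $-1$; then its eigenvalues are $1, -1$, the fixed space is still one-dimensional, $v \in \mathcal{P}_1^k$, yet $\mu_3 \not\subset k_v$ and $\mathcal{C}_{\mathrm{ram}}(k_v) = \varnothing$. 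The correct way to say it: for $v \in \mathcal{P}_1$, the eigenvalues of $\mathrm{Frob}_v$ on $E[3]$ are $1$ and $q_v \bmod 3$; if $q_v \not\equiv 1$ there are no ramified $\mu_3$-characters at $v$ and the fiber $\eta_{\mathfrak{d},v}^{-1}(\omega)$ is empty, so the statement is vacuous; the content is in the unipotent case $q_v \equiv 1$, where $\mu_3 \subset k_v$ and part (1) follows. (For $v \in \mathcal{P}_2$ your Weil-pairing argument is fine.)

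Second, the bookkeeping in the $\mathcal{P}_2$, $t_\omega(v) = 0$ branch is phrased as a dichotomy (``$W$ either coincides with exactly one \ldots or is transverse to all three'') when it should be a simultaneous statement about the six $\omega'$. The correct geometry: the Lagrangians of the split $4$-dimensional $V_v$ over $\mathbb{F}_3$ split into two rulings of $q+1 = 4$ each, Lagrangians in the same ruling are pairwise transverse, and $L_v$ together with the three ramified Kummer Lagrangians exhaust one ruling. So when $W$ is transverse to $L_v$ it is \emph{forced} to equal one of the three ramified $L'_v$, and is then automatically transverse to the other two; this gives the $+2$ outcome for the two $\omega'$ matching that $L'_v$ and $0$ for the remaining four, not an either/or. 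Finally, a small attribution slip: the identification $A_{K/k}[1-\sigma_K] \cong E[3]$ you invoke is \cite[Prop.\ 4.1]{MR07}, not the paper's Proposition \ref{prop:torsion}, which concerns the four-dimensional $B_{K/k}[1-\sigma_K] \cong (\mathrm{Res}_k^F E)[3]$.
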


The above proposition demonstrates that differences in dimensions of local Selmer structures can be obtained from understanding differences in local conditions as $t(v)$ ranges over the set of places $v$ where the local conditions differ. The probability that $t(v)$ is equal to $0, 1, 2$ can be obtained as a Chebotarev density theoretic statement using the following two propositions.
\begin{proposition}[Proposition 9.3 of \cite{KMR14}]\label{prop:KMR9.3}
Suppose $E$ is an elliptic curve over $k$ such that $\emph{\text{Gal}}(k(E[3])/k) \supset \emph{\text{SL}}_2(\mathbb{F}_3)$. Denote by $k_{\mathfrak{d},\omega}$ be the Galois extension of $k$ defined as
\begin{equation}\label{eq:orig_Selmerfieldk}
    k_{\mathfrak{d},\omega} = \emph{\text{Fixed field of }} \bigcap_{c \in \Sel(E[3], \omega)_k} \emph{\text{Ker}}(c: \emph{\text{Gal}}(\overline{k}/k(E[3])) \to E[3]).
\end{equation}
where we identify elements of $\Sel(E[3],\omega)_k$ to lie in $H^1(k(E[3]), E[3])$ via the inflation-restriction sequence of Galois cohomology groups. Then for every $\mathfrak{d} \in \mathcal{D}$ and $\omega \in \Omega_{\mathfrak{d}}$:
\begin{enumerate}
    \item There exists a $\emph{\text{Gal}}(k(E[3])/k)$ module isomorphism $\emph{\text{Gal}}(k_{\mathfrak{d},\omega}/k(E[3])) \cong E[3]^{\emph{\text{rk}}(\omega)}$.
    \item The map $\Sel(E[3],\omega)_k \to \emph{\text{Hom}}(\emph{\text{Gal}}(\overline{k}/k(E[3])), E[3])$ induces isomorphisms
    \begin{align}
        \begin{split}
            \Sel(E[3], \omega)_k &\cong \emph{\text{Hom}}(\emph{\text{Gal}}(k_{\mathfrak{d},\omega}/k(E[3])), E[3])^{\emph{\text{Gal}}(k(E[3])/k)}, \\
            \emph{\text{Gal}}(k_{\mathfrak{d},\omega}/k(E[3])) &\cong \emph{\text{Hom}}(\Sel(E[3],\omega)_k, E[3]).
        \end{split}
    \end{align}
    \item The extension $k_{\mathfrak{d},\omega}/k$ is unramified away from places dividing $\Sigma(\mathfrak{d})$.
\end{enumerate}
\end{proposition}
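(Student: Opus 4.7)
The plan is to combine inflation-restriction with the irreducibility of $E[3]$ as an $\SL_2(\mathbb{F}_3)$-module (via Schur's lemma) in order to identify $\Sel(E[3],\omega)_k$ with a space of $\Gal(k(E[3])/k)$-equivariant homomorphisms $G_{k(E[3])} \to E[3]$ that factor through $\Gal(k_{\mathfrak{d},\omega}/k(E[3]))$, and then translate this into the claimed Galois-module structure on $\Gal(k_{\mathfrak{d},\omega}/k(E[3]))$ via an evaluation map. Part (3) then falls out from standard local-global ramification bookkeeping.

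First, I would write down the inflation-restriction sequence
\[
0 \to H^1(\Gal(k(E[3])/k), E[3]) \to H^1(k, E[3]) \to \Hom(G_{k(E[3])}, E[3])^{\Gal(k(E[3])/k)}
\]
and observe that the leftmost term vanishes by Sah's lemma: the element $-I \in \SL_2(\mathbb{F}_3) \triangleleft \mathrm{GL}_2(\mathbb{F}_3)$ is central in $\Gal(k(E[3])/k)$ and acts as multiplication by $-1$ on $E[3] \cong \mathbb{F}_3^2$, while $(-I) - 1 = -2$ is invertible in $\mathbb{F}_3$. This identifies each $c \in \Sel(E[3], \omega)_k$ with a $\Gal(k(E[3])/k)$-equivariant homomorphism $G_{k(E[3])} \to E[3]$ that, by the definition of $k_{\mathfrak{d},\omega}$, factors through $\Gal(k_{\mathfrak{d},\omega}/k(E[3]))$.

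For part (1) and the second isomorphism of part (2), set $n = \mathrm{rk}(\omega)$, pick a basis $c_1, \ldots, c_n$ of $\Sel(E[3], \omega)_k$, and form the $\Gal(k(E[3])/k)$-equivariant evaluation map
\[
\mathrm{ev}: \Gal(k_{\mathfrak{d},\omega}/k(E[3])) \hookrightarrow E[3]^n, \qquad \sigma \mapsto (c_i(\sigma))_{i=1}^n,
\]
which is injective by construction. Schur's lemma applied to the $\SL_2(\mathbb{F}_3)$-action gives $\mathrm{End}_{\Gal(k(E[3])/k)}(E[3]) = \mathbb{F}_3$, so every $\Gal(k(E[3])/k)$-submodule of $E[3]^n \cong E[3] \otimes_{\mathbb{F}_3} \mathbb{F}_3^n$ has the form $E[3] \otimes W$ for some $W \subseteq \mathbb{F}_3^n$. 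If $\mathrm{Image}(\mathrm{ev})$ were proper, a nonzero linear functional on $\mathbb{F}_3^n$ vanishing on $W$ would produce a nontrivial linear relation $\sum a_i c_i = 0$, contradicting the basis choice. Thus $\mathrm{ev}$ is surjective, giving (1); $\mathbb{F}_3$-dualizing yields $\Gal(k_{\mathfrak{d},\omega}/k(E[3])) \cong \Hom(\Sel(E[3],\omega)_k, E[3])$.

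The first isomorphism in (2) then follows from a dimension count: the natural injection $\Sel(E[3],\omega)_k \hookrightarrow \Hom(\Gal(k_{\mathfrak{d},\omega}/k(E[3])), E[3])^{\Gal(k(E[3])/k)}$ lands in a space of $\mathbb{F}_3$-dimension $\dim_{\mathbb{F}_3}\mathrm{End}_{\Gal(k(E[3])/k)}(E[3])^n = n$ by (1) together with Schur, matching $\dim_{\mathbb{F}_3}\Sel(E[3],\omega)_k$ and hence is an isomorphism. Finally, for (3), each $c \in \Sel(E[3], \omega)_k$ is locally unramified at every $v \notin \Sigma(\mathfrak{d})$, because its local image lies in $\mathcal{H}_{\omega_v} \subseteq H^1_{ur}(k_v, E[3])$; hence $c|_{I_v} = 0$, so the extension of $k(E[3])$ cut out by $\ker c$ is unramified above $v$. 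Combined with $k(E[3])/k$ itself being unramified outside $\Sigma \subseteq \Sigma(\mathfrak{d})$, we conclude $k_{\mathfrak{d},\omega}/k$ is unramified outside $\Sigma(\mathfrak{d})$. The main technical point to get right is the $H^1$-vanishing via Sah's lemma (the one place where the $\SL_2(\mathbb{F}_3)$ hypothesis is decisively used); after that, the rest is formal manipulation of Schur's lemma and routine ramification bookkeeping.
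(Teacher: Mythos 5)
Your proof is correct and follows essentially the standard argument in \cite{KMR14} (the paper cites Proposition~9.3 of \cite{KMR14} without reproving it): inflation-restriction becomes injective because $H^1(\mathrm{Gal}(k(E[3])/k), E[3])$ vanishes via Sah's lemma applied to the central element $-I \in \mathrm{SL}_2(\mathbb{F}_3)$, the evaluation map together with Schur's lemma for the absolutely irreducible module $E[3]$ yields parts (1) and (2), and part (3) follows from the local unramified conditions at places outside $\Sigma(\mathfrak{d})$. No gaps.
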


Before we can state some result on Markov operators governing variations of Selmer ranks, we need to first give an effective Chebotarev density theorem from \cite{KMR14} which can be used in giving rates of convergence for Markov operators given later in this section and also understanding the frequency with which ``random'' Selmer structures have $\mathbb{F}_3$-dimensions of various size after localization. 

\begin{theorem}[An Effective Chebotarev Density Theorem, Theorem 8.1 of \cite{KMR14}]\label{thrm:KMR8.1:effectivecheb}

    There is a nondecreasing function $\mathcal{L}:[1, \infty) \to [1, \infty)$ such that for 
    \begin{itemize}
        \item every $Y\geq 1$, 
        \item every $\mathfrak{d} \in \mathcal{D}$ with $\textbf{N}\mathfrak{d} < Y$, 
        \item every Galois extension $L/k$ that is abelian of exponent $p$ over $k(E[3])$ and unramified outside of $\Sigma(\mathfrak{d})$, 
        \item every pair of subset $S,S' \subset \mathrm{Gal}(L/K)$ stable under conjugation with $S$ non-empty, and 
        \item every $X > \mathcal{L}(Y)$,
        \end{itemize}
        we have 
        \begin{equation}
            \left| 
            \frac{
            \left|\left\{
            \mathfrak{q} \notin \Sigma(\mathfrak{d}) : \textbf{N}\mathfrak{q} \leq X, \mathrm{Frob}_{\mathfrak{q}}(L/k) \in S'
            \right\} \right|
            }{
            \left|\left\{ \mathfrak{q} \notin \Sigma(\mathfrak{d}) : \textbf{N}\mathfrak{q} \leq X, \mathrm{Frob}_{\mathfrak{q}}(L/k) \in S\right\} \right|
            } 
            - \frac{|S'|}{|S|}\right| \leq \frac{1}{Y}.
        \end{equation}
        Further, $\{\mathfrak{q} \notin \Sigma(\mathfrak{d}) : \textbf{N}\mathfrak{q} \leq X, \mathrm{Frob}_{\mathfrak{q}}(L/k) \in S\}$ is nonempty. 
\end{theorem}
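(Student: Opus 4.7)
The plan is to deduce the statement from a standard unconditional effective Chebotarev density theorem (for instance the Lagarias--Odlyzko estimate) by tracking how the arithmetic invariants of $L/k$ depend on $Y$. Applied to the Galois extension $L/k$ with any conjugation-stable subset $T \subset \Gal(L/k)$, such a theorem gives
\begin{equation*}
    \#\{\mathfrak{q} \notin \Sigma(\mathfrak{d}) : \textbf{N}\mathfrak{q} \leq X,\ \Frob_{\mathfrak{q}}(L/k) \in T\} = \frac{|T|}{[L:k]} \operatorname{Li}(X) + E(X, L),
\end{equation*}
with an error $E(X,L)$ controlled in terms of $X$, $[L:k]$, and the discriminant $d_L$. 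Taking the ratio of this count for $T=S'$ over the one for $T=S$, the main terms deliver $|S'|/|S|$, and it suffices to force the relative error to be at most $1/Y$ uniformly in the data.

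The key step is to bound $[L:k]$ and $\log|d_L|$ by an explicit function of $Y$. Since $L/k(E[3])$ is abelian of exponent $p$ and unramified outside primes dividing $\Sigma(\mathfrak{d})$, class field theory realizes $\Gal(L/k(E[3]))$ as a quotient of a ray class group of $k(E[3])$ with conductor supported on primes above $\Sigma(\mathfrak{d})$, modulo $p$-th powers. The $\mathbb{F}_p$-rank of such a quotient is at most $C_1 \cdot |\Sigma(\mathfrak{d})| + C_2$, where $C_1, C_2$ depend only on $k(E[3])$, $\Sigma$, and $p$. Combined with $\textbf{N}\mathfrak{d} < Y$, which forces $\omega(\mathfrak{d}) \leq \log_2 Y$, this yields $[L:k] \leq p^{C_3 \log Y + C_4}$. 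The conductor--discriminant formula, using that ramification exponents above $p$ and at archimedean places are bounded in terms of $k$ and $p$, then gives $\log|d_L| \ll \log Y$ with implicit constants depending only on $k, E, p, \Sigma$.

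Plugging these uniform bounds into Lagarias--Odlyzko, one obtains a function $\tilde{\mathcal{L}}$ depending only on $(k, E, p, \Sigma)$ such that for $X > \tilde{\mathcal{L}}(Y)$ the error term in each count is at most $\tfrac{1}{2Y}$ of its main term; replacing $\tilde{\mathcal{L}}$ by its monotone envelope $\mathcal{L}(Y) := \sup_{Y' \leq Y} \tilde{\mathcal{L}}(Y')$ produces the required nondecreasing function. Dividing the two counts then gives that the claimed ratio lies within $1/Y$ of $|S'|/|S|$. Applying the same main-term-dominates inequality with $T = S$ shows the denominator is strictly positive, hence the set $\{\mathfrak{q} \notin \Sigma(\mathfrak{d}) : \textbf{N}\mathfrak{q} \leq X,\ \Frob_{\mathfrak{q}}(L/k) \in S\}$ is non-empty whenever $S$ is, as required.

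The main obstacle is achieving uniformity in $L$ and $\mathfrak{d}$: a naive application of effective Chebotarev to a fixed $L$ produces no single $\mathcal{L}$. The content is that $L$ is controlled by its degree and discriminant, both of which grow at most polynomially in $\textbf{N}\mathfrak{d}$. It is precisely the class field theoretic bound on the $p$-rank of the relevant ray class quotient, together with the hypothesis $\textbf{N}\mathfrak{d} < Y$, that converts the Chebotarev error bound into one depending only on $Y$.
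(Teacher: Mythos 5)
The paper does not actually prove this statement; it is imported verbatim as Theorem 8.1 of \cite{KMR14}, so there is no in-house argument to compare against. Judged on its own, your strategy---effective Chebotarev of Lagarias--Odlyzko type, combined with class-field-theoretic bounds on $[L:k]$ and $\log|d_L|$ in terms of $Y$---is the natural route to an explicit $\mathcal{L}$, and the reduction of $[L:k]$ to the $p$-rank of a ray class group, plus $\omega(\mathfrak{d}) \ll \log Y$ and the conductor--discriminant formula, is the right way to obtain uniformity over the data. Two points, however, deserve attention.

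First, a genuine gap: the unconditional Lagarias--Odlyzko estimate carries a possible exceptional (Siegel) zero term of size $\mathrm{Li}(X^{\beta_1})$, where $\beta_1$ is a real zero of $\zeta_L$ near $1$. Your step ``the error term in each count is at most $\tfrac{1}{2Y}$ of its main term for $X > \tilde{\mathcal{L}}(Y)$'' requires, in particular, $X^{\beta_1 - 1} \leq 1/(2Y)$, i.e.\ $X \geq (2Y)^{1/(1-\beta_1)}$. Without a lower bound on $1 - \beta_1$ in terms of $|d_L|$ (Stark's theorem gives one, with a constant that is only poly-exponential in $|d_L|^{1/n_L}$) this threshold is not a function of $Y$ alone. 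The argument is repairable by invoking Stark's effective zero-free region, or by accepting an ineffective $\mathcal{L}$, but as written the Siegel-zero contribution is silently dropped. A secondary, minor quantifier slip: a relative error of $\tfrac{1}{2Y}$ in each count only gives the ratio within about $\tfrac{|S'|}{|S|}\cdot \tfrac{1}{Y}$ of its target, whereas the theorem asks for an absolute deviation $\leq \tfrac{1}{Y}$; since $|S'|/|S|$ can be as large as $|\mathrm{Gal}(L/k)|$, you need the relative error to be $\ll 1/(Y[L:k])$, which costs only another (bounded) factor but should be said.

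Second, since the theorem asserts only the existence of a nondecreasing $\mathcal{L}$ (not an explicit or effectively computable one), there is a much softer argument that sidesteps Lagarias--Odlyzko and the Siegel-zero issue entirely. For each fixed $Y$ the data $(\mathfrak{d}, L, S, S')$ range over a finite set: there are finitely many ideals of norm $< Y$, finitely many abelian exponent-$p$ extensions of $k(E[3])$ unramified outside any given $\Sigma(\mathfrak{d})$ (Hermite--Minkowski), and finitely many conjugation-stable subsets of each finite Galois group. For each fixed configuration the classical Chebotarev density theorem forces the ratio to converge to $|S'|/|S|$, and the counting set to be eventually nonempty, as $X \to \infty$; taking the maximum threshold over the finitely many configurations defines $\mathcal{L}(Y)$, and this function is automatically nondecreasing since enlarging $Y$ both enlarges the constraint set and tightens the tolerance. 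This is almost certainly the content behind the existence claim, and it shows that the heavier machinery in your proposal is optional for the statement as phrased; the value of effective Chebotarev would be in making $\mathcal{L}$ explicit, and at that point the exceptional-zero subtlety becomes unavoidable.
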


\begin{proposition}[Proposition 9.4 of \cite{KMR14}]
    Fix $\mathfrak{d} \in \mathcal{D}$ and $\omega \in \Omega_{\mathfrak{d}}$. For every $\mathfrak{q} \notin \Sigma(\mathfrak{d})$, define 
    \begin{equation}\label{eq:def:Tq}
        t_\omega(\mathfrak{q}) = \dim_{\mathbb{F}_3}\left( \mathrm{loc}_{\mathfrak{q}} \mathrm{Sel}(E[3], \omega)_k\right). 
    \end{equation}
    Let $c_{i,j}$ be given by the entries in the following table: 
    \begin{center}
        \begin{tabular}{c||c|c|c}
             & $j=0$ & $j=1$ & $j=2$\\ \hline \hline
            $i=1$ & $3^{-\mathrm{rk}(\omega)}$ & $1-3^{-\mathrm{rk}(\omega)}$& 0 \\
            $i=2$ & $3^{-2\mathrm{rk}(\omega)}$ & $4(3^{-\mathrm{rk}(\omega)} - 3^{-2\mathrm{rk}(\omega)})$ &  $1-4 \cdot 3^{-\mathrm{rk}(\omega)} + 3^{1-2\mathrm{rk}(\omega)}$
        \end{tabular}
    \end{center}
        Then for $i=2$ and $j=0,1,2$, 
        \begin{equation}\label{eqn:9.4prob}
        \lim_{X \to \infty} \frac{\#\{\mathfrak{q} \in \mathcal{P}_i(X) \mid \mathfrak{q} \nmid \mathfrak{d}, t(\mathfrak{q})=j\}}{\#\{\mathfrak{q} \in \mathcal{P}_i(X) \mid \mathfrak{q} \nmid \mathfrak{d}\}} = c_{i,j};
        \end{equation}
        further, if $\mathcal{L}$ is a function satisfying Theorem \ref{thrm:KMR8.1:effectivecheb}, then for every $Y > {\bf N}\mathfrak{d}$ and every $X > \mathcal{L}(Y)$ we have 
        \begin{equation}\label{eqn:9.4convergence}
            \left| \frac{\#\{\mathfrak{q} \in \mathcal{P}_i(X) \mid \mathfrak{q} \nmid \mathfrak{d}, t(\mathfrak{q})=j\}}{\#\{\mathfrak{q} \in \mathcal{P}_i(X) \mid \mathfrak{q} \nmid \mathfrak{d}\}} - c_{i,j} \right| \leq \frac{1}{Y}.
        \end{equation}
        Finally, if it also the case $3 \mid [K(E[3]) : K]$, then \eqref{eqn:9.4prob} and \eqref{eqn:9.4convergence} are also true for $i=1$ and $j=0,1,2$.
\end{proposition}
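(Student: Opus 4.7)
The plan is to invoke Proposition \ref{prop:KMR9.3} to reinterpret $\mathrm{Sel}(E[3],\omega)_k$ and its localizations as arithmetic data inside the Galois extension $k_{\mathfrak{d},\omega}/k$, then to apply Chebotarev (via Theorem \ref{thrm:KMR8.1:effectivecheb}) to this extension. Write $r = \mathrm{rk}(\omega)$. By Proposition \ref{prop:KMR9.3}(1)--(2), $\mathrm{Gal}(k_{\mathfrak{d},\omega}/k(E[3])) \cong E[3]^r$ as $\mathrm{Gal}(k(E[3])/k)$-modules; since $E[3]$ is absolutely irreducible under the hypothesis $\mathrm{Gal}(k(E[3])/k) \supset \mathrm{SL}_2(\mathbb{F}_3)$, Schur's lemma identifies $\mathrm{Sel}(E[3],\omega)_k \cong \mathrm{Hom}_{\mathrm{Gal}(k(E[3])/k)}(E[3]^r, E[3]) \cong \mathbb{F}_3^r$, where $c \leftrightarrow (\lambda_1,\ldots,\lambda_r)$ acts on $(n_1,\ldots,n_r)\in E[3]^r$ by $c(n) = \sum_i \lambda_i n_i$.

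For $\mathfrak{q} \notin \Sigma(\mathfrak{d})$, every Selmer class is unramified at $\mathfrak{q}$, so $\mathrm{loc}_\mathfrak{q}(c) \in H^1_{ur}(k_v, E[3]) \cong E[3]/(\mathrm{Frob}_\mathfrak{q}-1)E[3]$ is represented by $c(\widetilde{\mathrm{Frob}}_\mathfrak{q}) \bmod (\mathrm{Frob}_\mathfrak{q}-1)E[3]$ for any Frobenius lift $\widetilde{\mathrm{Frob}}_\mathfrak{q} \in \mathrm{Gal}(k_{\mathfrak{d},\omega}/k)$. When $\mathfrak{q} \in \mathcal{P}_2$, the lift lies in $E[3]^r$; writing its image as $v = (v_1,\ldots,v_r)$, one finds $\mathrm{loc}_\mathfrak{q}(c) = \sum_i \lambda_i v_i$, so $t_\omega(\mathfrak{q}) = \dim_{\mathbb{F}_3}\mathrm{span}(v_1,\ldots,v_r)$. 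Theorem \ref{thrm:KMR8.1:effectivecheb} applied to $k_{\mathfrak{d},\omega}/k$, which is unramified outside $\Sigma(\mathfrak{d})$ by Proposition \ref{prop:KMR9.3}(3), gives uniform distribution of $v$ over $E[3]^r$ with effective rate $1/Y$; elementary counting of $r$-tuples in $\mathbb{F}_3^2$ whose span has prescribed dimension then recovers $c_{2,j}$: one tuple for $j=0$, $4(3^r-1)$ for $j=1$, and $3^{2r} - 4\cdot 3^r + 3$ for $j=2$.

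For $\mathfrak{q} \in \mathcal{P}_1$, the Frobenius projects to a non-trivial $g \in \mathrm{Gal}(k(E[3])/k)$ with $\dim E[3]^g = 1$. Choosing a section $\widetilde g$ and writing $\mathrm{Frob}_\mathfrak{q} = \widetilde g \cdot n$ with $n \in E[3]^r$, the cocycle relation gives
\begin{equation*}
\mathrm{loc}_\mathfrak{q}(c) \equiv c(\widetilde g) + c(n) \pmod{(g-1)E[3]}.
\end{equation*}
The map $n \mapsto \bigl(c \mapsto c(n)\bmod (g-1)E[3]\bigr)$ is the componentwise reduction $E[3]^r \to \bigl(E[3]/(g-1)E[3]\bigr)^r$, whose kernel $((g-1)E[3])^r$ has size $3^r$. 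Thus ``$\mathrm{loc}_\mathfrak{q}|_{\mathrm{Sel}} = 0$'' cuts out a single coset of size $3^r$ inside the fiber $E[3]^r$ over $g$. Since $g$ is unipotent of order $3$ one checks $(g^{-1}-1)E[3] = (g-1)E[3]$, so conjugation by $E[3]^r$ shifts $n$ exactly within this kernel; the coset is therefore conjugation-invariant and accounts for fraction $3^{-r}$ of the Frobenius conjugacy classes above $g$. Applying Theorem \ref{thrm:KMR8.1:effectivecheb} and summing over conjugates of $g$ yields $P(t_\omega(\mathfrak{q}) = 0 \mid \mathfrak{q} \in \mathcal{P}_1) = 3^{-r}$, matching $c_{1,0}$; the hypothesis $3 \mid [k(E[3]):k]$ merely ensures such $g$ exist so that $\mathcal{P}_1$ has positive density.

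The effective bound \eqref{eqn:9.4convergence} in both cases is immediate from Theorem \ref{thrm:KMR8.1:effectivecheb} applied to $L = k_{\mathfrak{d},\omega}$, whose required properties (Galois over $k$, abelian of exponent $3$ over $k(E[3])$, unramified outside $\Sigma(\mathfrak{d})$) are furnished by Proposition \ref{prop:KMR9.3}. The principal obstacle is the bookkeeping in case $i = 1$: verifying that the cocycle-valued identity above descends cleanly to conjugacy classes in $\mathrm{Gal}(k_{\mathfrak{d},\omega}/k)$ independently of the choices of section $\widetilde g$ and cocycle representative, and that the linear-algebra count correctly identifies exactly one orbit under conjugation.
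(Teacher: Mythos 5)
The paper does not actually give a proof of this proposition; it states it as a direct citation of Proposition 9.4 in \cite{KMR14} and uses it as a black box. So there is no internal proof here for your argument to be compared against. That said, your proposal is a reasonable reconstruction of the type of Chebotarev/linear-algebra argument that appears in \cite{KMR14}, and the main counts are correct: for $i=2$ the Frobenius lift lands in $E[3]^r$, and the count of $r$-tuples in $\mathbb{F}_3^2$ by span-dimension ($1$, $4(3^r-1)$, $3^{2r}-4\cdot3^r+3$) does reproduce $c_{2,0},c_{2,1},c_{2,2}$; for $i=1$ your coset computation giving fraction $3^{-r}$ reproduces $c_{1,0}$, and $c_{1,1}$ then follows since the target $E[3]/(g-1)E[3]$ is one-dimensional so $c_{1,2}=0$.

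Two small points deserve correction. First, your claim that ``$g$ is unipotent of order $3$'' is not true in general: in $\mathrm{GL}_2(\mathbb{F}_3)$ an element with $\dim_{\mathbb{F}_3}E[3]^g=1$ can also have order $2$ (eigenvalues $1,-1$). However the conclusion you want, $(g^{-1}-1)E[3]=(g-1)E[3]$, holds for every invertible $g$ because $g^{-1}-1=-g^{-1}(g-1)$, so the unipotence hypothesis is not needed. Second, the conjugacy-invariance check you flag as the ``principal obstacle'' is essentially automatic: $t_\omega(\mathfrak{q})$ is defined as the dimension of the image of a localization map that depends only on the decomposition group at $\mathfrak{q}$, which is determined up to conjugacy, so the locus $\{t_\omega=j\}$ in $\mathrm{Gal}(k_{\mathfrak{d},\omega}/k)$ is a union of conjugacy classes by construction. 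Your explicit verification that conjugation by $E[3]^r$ preserves the coset is consistent with this but not the right way to close the argument, since conjugation by lifts of non-identity elements of $\mathrm{Gal}(k(E[3])/k)$ must also be accounted for — the clean way is to observe that the condition itself is class-function data before ever choosing a section $\widetilde g$. With those two corrections, the blind reconstruction tracks what one would expect the proof in \cite{KMR14} to look like.
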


\begin{proposition}[Proposition 9.5 of \cite{KMR14}]
For every subset $S \subset \Omega_1$, the $\mod 3$ Lagrangian Markov operator $M_L = [m_{r,s}]$, where 
$$m_{r,s} = \begin{cases}
    1 - 3^{-r} \quad \text{if}~ s=r-1 \geq 0 \\
    3^{-r} \quad \text{if}~ s=r+1 \geq 1 \\
    0  \quad \text{otherwise}, 
\end{cases}$$
on the space 
$$\left\{
\text{maps}~ f: \mathbb{Z}_{\geq 0} \to \mathbb{R} ~\Big|~ ||f||:= \sum_{n \in \mathbb{Z}_{\geq 0}} |f(n)| ~ \text{converges} 
\right\}$$
governs the rank data $\Omega^S$ on $\mathcal{D}$ in that 
\begin{equation}
m^{(i)}_{\mathrm{rk}(\omega),s} = \frac{\sum_{q \in \mathcal{P}_i(X)-\delta}\left| \left\{ \chi \in \eta^{-1}_{\delta,q}(\omega) : \mathrm{rk}(\chi)=s\right\}\right|}{\sum_{q \in \mathcal{P}_i(X)-\delta}\left| \left\{ \eta^{-1}_{\delta,q}(\omega)\right\}\right|} 
\end{equation}
where $M^i= [m_{r,s}^{(i)}]$.

Moreover, every function $\mathcal{L}$ satisfying Theorem \ref{thrm:KMR8.1:effectivecheb} is a convergence rate for $(\Omega^S, M_L)$.
\end{proposition}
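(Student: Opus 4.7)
The plan is to combine Proposition 7.2 (the single-prime rank-change dichotomy) with Proposition 9.4 (the Chebotarev density of the invariant $t_\omega(q)$), and observe that in the limit these produce exactly the entries of $M_L$. Fix $\delta \in \mathcal{D}$ and $\omega \in \Omega_\delta$ with $r = \mathrm{rk}(\omega)$. By Proposition 7.2(1) each admissible prime $q$ contributes $|\eta_{\delta,q}^{-1}(\omega)| = 6$ characters to both the numerator and denominator of the empirical transition. By Proposition 7.2(2), when $q \in \mathcal{P}_1$, \emph{every} $\chi \in \eta_{\delta,q}^{-1}(\omega)$ satisfies $\mathrm{rk}(\chi) = r - 1$ if $t_\omega(q) = 1$ and $\mathrm{rk}(\chi) = r + 1$ if $t_\omega(q) = 0$. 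This uniformity across the six lifts is the essential mechanism: the factor of $6$ cancels and
\begin{equation*}
    m^{(1)}_{r, r+1} = \frac{\#\{q \in \mathcal{P}_1(X) - \delta : t_\omega(q) = 0\}}{\#(\mathcal{P}_1(X) - \delta)}, \qquad m^{(1)}_{r, r-1} = \frac{\#\{q \in \mathcal{P}_1(X) - \delta : t_\omega(q) = 1\}}{\#(\mathcal{P}_1(X) - \delta)},
\end{equation*}
with $m^{(1)}_{r, s} = 0$ for all other $s$.

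Next I would invoke Proposition 9.4: because $\Gal(k(E[3])/k) \supset \SL_2(\F_3)$ has order divisible by $3$, the final clause of that proposition applies in the case $i = 1$, giving the limits $c_{1,0} = 3^{-r}$ and $c_{1,1} = 1 - 3^{-r}$. These match the entries $m_{r, r+1} = 3^{-r}$ and $m_{r, r-1} = 1 - 3^{-r}$ of $M_L$ exactly, so $M^1 = M_L$ in the limit. The effective bound of $1/Y$ in Proposition 9.4 transfers directly to $|m^{(1)}_{r, s} - m_{r, s}| \leq 1/Y$ for every $Y > \mathbf{N}\delta$ and every $X > \mathcal{L}(Y)$, establishing the convergence rate. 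Since $M_L$ is row-stochastic it defines a contraction on the $\ell^1$-space of summable functions $f : \mathbb{Z}_{\geq 0} \to \mathbb{R}$; iterating across the primes dividing $\mathfrak{d}$ passes the pointwise bound to the operator level and gives the sense in which $M_L$ governs the rank data on $\Omega^S$.

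The only genuinely non-cosmetic input is the uniformity clause in Proposition 7.2(2): without the fact that all six elements of $\eta_{\delta,q}^{-1}(\omega)$ induce the \emph{same} rank change for $q \in \mathcal{P}_1$, the weighted character count in the numerator would not reduce to a plain prime count and Chebotarev could not be invoked directly. For the analogous statement at $\mathcal{P}_i$-primes with $i \geq 2$ (if one wishes to interpret $M^i$ as $M_L^i$ at the level of two-step transitions), one runs the same argument using part (3) of Proposition 7.2 together with the $i = 2$ row of the table in Proposition 9.4, tracking the fact that only $2$ of the $6$ lifts realize the $r \to r + 2$ transition when $t_\omega(q) = 0$; a short bookkeeping calculation then confirms $M^2 = M_L^2$. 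The convergence rate is inherited through Proposition 9.4 with no loss in either case.
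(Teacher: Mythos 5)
The paper does not give a proof of this proposition at all: it is quoted verbatim as Proposition~9.5 of \cite{KMR14} and used as a black box, so there is no internal argument to compare your proposal against. What you have written is, in effect, a reconstruction of the argument in \cite{KMR14}, and it is essentially correct.

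The two load-bearing observations in your sketch are the right ones. First, the uniformity of the rank change across the six lifts: for $q \in \mathcal{P}_1$, Proposition~7.2(2) shows $\mathrm{rk}(\omega')$ depends only on $t_\omega(q)$, so the weighted character count in the numerator of the empirical transition collapses to a plain prime count and the factor of $6$ cancels against $\bigl|\eta_{\delta,q}^{-1}(\omega)\bigr| = 6$ in the denominator. Second, invoking the final clause of Proposition~9.4 (valid here because $3 \mid |\mathrm{SL}_2(\mathbb{F}_3)| \mid [k(E[3]):k]$) turns that prime count into the densities $c_{1,0} = 3^{-r}$, $c_{1,1} = 1 - 3^{-r}$, which are literally the entries of $M_L$, and the $1/Y$ effective bound transfers verbatim. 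Your bookkeeping for $i = 2$ is also right: using the $2/6$ versus $4/6$ split from Proposition~7.2(3), one checks
\begin{equation*}
m^{(2)}_{r,r+2} = \tfrac{1}{3}c_{2,0} = 3^{-2r-1}, \qquad
m^{(2)}_{r,r} = c_{2,1} + \tfrac{2}{3}c_{2,0} = 4\cdot 3^{-r} - \tfrac{10}{3}\cdot 3^{-2r}, \qquad
m^{(2)}_{r,r-2} = c_{2,2},
\end{equation*}
which indeed agree with the entries of $M_L^2$.

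Two small caveats. The precise meaning of ``governs'' in \cite{KMR14} is a formal definition about empirical rank distributions over fans, and your closing sentence about ``iterating across the primes dividing $\mathfrak{d}$'' gestures at that but is really the content of the later Theorem~11.6 (the paper's Theorem~\ref{thm:our-11.6}), not of Proposition~9.5 itself; Proposition~9.5 is only the single-prime transition statement plus its effective error bound, which your argument nails. Also, the claim that row-stochasticity of $M_L$ gives an $\ell^1$-contraction does nothing on its own to establish the convergence-rate clause; the rate comes entirely from the effective Chebotarev estimate in Proposition~9.4, which you already have. Neither remark affects the correctness of the main mechanism you identified.
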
 

\subsection{Controlling Localizations via Corestrictions}
For a number field $L$ and an elliptic curve $E/L$, write $\mathcal{P}_{j}^L$ be the primes $p$ of $L$ for which $\dim_{\mathbb{F}_3}H^1_{ur}(L_p, E[3]) = j$. We first prove the following results that hold for any set of square-free elements $\mathcal{D}$ and set of primes $\mathcal{P}_j^k$. As such, we do not yet make the choice of $\mathcal{D}^*$ and the according set of primes $\mathcal{P}_j^{k,*}$. 
\begin{lemma}\label{lem:corestrictions}
    Suppose that $\mathfrak{d} \in \mathcal{D}$ and $\omega \in \Omega_{\mathfrak{d}}$.
    Let $p \in \mathcal{P}_{j}^k$, $j=0,1,2$, be a prime of $k$ that splits completely in $F$ as $p\mathcal{O}_F = \mathfrak{p}_0\mathfrak{p}_1$. Then for any $c \in \mathrm{Sel}(E[3],\omega)_F$, whose image under the localization map $\mathrm{loc}_p$ is identified in $H^1_{ur}(k_p, E[3]) \simeq \mathbb{F}_3$ or $\mathbb{F}_3^2$,  we have 
    \begin{equation}\label{eq:localizationcores}
        \mathrm{loc}_{\mathfrak{p}_0}(c) + \mathrm{loc}_{\mathfrak{p}_1}(c) \equiv \mathrm{loc}_p \mathrm{cor}_{F/k}(c) \pmod{3},
    \end{equation}
    as vectors in $\mathbb{F}_3^j$.
\end{lemma}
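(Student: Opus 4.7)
The plan is to invoke the standard compatibility between corestriction and localization in Galois cohomology: for any finite separable extension $F/k$ of number fields and any prime $p$ of $k$, one has
\begin{equation*}
\mathrm{loc}_p \circ \mathrm{cor}_{F/k} \;=\; \sum_{\mathfrak{p} \mid p} \mathrm{cor}_{F_{\mathfrak{p}}/k_p} \circ \mathrm{loc}_{\mathfrak{p}},
\end{equation*}
viewed as maps $H^1(F, E[3]) \to H^1(k_p, E[3])$. This follows from the semi-local decomposition $F \otimes_k k_p \cong \prod_{\mathfrak{p} \mid p} F_{\mathfrak{p}}$ together with Shapiro's lemma applied to $\mathrm{Ind}_{G_F}^{G_k} E[3]$: the global corestriction is the sum over coset representatives of $G_F \backslash G_k$, and after restricting to the decomposition subgroup at $p$ these cosets split up precisely according to the primes above $p$.

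Given this, I would proceed as follows. First, record the compatibility above and verify the hypotheses apply in our setting, namely that $c \in \mathrm{Sel}(E[3], \omega)_F$ is an honest class in $H^1(F, E[3])$ on which $\mathrm{loc}_{\mathfrak{p}_i}$ and $\mathrm{cor}_{F/k}$ are defined. Second, exploit the hypothesis that $p$ splits completely in $F$: the canonical embeddings $F \hookrightarrow F_{\mathfrak{p}_i}$ identify each $F_{\mathfrak{p}_i}$ with $k_p$ as a topological field over $k_p$, so under this identification the local corestrictions $\mathrm{cor}_{F_{\mathfrak{p}_i}/k_p}$ reduce to the identity on $H^1(k_p, E[3])$. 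The compatibility then collapses to
\begin{equation*}
\mathrm{loc}_p(\mathrm{cor}_{F/k}(c)) \;=\; \mathrm{loc}_{\mathfrak{p}_0}(c) + \mathrm{loc}_{\mathfrak{p}_1}(c)
\end{equation*}
in $H^1(k_p, E[3])$.

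Finally, I would restrict the identity to the unramified subgroup. Since $p$ is split (hence unramified) in $F$ and we may assume $p \notin \Sigma(\mathfrak{d})$, both $\mathrm{loc}_{\mathfrak{p}_i}(c)$ and $\mathrm{loc}_p(\mathrm{cor}_{F/k}(c))$ land in $H^1_{ur}(k_p, E[3])$, which is identified with $E[3]/(\mathrm{Frob}_p - 1)E[3] \cong \mathbb{F}_3^j$ via evaluation at Frobenius. This identification is additive, so reducing modulo $3$ gives the claimed equality as vectors in $\mathbb{F}_3^j$.

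The main obstacle is entirely bookkeeping: confirming that the textbook formula $\mathrm{loc}_p \circ \mathrm{cor}_{F/k} = \sum_{\mathfrak{p} \mid p} \mathrm{cor}_{F_\mathfrak{p}/k_p} \circ \mathrm{loc}_\mathfrak{p}$ holds with the indexing over primes above $p$ one expects, and that when $p$ splits completely the local corestrictions really do collapse to the identity rather than to multiplication by a residue or ramification degree. Once this compatibility is in hand, the rest of the argument is formal.
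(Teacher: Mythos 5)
Your argument is correct and is essentially the route the paper takes: the paper encodes the same compatibility $\mathrm{loc}_p \circ \mathrm{cor}_{F/k} = \sum_{\mathfrak{p}\mid p} \mathrm{cor}_{F_{\mathfrak{p}}/k_p} \circ \mathrm{loc}_{\mathfrak{p}}$ as a commutative diagram and, exactly as you do, uses the fact that $p$ splits completely to identify $F_{\mathfrak{p}_0}=F_{\mathfrak{p}_1}=k_p$ so the local corestrictions become the identity and the sum $\mathrm{loc}_{\mathfrak{p}_0}(c)+\mathrm{loc}_{\mathfrak{p}_1}(c)$ drops out.
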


\begin{proof}
Since $p$ splits completely in $F$, we have $k_p = F_{\mathfrak{p}_0}=F_{\mathfrak{p}_1}$, and so 
\[H^1_{ur}(k_p, E[3]) = H^1_{ur}(F_{\mathfrak{p}_0}, E[3]) = H^1_{ur}(F_{\mathfrak{p}_1}, E[3]).\]
Recall the three local cohomology groups above are the images of the appropriate Selmer group in $H^1(k_p, E[3])$. For a prime $p \in \mathcal{P}_{j}^F$ with $j=1$ or $j=2$, we have the following commutative diagram.

\begin{tikzcd}
\mathrm{Sel}(E[3],\omega)_F \arrow[d, "\mathrm{cor}_{F/k}"'] \arrow[rr, "\mathrm{loc}_{\mathfrak{p_1}} \oplus \mathrm{loc}_{\mathfrak{p}_1}"] &  & {H^1_{ur}(F_{\mathfrak{p}_0}, E[2]) \oplus H^1_{ur}(F_{\mathfrak{p}_1}, E[3])} \arrow[r, "\simeq"] \arrow[d, "\mathrm{cor}_{F/k}c_{\mathfrak{p}_0} + \mathrm{cor}_{F/k}c_{\mathfrak{p}_1}"] & {H_{ur}^1(k_p, E[3])^{\oplus 2}} \arrow[r, "\simeq"] & \mathbb{F}_3^{2j} \arrow[ld, "v_0 + v_1"] \\
\mathrm{cor}_{F/k} \left(\mathrm{Sel}(E[3],\omega)_F\right) \arrow[rr, "\mathrm{loc}_p"]                                                                                        &  & {H_{ur}^1(k_p, E[3])} \arrow[r, "\simeq"]                                                                                                                                            & \mathbb{F}_3^j                                      &                                       
\end{tikzcd}

Note that the leftmost corestriction map $\mathrm{cores}_{F/k}$ is an isomorphism. This is because one has $\mathrm{cores}_{F/k} \circ \mathrm{res}_{F/k}: H^1(F,E[3]) \to H^1(F,E[3])$ is a multiplication by $2$ map, and $\mathrm{Sel}(E[3],\omega)_F$ is an $\mathbb{F}_3$-vector space.
\end{proof}

\begin{remark}
    The utility of \eqref{eq:localizationcores} is the following: we will use the methods of \cite{KMR14} to understand the distribution of primes $p \in \mathcal{P}_{j}^k$ of $k$ for which $\mathrm{loc}_p \mathrm{cor}_{F/k}(c)$ is some fixed element of $\mathbb{F}_3^k$. From this, we can understand the possibilities for $ \mathrm{loc}_{\mathfrak{p}_0}(c)$ and $ \mathrm{loc}_{\mathfrak{p}_1}(c)$, which essentially determine the size of the Selmer group of $E$ upon base change of an $S_3$-cubic extension with quadratic resolvent field $F$. 
\end{remark}

Recall from \eqref{eq:orig_Selmerfieldk} that $k_{\mathfrak{d},\omega}$ is the abelian extension of $k(E[3])$ obtained by taking the fixed field of the intersection of kernels of the restrictions to $k(E[3])$ of each element of $\Sel(E, \omega)_k$. As in \eqref{eq:orig_Selmerfieldk}, for a quadratic extension $F/k$ we define 
\begin{equation}\label{eq:SelmerfieldF}
    F_{\mathfrak{d},\omega} = \text{Fixed field of } \bigcap_{c \in \Sel(E[3], \omega)_F} \text{Ker}(c: \text{Gal}(\overline{k}/F(E[3])) \to E[3]).
\end{equation}
We will use the notation $k_{\mathfrak{d},\omega}$ to denote instead
\begin{equation}\label{eq:Selmerfieldk}
    k_{\mathfrak{d},\omega} = \text{Fixed field of } \bigcap_{c \in \mathrm{cor}_{F/k}(\Sel(E[3], \omega)_F)} \text{Ker}(c: \text{Gal}(\overline{k}/k(E[3])) \to E[3]).
\end{equation}
By Proposition \ref{prop:KMR9.3}, we have that 
\[\mathrm{Gal}(k_{\mathfrak{d},\omega}/k(E[3])) \simeq \mathrm{Hom}(\mathrm{cor}_{F/k}(\Sel(E, \omega)_F), E[3]) \simeq E[3]^{\dim_{\mathbb{F}_3} \mathrm{Sel}(E, \omega)_F} \]
and 
\[\mathrm{Gal}(F_{\mathfrak{d},\omega}/F(E[3])) \simeq \mathrm{Hom}(\Sel(E, \omega)_F, E[3]) \simeq E[3]^{\dim_{\mathbb{F}_3} \mathrm{Sel}(E, \omega)_F}. \] 

\begin{lemma}
\begin{enumerate}
    \item[] With the notation as above:
    \item We have $F_{\mathfrak{d},\omega} = F k_{\mathfrak{d},\omega}$.
    \item We have
    \begin{align}
        \begin{split}
            \emph{\text{Gal}}(k_{\mathfrak{d},\omega}/k) &\cong \emph{\text{SL}}_2(\mathbb{F}_3) \rtimes E[3]^{\dim_{\mathbb{F}_3} \mathrm{Sel}(E, \omega)_F}, \quad  \text{and} \\
            \emph{\text{Gal}}(F_{\mathfrak{d},\omega}/k) &\cong \mathbb{Z}/2\mathbb{Z} \oplus \left( \emph{\text{SL}}_2(\mathbb{F}_3) \rtimes E[3]^{\dim_{\mathbb{F}_3} \mathrm{Sel}(E, \omega)_F} \right).
        \end{split}
    \end{align}
\end{enumerate}
\end{lemma}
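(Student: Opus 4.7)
The proof splits into two parts, and I sketch the strategy for each.

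For part (1), the plan is to leverage the restriction--corestriction identity $\mathrm{res}_{F/k} \circ \mathrm{cor}_{F/k} = [F:k] = 2$ on $H^1(F, E[3])$. Write $G := \Gal(\overline{k}/k(E[3]))$ and $H := \Gal(\overline{k}/F(E[3]))$; by the assumed linear disjointness of $F$ from $k(E[3])$, $H$ is the unique index-$2$ (normal) subgroup of $G$, and both are normal in $\Gal(\overline{k}/k)$. For $c \in \Sel(E[3],\omega)_F$, the key observation is that $\mathrm{cor}_{F/k}(c)|_H = 2 \cdot c|_H$ as homomorphisms $H \to E[3]$; because $2$ is invertible modulo $3$, these two homomorphisms have the same kernel. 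Consequently $\ker(c|_H) = \ker(\mathrm{cor}_{F/k}(c)) \cap H$. Intersecting over all $c$ and passing to fixed fields, using that the fixed field of an intersection of subgroups is the compositum of the individual fixed fields, yields $F_{\mathfrak{d},\omega} = F(E[3]) \cdot k_{\mathfrak{d},\omega} = F \cdot k_{\mathfrak{d},\omega}$.

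For the first isomorphism in part (2), the plan is to identify $\Gal(k_{\mathfrak{d},\omega}/k(E[3]))$ with $E[3]^{\dim_{\mathbb{F}_3}\Sel(E[3],\omega)_F}$ as a $\Gal(k(E[3])/k)$-module by invoking Proposition~\ref{prop:KMR9.3} for the $\mathbb{F}_3$-subspace $\mathrm{cor}_{F/k}\Sel(E[3],\omega)_F \subset H^1(k,E[3])$; the corestriction map is injective on $3$-torsion by the identity above, so the dimension is preserved. What remains is the splitting of
\[
1 \to E[3]^n \to \Gal(k_{\mathfrak{d},\omega}/k) \to \SL_2(\mathbb{F}_3) \to 1.
\]
I would show the classifying class in $H^2(\SL_2(\mathbb{F}_3), E[3]^n)$ vanishes via restriction to a Sylow $3$-subgroup $\Sigma \cong \mathbb{Z}/3\mathbb{Z}$: the restriction map is injective on $3$-torsion, and its image lands in the $N(\Sigma)/\Sigma$-fixed part of $H^2(\Sigma, E[3]) \cong \mathbb{F}_3$; the element $-I \in \SL_2(\mathbb{F}_3)$ normalizes $\Sigma$ trivially but acts by $-1$ on coefficients, forcing the image into a $(-1)$-eigenspace of a $1$-dimensional $\mathbb{F}_3$-space and hence to zero.

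For the second isomorphism, I would combine part (1) with a disjointness check: $F \cap k_{\mathfrak{d},\omega}$ is either $k$ or $F$, and if $F \subset k_{\mathfrak{d},\omega}$ then $F(E[3]) \subset k_{\mathfrak{d},\omega}$ would produce a $\mathbb{Z}/2$ quotient of the elementary abelian $3$-group $\Gal(k_{\mathfrak{d},\omega}/k(E[3]))$, a contradiction. Hence $F \cap k_{\mathfrak{d},\omega} = k$, so the Galois group of the compositum $F \cdot k_{\mathfrak{d},\omega}$ is the direct product $\Gal(F/k) \times \Gal(k_{\mathfrak{d},\omega}/k)$, giving the claimed $\mathbb{Z}/2\mathbb{Z} \oplus \Gal(k_{\mathfrak{d},\omega}/k)$ structure. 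The principal obstacle in this plan is the $H^2$-vanishing in part (2); the remaining steps are largely formal once the corestriction identity and the linear disjointness of $F$ from $k(E[3])$ are in hand.
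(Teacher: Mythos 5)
Your proof is correct and follows the same route the paper takes --- the identity $\mathrm{res}_{F/k}\circ\mathrm{cor}_{F/k}=2$ (invertible mod $3$), the disjointness $F\cap k(E[3])=k$, and Proposition~\ref{prop:KMR9.3} --- but it supplies details the paper suppresses in its two-sentence proof, most notably a justification of the semidirect-product splitting: your vanishing of $H^2(\mathrm{SL}_2(\mathbb{F}_3),E[3]^n)$ via restriction to a Sylow $3$-subgroup $\Sigma$ together with the central element $-I$ is the right argument, and the paper simply asserts the conclusion. One small wording slip: the image of restriction must land in the $(+1)$-eigenspace of the $N(\Sigma)/\Sigma$-action (this is the stability condition), while $-I$ forces all of $H^2(\Sigma,E[3]^n)$ to be a $(-1)$-eigenspace; it is the intersection of these two eigenspaces that vanishes, not that lying in a $(-1)$-eigenspace is by itself obstructive.
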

\begin{proof}
    Because the group $\text{SL}_2(\mathbb{F}_3)$ does not have any index $2$ normal subgroup, we have $F \cap k(E[3]) = k$. The isomorphism $\mathrm{cores}_{F/k}: \mathrm{Sel}(E,\omega)_F \to \mathrm{cor}_{F/k} (\mathrm{Sel}(E,\omega)_k)$ implies that $\dim_{\mathbb{F}_3} \mathrm{Sel}(E, \omega)_F = \dim_{\mathbb{F}_3} \mathrm{cor}_{F/k} (\mathrm{Sel}(E, \omega)_F)$, from which all the statements of the lemma follow.
\end{proof}

For the rest of this section, let $q$ denote a prime of $k$, let $\mathfrak{q}$ denote a prime above $q$ in $k_{\mathfrak{d},\omega}$, and let $\mathfrak{Q}$ denote a prime above $q$ in $F_{\mathfrak{d},\omega}$.

For now, we only consider primes $q$ of $K$ which split into two primes in $F$, say $q\mathcal{O}_F = Q_0Q_1$. Consequently, because $F$ and $k_{\mathfrak{d}, \omega}$ are linearly disjoint, $\mathfrak{q}$ is forced to split completely as $\mathfrak{q}\mathcal{O}_{F_{\mathfrak{d}, \omega}}=\mathfrak{Q}_0\mathfrak{Q}_1$. The Artin symbol
$$\left( \frac{k_{\mathfrak{d},\omega}/k}{\mathfrak{q}} \right)$$
determines localization at $q$. The two Artin symbols
$$\left( \frac{F_{\mathfrak{d},\omega}/k}{\mathfrak{Q}_i} \right)$$
determine localization at $\mathfrak{q}_i$.
As elements, for $i=0,1$, 
$$\left( \frac{F_{\mathfrak{d},\omega}/k}{\mathfrak{Q}_i} \right)^{f(Q_i \mid q)}  = \left( \frac{F_{\mathfrak{d},\omega}/k}{\mathfrak{Q}_i} \right)^{1} = \left( \frac{F_{\mathfrak{d},\omega}/F}{\mathfrak{Q}_i} \right).$$
Hence, there are 3 Artin symbols in play, all of which lie in a common conjugacy class of $\text{Gal}(k_{\mathfrak{d},\omega}/k)$:
\begin{equation*}
    \left( \frac{k_{\mathfrak{d},\omega}/k}{\mathfrak{q}} \right), \left( \frac{F_{\mathfrak{d},\omega}/k}{\mathfrak{Q}_0} \right), \left( \frac{F_{\mathfrak{d},\omega}/k}{\mathfrak{Q}_1} \right).
\end{equation*}

Because $\mathfrak{Q}_0, \mathfrak{Q}_1$ are primes lying above $\mathfrak{q}$, we use the canonical restriction map which forgets the $\mathbb{Z}/2\mathbb{Z}$ component
\begin{equation*}
\text{res}_{F_{\mathfrak{d},\omega}/k_{\mathfrak{d},\omega}}: \text{Gal}(F_{\mathfrak{d},\omega}/k) \to \text{Gal}(k_{\mathfrak{d},\omega}/k)
\end{equation*}
to obtain
\begin{equation*}
\text{res}_{F_{\mathfrak{d},\omega}/k_{\mathfrak{d},\omega}} \left( \frac{F_{\mathfrak{d},\omega}/k}{\mathfrak{Q}_0} \right) = \left( \frac{k_{\mathfrak{d},\omega}/k}{\mathfrak{Q}_0 \cap k_{\mathfrak{d},\omega}} \right) = \left( \frac{k_{\mathfrak{d},\omega}/k}{\mathfrak{q}} \right).
\end{equation*}
The identical equation can be obtained for $\mathfrak{Q}_1$ as well. Because $\text{res}_{F_{\mathfrak{d},\omega}/k_{\mathfrak{d},\omega}}$ induces an isomorphism between $\text{Gal}(F_{\mathfrak{d},\omega}/F)$ and $\text{Gal}(k_{\mathfrak{d},\omega}/k)$, we in fact obtain
\begin{equation*}
    \left( \frac{F_{\mathfrak{d},\omega}/k}{\mathfrak{Q}_0} \right) = \left( \frac{k_{\mathfrak{d},\omega}/k}{\mathfrak{q}} \right) = \left( \frac{F_{\mathfrak{d},\omega}/k}{\mathfrak{Q}_1} \right).
\end{equation*}

Because we have that the 3 Artin symbols are equal to one another, Lemma \ref{lem:corestrictions} implies that for each prime $\mathfrak{q}$ lying above $p$,
\begin{align*}
    \text{loc}_p(\text{cores}_{F/k}(c)) \left( \frac{k_{\mathfrak{d},\omega}/k}{\mathfrak{q}} \right) &= \text{loc}_p(\text{cores}_{F/k}(c)) (\text{Frob}_p) \\
    &= \text{loc}_{\mathfrak{p}_0}(c) (\text{Frob}_{\mathfrak{p}_0}) + \text{loc}_{\mathfrak{p}_1}(c) (\text{Frob}_{\mathfrak{p}_1}) \\
    &= \text{loc}_{\mathfrak{p}_0}(c) \left( \frac{F_{\mathfrak{d},\omega}/k}{\mathfrak{Q}_0} \right) + \text{loc}_{\mathfrak{p}_1}(c) \left( \frac{F_{\mathfrak{d},\omega}/k}{\mathfrak{Q}_1} \right). 
\end{align*}
Because the 3 Artin symbols are equal to each other and the localization map at $\mathfrak{p}_i$ is defined as evaluating $c$ at the Frobenius element of $\mathfrak{p}_i$, we have
\begin{equation*}
    \text{loc}_{\mathfrak{p}_0}(c) \left( \frac{F_{\mathfrak{d},\omega}/k}{\mathfrak{Q}_0} \right) = \text{loc}_{\mathfrak{p}_1}(c) \left( \frac{F_{\mathfrak{d},\omega}/k}{\mathfrak{Q}_1} \right).
\end{equation*}
This implies
\begin{align*}
    \text{loc}_p(\text{cores}_{F/k}(c)) \left( \frac{k_{\mathfrak{d},\omega}/k}{\mathfrak{q}} \right) &= 2 \text{loc}_{\mathfrak{p}_0}(c) \left( \frac{F_{\mathfrak{d},\omega}/k}{\mathfrak{Q}_0} \right), \\
    2 \text{loc}_p(\text{cores}_{F/k}(c)) \left( \frac{k_{\mathfrak{d},\omega}/k}{\mathfrak{q}} \right) &= \text{loc}_{\mathfrak{p}_0}(c) \left( \frac{F_{\mathfrak{d},\omega}/k}{\mathfrak{Q}_0} \right) = \text{loc}_{\mathfrak{p}_1}(c) \left( \frac{F_{\mathfrak{d},\omega}/k}{\mathfrak{Q}_1} \right).
\end{align*}
This relation can also be obtained from the fact that the composition of the corestriction map $\text{cores}_{F/k}$ with the restriction map $\text{res}_{F/k}$ is a  multiplication by $2$ over the Selmer group $\text{Sel}(E[3],\omega)_F$.

\begin{definition}
Let $\mathfrak{d} \in \mathcal{D}$ and $\omega \in \Omega_\mathfrak{d}$. Given a prime $p$ over $k$, we denote by
    \begin{align}
        t_\omega(p) &:= \dim_{\mathbb{F}_3} \text{im} \left(\text{loc}_{p}: \text{cor}_{F/k}(\Sel(E[3],\omega)_F) \to H^1_{ur}(k_p,E[3]) \right).
    \end{align}
\end{definition}

With these notations in hand, we can analyze how changes in local characters at a single prime $p$ that splits over $F/k$ affect the dimensions of local Selmer groups.
\begin{lemma} \label{lem:split_prime_change}
    Let $\mathfrak{d} \in \mathcal{D}$ and $\omega \in \Omega_\mathfrak{d}$. Let $p \in \mathcal{P}^k_j$, $j = 0,1,2$ be a prime of $k$ that splits completely in $F$ as $p \mathcal{O}_F = \mathfrak{p}_0 \mathfrak{p}_1$. Then for any $\omega' \in \eta^{-1}_{\mathfrak{d},p}(\omega) \in \Omega_{\mathfrak{d}p}$, 
    \begin{align}
    \begin{split}
        & \; \; \; \; \dim_{\mathbb{F}_3} \Sel(E[3],\omega')_F - \dim_{\mathbb{F}_3} \Sel(E[3],\omega)_F = \begin{cases}
            0 &\text{ if } p \in \mathcal{P}^k_0 \\
            2 &\text{ if } p \in \mathcal{P}^k_1 \text{ and } t_\omega(p) = 0 \\
            -2 &\text{ if } p \in \mathcal{P}^k_1 \text{ and } t_\omega(p) = 1 \\
            4 &\text{ if } p \in \mathcal{P}^k_2 \text{ and } t_\omega(p) = 0 \\
            & \; \; \; \; \text{ for exactly } 2 \text{ of } \omega' \in \eta^{-1}_{\mathfrak{d},p}(\omega)\\
            0 &\text{ if } p \in \mathcal{P}^k_2 \text{ and } t_\omega(p) = 0 \\ & \; \; \; \; \text{ for the other } 4 \text{ of } \omega' \in \eta^{-1}_{\mathfrak{d},p}(\omega) \\
            0 &\text{ if } p \in \mathcal{P}^k_2 \text{ and } t_\omega(p) = 1 \\
            -4 &\text{ if } p \in \mathcal{P}^k_2 \text{ and } t_\omega(p) = 2.
        \end{cases}
    \end{split}
    \end{align}
\end{lemma}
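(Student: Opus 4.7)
The plan is to leverage the linking identity derived immediately before the statement of the lemma: for every $c \in \Sel(E[3],\omega)_F$ one has $\mathrm{loc}_{\mathfrak{p}_0}(c) = \mathrm{loc}_{\mathfrak{p}_1}(c) = 2\, \mathrm{loc}_p(\mathrm{cor}_{F/k}(c))$. Since $2$ is invertible modulo $3$, the common $\mathbb{F}_3$-dimension of $\mathrm{loc}_{\mathfrak{p}_i}(\Sel(E[3],\omega)_F) \subseteq H^1_{ur}(F_{\mathfrak{p}_i}, E[3])$ equals $t_\omega(p)$. Moreover, by Example \ref{ex:split}, passing from $\omega$ to $\omega'$ replaces the local Lagrangian at each of $\mathfrak{p}_0$ and $\mathfrak{p}_1$ by the \emph{same} ramified Lagrangian in $H^1(k_p, E[3])$. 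This coordinate-wise diagonal structure is what will drive the doubling phenomenon visible in the statement.

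First I would dispatch the case $p \in \mathcal{P}^k_0$: here $H^1(k_p, E[3]) = 0$, hence $H^1(F_{\mathfrak{p}_i}, E[3]) = 0$, so no modification at $p$ can alter the Selmer condition and the dimensional change is $0$. For the remaining cases, the strategy is to run the single-prime analysis of \cite[Proposition 7.2]{KMR14} over $F$ simultaneously at $\mathfrak{p}_0$ and $\mathfrak{p}_1$. Concretely, I would invoke a Greenberg--Wiles style dimensional comparison between $\Sel(E[3], \omega')_F$ and $\Sel(E[3], \omega)_F$, whose only differing local conditions lie at $\{\mathfrak{p}_0, \mathfrak{p}_1\}$. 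Because the new pair of Lagrangians is diagonal in $H^1(F_{\mathfrak{p}_0}, E[3]) \oplus H^1(F_{\mathfrak{p}_1}, E[3])$ and the image of the Selmer group under $(\mathrm{loc}_{\mathfrak{p}_0}, \mathrm{loc}_{\mathfrak{p}_1})$ is also diagonal of dimension $t_\omega(p)$, the computation reduces to a one-variable count identical in structure to the base-field case of \cite{KMR14}, scaled by a factor of $2$ accounting for the two linked coordinates. This produces the $\pm 2$ shift for $p \in \mathcal{P}^k_1$ (the sign tracking $t_\omega(p) \in \{0, 1\}$) and the $\{-4, 0, +4\}$ menu for $p \in \mathcal{P}^k_2$, with the sign determined by whether the new ramified Lagrangian is transverse to, meets, or contains the common localization image.

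The hardest part will be the $(2, 4)$ case split when $p \in \mathcal{P}^k_2$ and $t_\omega(p) = 0$. Here $H^1(k_p, E[3]) \cong \mathbb{F}_3^4$ admits three ramified Lagrangians (by Remark \ref{remark:local-conditions-Kummer}), and the six lifts in $\eta^{-1}_{\mathfrak{d},p}(\omega)$ distribute as two per Lagrangian. The count amounts to the combinatorial assertion that, among the three available ramified Lagrangians, exactly one is transverse to the existing localization image of $\mathrm{cor}_{F/k}(\Sel(E[3],\omega)_F)$ under $\mathrm{loc}_p$, yielding the two $\omega'$ that produce a $+4$ shift, while the remaining two Lagrangians absorb that image and produce no change. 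Establishing this combinatorial distinction carefully, in parallel with the $i = 2$ branch of \cite[Proposition 7.2]{KMR14}, is where most of the work will be concentrated.
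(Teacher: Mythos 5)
Your overall strategy matches the paper's proof almost exactly: both apply the corestriction linking identity (Lemma \ref{lem:corestrictions}) to exhibit the diagonal structure of the local Kummer images at $\mathfrak{p}_0$ and $\mathfrak{p}_1$, and both then run a Greenberg--Wiles type comparison in the spirit of \cite[Proposition 7.2]{KMR14} with the diagonal replacements at the two coordinates producing a factor-of-two scaling. The case $p \in \mathcal{P}^k_0$ is indeed trivial as you say (the paper leaves it implicit), and your ``scale by two'' output $\{-2,+2\}$ for $\mathcal{P}^k_1$ and $\{-4,0,+4,0\}$ for $\mathcal{P}^k_2$ agrees with the lemma. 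So the blueprint is right, and it is the same blueprint the paper uses (paired short exact sequences comparing $\Sel(E[3],\omega)_F$ and $\Sel(E[3],\omega')_F$ against the common $\Sel(E[3],\omega)_{F,(p)}$, then counting the intersection of $W := (\oplus_i\mathrm{loc}_{\mathfrak{p}_i})(\Sel(E[3],\omega)_F^{(p)})$ with the diagonal Lagrangians).

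The one genuine flaw is your proposed discriminator for the $(2,4)$ split when $p \in \mathcal{P}^k_2$ and $t_\omega(p)=0$. You say the two $\omega'$ giving a $+4$ shift are detected by which ramified Lagrangian is ``transverse to the existing localization image of $\mathrm{cor}_{F/k}(\Sel(E[3],\omega)_F)$ under $\mathrm{loc}_p$''. But by definition of $t_\omega(p)$, when $t_\omega(p)=0$ that image is trivial, so transversality to it (or containment of it) holds for every Lagrangian and cannot distinguish anything. The correct reference object is the image $W$ of the relaxed Selmer group $\Sel(E[3],\omega)_F^{(p)}$ (a Lagrangian of the doubled space), not the strict one; the change in dimension is $\dim_{\mathbb{F}_3}\bigl(W \cap (\mathcal{H}_{\omega'_{\mathfrak{p}_0}}\oplus\mathcal{H}_{\omega'_{\mathfrak{p}_1}})\bigr) - 2t_\omega(p)$, and the shift is $+4$ precisely when $W$ coincides with the diagonal ramified Lagrangian, i.e.\ when the intersection is \emph{maximal}, not transverse. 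So the geometry in your sketch is both pointed at the wrong subspace and has the containment-vs.-transversality dichotomy reversed. Were you to execute the ``one-variable count identical in structure to the base-field case'' of \cite[Proposition 7.2]{KMR14} faithfully and simply double it, you would still land on the correct answer, so this is a fixable gap in the narration rather than an obstruction to the approach; but as written, the justification of the hardest case does not work.
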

\begin{proof}
    Given an ideal $I \subset \mathcal{O}_k$, denote by
    \begin{align}
    \begin{split}
        \Sel(E[3],\omega)_{k}^I &:= \text{Ker} \left( \oplus_{v \nmid I} \text{loc}_v: H^1(k,E[3]) \to \oplus_{v \nmid I} H^1(k_p,E[3]) / \mathcal{H}_{\omega_v} \right), \\
        \Sel(E[3],\omega)_{k,I} &:= \text{Ker} \left( \oplus_{v \mid I} \text{loc}_v: \Sel(E[3],\omega)_k^I \to \oplus_{v \mid I} H^1(k_v, E[3]) \right).
    \end{split}
    \end{align}
    As stated in Proposition 7.2 of \cite{KMR14} and Theorem 3.9 of \cite{KMR13}, we have the following two short exact sequences, where we let $I = p \mathcal{O}_F = \mathfrak{p}_0 \mathfrak{p}_1$. For short-hand notation, we will use $(p)$ to denote $p \mathcal{O}_F$.
    \begin{align} \label{eqn:ses_Selmer_comparison}
        \begin{split}
            0 \to \Sel(E[3],\omega)_{F,(p)} \to &\Sel(E[3],\omega)_F \to (\oplus_{i=0}^1 \text{loc}_{\mathfrak{p}_i})(\Sel(E[3],\omega)_F^{(p)}) \cap (\oplus_{i=0}^1 \mathcal{H}_{\omega_{\mathfrak{p}_i}}) \to 0. \\
            0 \to \Sel(E[3], \omega)_{F,(p)} \to &\Sel(E[3],\omega')_F \to (\oplus_{i=0}^1 \text{loc}_{\mathfrak{p}_i})(\Sel(E[3],\omega)_F^{(p)}) \cap (\oplus_{i=0}^1 \mathcal{H}_{\omega'_{\mathfrak{p}_i}}) \to 0.
        \end{split}
    \end{align}
    We note that for all $i = 0,1$,
    \begin{align*}
        \mathcal{H}_{\omega_{\mathfrak{p}_0}} = \mathcal{H}_{\omega_{\mathfrak{p}_1}} &= H^1_{ur}(k_p, E[3]), \\
        \mathcal{H}_{\omega'_{\mathfrak{p}_0}} = \mathcal{H}_{\omega'_{\mathfrak{p}_1}} &\neq H^1_{ur}(k_p, E[3]),
    \end{align*}
    because these spaces are images of local Kummer maps, both of which are isomorphic to $k_p$ rational points of the 2-dimensional abelian variety $A_{K/k}$.
    Because the space
    \begin{equation*}
        (\oplus_{i=0}^1 \text{loc}_{\mathfrak{p}_i})(\Sel(E[3],\omega)_F^{(p)}) \subset \oplus_{i=0}^1 H^1(k_p,E[3])
    \end{equation*}
    is a Lagrangian subspace with respect to the sum of local Tate pairings (as shown in Theorem 3.9 of \cite{KMR13}), we can compare the dimensions to obtain the isomorphism
    \begin{align}
        (\oplus_{i=0}^1 \text{loc}_{\mathfrak{p}_i})(\Sel(E[3],\omega)_F^{(p)}) &\cong \text{loc}_{\mathfrak{p}_0}(\Sel(E[3],\omega)_F^{\mathfrak{p}_0}) \oplus \text{loc}_{\mathfrak{p}_1}(\Sel(E[3],\omega)_F^{\mathfrak{p}_1}).
    \end{align}
    Denote by $F_{\mathfrak{d},\omega}^{(p)}$ the Galois extension over $F$ defined as
    \begin{equation}
        F_{\mathfrak{d},\omega}^{(p)} := \text{Fixed field of } \bigcap_{c \in \Sel(E[3],\omega)_F^{(p)}} \text{Ker} \left(c: \text{Gal}(\overline{k}/F(E[3])) \to E[3]\right). 
    \end{equation}
    Following the proof of Lemma \ref{lem:corestrictions}, the images of the two localization maps $\text{loc}_{\mathfrak{p}_0}$ and $\text{loc}_{\mathfrak{p}_1}$ of $\Sel(E[3],\omega)_F^{(p)}$ are identical. Hence, we have for any Lagrangian subspaces $\mathcal{H}_{\omega_{\mathfrak{p}_i}}$ and $\mathcal{H}_{\omega'_{\mathfrak{p}_i}}$,
    \begin{align}
        \dim_{\mathbb{F}_3} (\oplus_{i=0}^1 \text{loc}_{\mathfrak{p}_i})(\Sel(E[3],\omega)_F^{(p)}) \cap (\oplus_{i=0}^1 \mathcal{H}_{\omega_{\mathfrak{p}_i}}) &\equiv 0 \text{ mod } 2, \\
        \dim_{\mathbb{F}_3} (\oplus_{i=0}^1 \text{loc}_{\mathfrak{p}_i})(\Sel(E[3],\omega)_F^{(p)}) \cap (\oplus_{i=0}^1 \mathcal{H}_{\omega'_{\mathfrak{p}_i}}) &\equiv 0 \text{ mod } 2.
    \end{align}
    Recall that 
    \begin{align*}
        t_\omega(p) &:= \dim_{\mathbb{F}_3} \text{im} \left(\text{loc}_{p}: \text{cor}_{F/k}(\Sel(E[3],\omega)_F) \to H^1_{ur}(k_p,E[3]) \right).
    \end{align*}
    Using the isomorphism induced from the corestriction map $\text{cor}_{F/k}$, we can apply the proof of Proposition 7.2 of \cite{KMR14} to obtain
    \begin{align}
        \dim_{\mathbb{F}_3} (\oplus_{i=0}^1 \text{loc}_{\mathfrak{p}_i})(\Sel(E[3],\omega)_F^{(p)}) \cap (\oplus_{i=0}^1 \mathcal{H}_{\omega_{\mathfrak{p}_i}}) &= 2 \cdot t_\omega(p).
    \end{align}
    We hence obtain
    \begin{align}
    \begin{split}
        &\; \; \; \; \dim_{\mathbb{F}_3} \Sel(E[3],\omega')_F - \dim_{\mathbb{F}_3} \Sel(E[3],\omega)_F \\
        &= \dim_{\mathbb{F}_3} (\oplus_{i=0}^1 \text{loc}_{\mathfrak{p}_i})(\Sel(E[3],\omega)_F^{(p)}) \cap (\oplus_{i=0}^1 \mathcal{H}_{\omega'_{\mathfrak{p}_i}}) - 2 \cdot t_\omega(p).
    \end{split}
    \end{align}
    One thing to notice here is that if $\dim_{\mathbb{F}_3} \Sel(E[3], \omega)_F \leq 1$, then $t_\omega(p) = 0$. Indeed, this follows from the surjection stated in (\ref{eqn:ses_Selmer_comparison}). Likewise, if $\dim_{\mathbb{F}_3} \Sel(E[3], \omega)_F \leq 3$, then $t_\omega(p) \neq 2$.
    
    We proceed to prove the rest of the lemma by considering two cases where $p \in \mathcal{P}_1^k$ or $p \in \mathcal{P}_2^k$. 

    \medskip 
    
    \textbf{Case 1: $j = 1$}
    
    \medskip 
    
    Suppose $p \in \mathcal{P}_1^k$. Then because $H^1_{ur}(k_p,E[3])$ has a unique unramified Lagrangian subspace and a unique ramified Lagrangian subspace,
    \begin{equation}
        \dim_{\mathbb{F}_3} (\oplus_{i=0}^1 \text{loc}_{\mathfrak{p}_i})(\Sel(E[3],\omega)_F^{(p)}) \cap (\oplus_{i=0}^1 \mathcal{H}_{\omega'_{\mathfrak{p}_i}}) = 2 - 2 \cdot t_\omega(p).
    \end{equation}
    Hence, we obtain
    \begin{equation}
        \dim_{\mathbb{F}_3} \Sel(E[3],\omega')_F - \dim_{\mathbb{F}_3} \Sel(E[3],\omega)_F = \begin{cases}
            2 &\text{ if } t_\omega(p) = 0 \\
            -2 &\text{ if } t_\omega(p) = 1.
        \end{cases}
    \end{equation}

    \medskip 
    
    \textbf{Case 2: $j = 2$}

    Suppose $p \in \mathcal{P}_2^k$. Then because $H^1_{ur}(k_p,E[3])$ has a unique unramified Lagrangian subspace and $3$ ramified Lagrangian subspaces,
    \begin{equation}
        \dim_{\mathbb{F}_3} (\oplus_{i=0}^1 \text{loc}_{\mathfrak{p}_i})(\Sel(E[3],\omega)_F^{(p)}) \cap (\oplus_{i=0}^1 \mathcal{H}_{\omega'_{\mathfrak{p}_i}}) = \begin{cases}
            4 &\text{ if } t_\omega(p) = 0 \text{ for one of the Lag. sbsp. } \\
            2 &\text{ if } t_\omega(p) = 0 \text{ for two other Lag. sbsp. } \\
            2 &\text{ if } t_\omega(p) = 1 \\
            0 &\text{ if } t_\omega(p) = 2.
        \end{cases}
    \end{equation}
    Hence, we obtain
    \begin{equation}
        \dim_{\mathbb{F}_3} \Sel(E[3],\omega')_F - \dim_{\mathbb{F}_3} \Sel(E[3],\omega)_F = \begin{cases}
            4 &\text{ if } t_\omega(p) = 0 \text{ for one of the Lag. sbsp. }\\
            0 &\text{ if } t_\omega(p) = 0 \text{ for two other Lag. sbsp. } \\
            0 &\text{ if } t_\omega(p) = 1 \\
            -4 &\text{ if } t_\omega(p) = 2.
        \end{cases}
    \end{equation}
\end{proof}

Lemma \ref{lem:split_prime_change} shows that the set of all possible images of the localization maps $\text{loc}_{\mathfrak{p}_0} \oplus \text{loc}_{\mathfrak{p}_1}$ is identical to the set of all possible subsets of $H^1(k_v, \text{Res}_k^F E[3])$ that is equal to $H^1_f(k_v, B_{K/k}[1-\sigma_K])$ appearing in Remark \ref{remark:local-conditions-Kummer}. This leverages us to use the corestriction map to govern distribution of images of local Kummer maps by using local characters in $\Omega_\mathfrak{d}$ for each square-free $\mathfrak{d}$.

We now turn our focus to analyzing how changes in local characters at an inert prime $p$ over $F = k(\zeta_3)/k$ affect the dimensions of local Selmer groups. 
\begin{lemma} \label{lem:inert_Pis}
    Suppose $\emph{\text{Gal}}(k(E[3])/k) = \emph{\text{GL}}_2(\mathbb{F}_3)$. Let $p$ be a prime of $k$ that is inert over $k(\zeta_3)/k$. Then the following equivalence relation holds.
    \begin{enumerate}
        \item $p \in \mathcal{P}_1^k \cap \mathcal{P}_2^F$ if and only if $\emph{\text{Frob}}_p$ has order $2$ in $\emph{\text{Gal}}(k(E[3])/k)$.
        \item $p \in \mathcal{P}_0^k \cap \mathcal{P}_0^F$ if and only if $\emph{\text{Frob}}_p$ has order $8$ in $\emph{\text{Gal}}(k(E[3])/k)$.
    \end{enumerate}
\end{lemma}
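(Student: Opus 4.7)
The plan is to translate both the inertness of $p$ in $F/k$ and the two local $3$-torsion dimensions into statements about the Frobenius element $\sigma := \text{Frob}_p \in \text{GL}_2(\mathbb{F}_3) = \text{Gal}(k(E[3])/k)$ acting on $E[3] \cong \mathbb{F}_3^2$. Since the determinant character of $\text{GL}_2(\mathbb{F}_3)$ on $E[3]$ coincides with the mod-$3$ cyclotomic character (Weil pairing), the fixed field of $\text{SL}_2(\mathbb{F}_3) \subset \text{GL}_2(\mathbb{F}_3)$ inside $k(E[3])$ equals $k(\zeta_3) = F$. Consequently $p$ is inert in $F/k$ if and only if $\det(\sigma) = -1$. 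For unramified $p$, $\dim_{\mathbb{F}_3} E[3](k_v) = \dim_{\mathbb{F}_3} E[3]^\sigma$; since the residual degree of $\mathfrak{p} \mid p$ in $F/k$ is $2$, we have $\text{Frob}_{\mathfrak{p}} = \sigma^2$, and hence $\dim_{\mathbb{F}_3} E[3](F_{\mathfrak{p}}) = \dim_{\mathbb{F}_3} E[3]^{\sigma^2}$.

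For part (1), the hypothesis $p \in \mathcal{P}_1^k \cap \mathcal{P}_2^F$ becomes: $\sigma$ fixes a one-dimensional subspace and $\sigma^2 = I$. Thus $\sigma$ is an involution; the identity is excluded (it would put $p$ in $\mathcal{P}_2^k$), and $-I$ is excluded as well (it fixes no nonzero vector, and anyway $\det(-I) = 1$ so it cannot arise for an inert $p$). Therefore $\sigma$ has order exactly $2$ with eigenvalues $\{1,-1\}$ and $\det(\sigma) = -1$, as required. Conversely, any order-$2$ element of $\text{GL}_2(\mathbb{F}_3)$ with $\det = -1$ is conjugate to $\text{diag}(1,-1)$, which fixes a line and squares to $I$, putting $p \in \mathcal{P}_1^k \cap \mathcal{P}_2^F$. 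For part (2), the hypothesis $p \in \mathcal{P}_0^k \cap \mathcal{P}_0^F$ says neither $\sigma$ nor $\sigma^2$ has $1$ as an eigenvalue, i.e.\ neither $1$ nor $-1$ lies in the spectrum of $\sigma$. Combined with $\det(\sigma) = -1$, the two eigenvalues form a Frobenius-conjugate pair $\{\alpha, \alpha^3\} \subset \mathbb{F}_9^\times \setminus \mathbb{F}_3^\times$ satisfying $\alpha^4 = \alpha \cdot \alpha^3 = -1$. Since $\mathbb{F}_9^\times$ is cyclic of order $8$ and $-1$ is its unique element of order $2$, $\alpha$ must be a primitive $8$-th root of unity, so $\sigma$ has order exactly $8$. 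The converse is immediate: an order-$8$ element of $\text{GL}_2(\mathbb{F}_3)$ has primitive $8$-th root eigenvalues in $\mathbb{F}_9 \setminus \mathbb{F}_3$, so $\pm 1$ are not eigenvalues of $\sigma$ or $\sigma^2$, and $\det(\sigma) = -1$.

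I expect no serious obstacle: the argument reduces to the standard conjugacy-class structure of $\text{GL}_2(\mathbb{F}_3)$, and one can verify by a quick count that the coset $\text{GL}_2(\mathbb{F}_3) \setminus \text{SL}_2(\mathbb{F}_3)$ (of size $24$) is exhausted by twelve elements of order $2$ and twelve of order $8$, so parts (1) and (2) together cover \emph{every} inert prime, confirming the dichotomy. The only point of care is ruling out $\sigma = -I$ in the order-$2$ analysis, which is automatic from $\det(-I) = +1$ and the inertness hypothesis.
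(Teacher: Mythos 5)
Your proof is correct, but it follows a genuinely different route from the paper's. You reduce everything to the conjugacy class of $\sigma = \mathrm{Frob}_p$ in $\mathrm{GL}_2(\mathbb{F}_3)$: inertness in $F/k$ is $\det(\sigma) = -1$ (via the Weil-pairing identification of $\det$ with the mod-$3$ cyclotomic character), $\mathcal{P}_i^k$ tracks $\dim E[3]^\sigma$, and $\mathcal{P}_j^F$ tracks $\dim E[3]^{\sigma^2}$, so the whole lemma becomes an exercise in eigenvalues of $2\times 2$ matrices over $\mathbb{F}_3$. The paper instead invokes Proposition~5.9 of \cite{KMR14} for the equivalence $p \in \mathcal{P}_2^F \iff \mathrm{Frob}_p$ has order $2$, then establishes the remaining implications more concretely: it factors the degree-$4$ $3$-division polynomial $\psi_3$ over $k_p$ to show $p \in \mathcal{P}_1^k \Rightarrow p \in \mathcal{P}_2^F$, and runs a quadratic-twist argument with $E^{-3}$ (together with the inequality $\dim E^{-3}[3](k_p) + \dim E[3](k_p) \geq \dim E[3](k(\zeta_3)_p)$) to show $p \in \mathcal{P}_0^k \Rightarrow p \in \mathcal{P}_0^F$; the order-$8$ characterization then comes by exhaustion from the conjugacy-class count. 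Your approach is more self-contained and arguably cleaner (no external citation, no division polynomial, no twist), while the paper's is closer to how such facts are manipulated elsewhere in \cite{KMR14}. One small thing you glossed over but which is fine: $\dim_{\mathbb{F}_3} E[3](k_v)$ and $\dim_{\mathbb{F}_3} E[3]/(\sigma-1)E[3]$ (which is what the paper's Definition uses for $\mathcal{P}_i^{F,*}$) coincide with $\dim_{\mathbb{F}_3} E[3]^\sigma$ by rank--nullity, so your translation is consistent with the paper's conventions.
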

We note that because $\text{GL}_2(\mathbb{F}_3)$ has $12$ elements of order $2$ and $12$ elements of order $8$ with determinant $-1$, Chebotarev density theorem implies that approximately half of inert primes $p$ lie in $\mathcal{P}_1^k \cap \mathcal{P}_2^F$, and the other half lie in $\mathcal{P}_0^k \cap \mathcal{P}_0^F$.
\begin{proof}
    There are $24$ elements in $\text{GL}_2(\mathbb{F}_3)$ whose determinant is equal to $-1$. The $12$ elements of order $2$ form a single conjugacy class of $\text{GL}_2(\mathbb{F}_3)$, and the other $12$ elements of order $8$ form two conjugacy classes, each consisting of $6$ elements. Over the field $F = k(\zeta_3)$, we have $\text{Gal}(k(E[3])/F) = \text{SL}_2(\mathbb{F}_3)$. Apply Proposition 5.9 of \cite{KMR14} to obtain that $p \in \mathcal{P}_2^F$ if and only if $\text{Frob}_p$ has order $2$ in $\text{Gal}(k(E[3])/k)$.

    Because $\zeta_3 \not\in k$, we have $p \not\in \mathcal{P}_2^k$. We now show that $p \in \mathcal{P}_1^k$ if and only if $p \in \mathcal{P}_2^F$. Suppose $p \in \mathcal{P}_1^k$. Then the $3$-division polynomial of $E$, a degree $4$ polynomial whose roots are $x$-coordinates of the $3$-torsion points of $E$, factorizes as
    \begin{equation}
        \psi_3 = (x-a_1)(x-a_2)(x^2 + \alpha x + \beta)
    \end{equation}
    for some $a_1, a_2, \alpha, \beta \in k_p$. Exactly one of $\sqrt{a_i^3 + A a_i + B}$, which is the $y$-coordinate of $3$-torsion point, is in $k_p$. Because $k(\zeta_3)_p/k_p$ is a degree $2$ Galois extension, it follows that $\#E[3](k(\zeta_3)_p) \geq 5$. This implies that $p \in \mathcal{P}_2^F$.

    Suppose on the other hand that $\dim_{\mathbb{F}_3}(k_p) = 0$. Denote by $E^{-3}$ the quadratic twist of $E$ by $-3$. Then it follows that $\dim_{\mathbb{F}_3}E^{-3}[3](k_p) \leq 1$ because $\text{Gal}(k(E^{-3}[3])/k) = \text{GL}_2(\mathbb{F}_3)$ as well. But if $\dim_{\mathbb{F}_3} E^{-3}[3](k_p) = 1$, then we have $\dim_{\mathbb{F}_3}E^{-3}[3](k(\zeta)_p) = 2$. Because
    \begin{equation*}
\dim_{\mathbb{F}_3}E^{-3}[3](k_p) + \dim_{\mathbb{F}_3}E[3](k_p) \geq \dim_{\mathbb{F}_3}E[3](k(\zeta_3)_p),
    \end{equation*}
    we have a contradiction. Hence, $\dim_{\mathbb{F}_3}E^{-3}[3](k_p) = 0$, which implies that $p \in \mathcal{P}_0^F$.
\end{proof}

We can now prove an analogue of Lemma \ref{lem:split_prime_change} for inert primes.
\begin{lemma} \label{lem:inert_prime_change}
    Let $\mathfrak{d} \in \mathcal{D}$ and $\omega \in \Omega_\mathfrak{d}$. Let $p \in \mathcal{P}_j^k$, $j = 0, 1, 2$ be a prime of $k$ that is inert in $F$. Suppose that $F = k(\zeta_3)$. Then for any $\omega' \in \eta_{\mathfrak{d},p}^{-1}(\omega) \in \Omega_{\mathfrak{d}p}$,
    \begin{equation}
        \dim_{\mathbb{F}_3} \Sel(E[3],\omega')_F - \dim_{\mathbb{F}_3} \Sel(E[3],\omega)_F = \begin{cases}
            0 &\text{ if } p \in \mathcal{P}_0^k \\
            2 &\text{ if } p \in \mathcal{P}_1^k \text{ and } t_\omega(p) = 0 \\
            -2 &\text{ if } p \in \mathcal{P}_1^k \text{ and } t_\omega(p) = 1.
        \end{cases}
    \end{equation}
\end{lemma}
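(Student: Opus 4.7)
The plan is to mirror the proof of Lemma~\ref{lem:split_prime_change}, adapting the two-prime split analysis to the single-prime inert case where $\mathfrak{p}$ denotes the unique prime of $F$ above $p$. The first step is to invoke Lemma~\ref{lem:inert_Pis}: since $F = k(\zeta_3)$, an inert $p$ has $\Frob_p$ of determinant $-1$ in $\mathrm{GL}_2(\mathbb{F}_3)$ and hence order $2$ or $8$, corresponding to $p \in \mathcal{P}_1^k$ and $p \in \mathcal{P}_0^k$ respectively. So $p \notin \mathcal{P}_2^k$, and only the cases $j = 0$ and $j = 1$ need to be treated; this explains the absence of a $j = 2$ branch in the statement.

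For $j = 0$, Lemma~\ref{lem:inert_Pis} further gives $p \in \mathcal{P}_0^F$, so $E[3](F_\mathfrak{p}) = 0$. Example~\ref{ex:inert} then yields no ramified Lagrangian subspaces of $H^1(F_\mathfrak{p}, E[3])$, forcing $\mathcal{H}_{\omega_\mathfrak{p}} = \mathcal{H}_{\omega'_\mathfrak{p}}$, and the two Selmer groups coincide.

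For $j = 1$, Lemma~\ref{lem:inert_Pis} gives $p \in \mathcal{P}_2^F$, so $H^1(F_\mathfrak{p}, E[3])$ is $4$-dimensional, and Example~\ref{ex:inert} (with $i = 1$) yields a unique ramified Lagrangian $R$ realized by a local Kummer map. Consequently $\mathcal{H}_{\omega'_\mathfrak{p}} = R$ for every ramified $\omega'_\mathfrak{p}$. Following Lemma~\ref{lem:split_prime_change}, I would derive the single-prime analogue of \eqref{eqn:ses_Selmer_comparison}:
\begin{align*}
0 \to \Sel(E[3], \omega)_{F, (\mathfrak{p})} \to \Sel(E[3], \omega^\sharp)_F \to L \cap \mathcal{H}_{\omega^\sharp_\mathfrak{p}} \to 0,
\end{align*}
for $\omega^\sharp \in \{\omega, \omega'\}$, with $L := \text{loc}_\mathfrak{p}(\Sel(E[3], \omega)_F^{(\mathfrak{p})})$ a Lagrangian in $H^1(F_\mathfrak{p}, E[3])$ by \cite[Theorem 3.9]{KMR13}. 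The claim then reduces to computing $\dim(L \cap U)$ and $\dim(L \cap R)$, where $U = H^1_{ur}(F_\mathfrak{p}, E[3])$.

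The main obstacle is to translate $t_\omega(p)$ into these intersection dimensions. Since $\Frob_p$ has order $2$ with a $1$-dimensional fixed subspace $V^+$ and a $1$-dimensional anti-fixed subspace $V^-$ on $E[3]$, the corestriction $\text{cor}_{F_\mathfrak{p}/k_p} : U \to H^1_{ur}(k_p, E[3])$ is surjective with kernel $V^-$. Combining the compatibility $\text{loc}_p \circ \text{cor}_{F/k} = \text{cor}_{F_\mathfrak{p}/k_p} \circ \text{loc}_\mathfrak{p}$ with the $\mathrm{Gal}(F_\mathfrak{p}/k_p)$-equivariance of $L$ (which decomposes $L = L^+ \oplus L^-$ under $\Frob_p$), together with a parity argument coming from Poitou--Tate duality on the Lagrangian intersections, should pin down $\dim(L \cap U) = 2t_\omega(p)$ and $\dim(L \cap R) = 2 - 2t_\omega(p)$. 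Subtracting yields the claimed dimension change of $+2$ when $t_\omega(p) = 0$ and $-2$ when $t_\omega(p) = 1$.
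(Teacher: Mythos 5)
Your proposal follows the same structural blueprint as the paper's proof: invoke Lemma~\ref{lem:inert_Pis} to restrict attention to $j=0,1$, dispatch $j=0$ trivially, and for $j=1$ set up the single-prime analogue of the short exact sequences from \eqref{eqn:ses_Selmer_comparison} and reduce to computing $\dim(L\cap U)$ and $\dim(L\cap R)$. This matches the paper, which uses exactly these two short exact sequences and Lemma~\ref{lem:inert_Pis}.

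Where you diverge is in how the two intersection dimensions are pinned down. The paper first uses Shapiro's lemma to identify $\mathcal{H}_{\omega_p}\cong \mathrm{Ind}_{\{e\}}^{\mathrm{Gal}(F_p/k_p)}H^1_{ur}(k_p,E[3])$ and $\mathcal{H}_{\omega'_p}$ as an analogous induction of the ramified quotient, and then transports the analysis to $k$ via the corestriction isomorphism, applying the proof of \cite[Proposition 7.2]{KMR14} to get $\dim(L\cap U)=2t_\omega(p)$ and $\dim(L\cap R)=2-2t_\omega(p)$. You instead propose working on the $F_p$ side directly, decomposing $L$ into $\pm$-eigenspaces for $\mathrm{Frob}_p$ and using the fact that $\mathrm{cor}_{F_p/k_p}$ kills the $(-1)$-eigenspace of $E[3]$, plus an unspecified ``parity argument coming from Poitou--Tate duality.'' That is a reasonable sketch, and the eigenspace bookkeeping you describe is compatible with the fact that the local Tate pairing anti-commutes with $\mathrm{Frob}_p$ (since $\mu_3\not\subset k_p$), which indeed forces $\dim L^+ = \dim L^-$. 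However, the step you actually need --- that $\dim(L\cap U)^+=\dim(L\cap U)^-$, so $\dim(L\cap U)=2t_\omega(p)$ --- does not follow from the Lagrangian-eigenspace balance of $L$ alone; some version of the corestriction-transfer argument is required to relate $L\cap U$ to the Selmer structure over $k$. As written, your ``should pin down'' is a genuine gap: to make this rigorous you would either need to do what the paper does (transport everything to $k$ via $\mathrm{cor}_{F/k}$ and invoke \cite[Prop.\ 7.2]{KMR14}), or supply the linear-algebra argument that the eigenspace decomposition of $L\cap U$ is balanced, which requires more than Poitou--Tate duality of the Lagrangians.
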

\begin{proof}
    The strategy of the proof is similar to the proof of Lemma \ref{lem:split_prime_change}. We have the following two short exact sequences:
    \begin{align} \label{eqn:ses_Selmer_comparison_2}
        \begin{split}
            0 \to \Sel(E[3],\omega)_{F,(p)} \to &\Sel(E[3],\omega)_F \to \text{loc}_p (\Sel(E[3],\omega)_F^{(p)}) \cap \mathcal{H}_{\omega_p} \to 0, \\
            0 \to \Sel(E[3], \omega)_{F,(p)} \to &\Sel(E[3],\omega')_F \to \text{loc}_p(\Sel(E[3],\omega)_F^{(p)}) \cap \mathcal{H}_{\omega'_{p}} \to 0.
        \end{split}
    \end{align}
    By Shapiro's lemma,
    \begin{equation*}
        \mathcal{H}_{\omega_p} = H^1_{ur}(F_p,E[3]) \cong H^1_{ur}(k_p, \text{Ind}_{\text{Gal}(\overline{k_p}/F_p)}^{\text{Gal}(\overline{k_p}/k_p)} E[3]) \cong \text{Ind}_{\{e\}}^{\text{Gal}(F_p/k_p)} (H^1_{ur}(k_p,E[3])).
    \end{equation*}
    Indeed this is the case as shown in Lemma \ref{lem:inert_Pis}, where we have
    \begin{equation*}
        \dim_{\mathbb{F}_3} \mathcal{H}_{\omega_p} = 2 \cdot \dim_{\mathbb{F}_3} H^1_{ur}(k_p, E[3]).
    \end{equation*}
    Likewise, we have
    \begin{equation}
        \mathcal{H}_{\omega'_p} = \text{Ind}_{\{e\}}^{\text{Gal}(F_p/k_p)} \left( \frac{H^1(k_p,E[3])}{H^1_{ur}(k_p,E[3])} \right).
    \end{equation}
    We can apply the proof of Proposition 7.2 of \cite{KMR14} to obtain
    \begin{equation}
        \dim_{\mathbb{F}_3} \text{loc}_p(\Sel(E[3],\omega)_F^{(p)}) \cap \mathcal{H}_{\omega_p} = 2 \cdot t_\omega(p),
    \end{equation}
    and if $p \in \mathcal{P}_1^k$, then
    \begin{equation}
        \dim_{\mathbb{F}_3} \text{loc}_p(\Sel(E[3],\omega)_F^{(p)}) \cap \mathcal{H}_{\omega'_p} = 2 - 2 \cdot t_\omega(p).
    \end{equation}
    Combine both equations to obtain the desired statement of the lemma.
\end{proof}

\begin{remark}
    The significance of Lemma \ref{lem:split_prime_change} and Lemma \ref{lem:inert_prime_change} is that the parity of $\Sel(E[3],\omega)$ gets preserved regardless of consecutively changing the local conditions at primes $p$ of $k$.
\end{remark}

We now obtain Chebotarev statements that determine the values of $t_\omega(p)$ for each case of families of $S_3$-cubics over $k$. Hence, we will make the choice of $\mathcal{D}^*$ and the according set of primes $\mathcal{P}_j^{k,*}$ in accordance to the families of $S_3$-cubics corresponding to cases (A), (B), and (C). An analogue of Proposition 9.4 of \cite{KMR14} can be written as follows.

\begin{proposition}
    Let $\mathcal{D}^* = \mathcal{D}^A, \mathcal{D}^B,$ or $\mathcal{D}^C$. Fix $\mathfrak{d} \in \mathcal{D}^*$ and $\omega \in \Omega_{\mathfrak{d}}$. Denote by $r$ the quantity
    \begin{equation}
        r_\omega := \begin{cases}
            \frac{1}{2} \dim_{\mathbb{F}_3} \Sel(E[3],\omega)_F &\text{ if } \dim_{\mathbb{F}_3} \Sel(E[3],\omega) \equiv 0 \text{ mod } 2\\
            \frac{1}{2} (\dim_{\mathbb{F}_3} \Sel(E[3],\omega)_F-1) &\text{ if } \dim_{\mathbb{F}_3} \Sel(E[3],\omega) \equiv 1 \text{ mod } 2.\\
        \end{cases}
    \end{equation}
    Let $c_{i,j}(r_\omega)$ be given by the entries in the following table: 
    \begin{center}
        \begin{tabular}{c||c|c|c}
             & $j=0$ & $j=1$ & $j=2$\\ \hline \hline
            $i=1$ & $3^{-r_\omega}$ & $1-3^{-r_\omega}$& 0 \\
            $i=2$ & $3^{-2r_\omega}$ & $4(3^{-r_\omega} - 3^{-2r_\omega})$ &  $1-4 \cdot 3^{-r_\omega} + 3^{1-2r_\omega}$
        \end{tabular}
    \end{center}
        Then for $i=2$ and $j=0,1,2$, regardless of whether $p$ splits over $F/k$, or remains inert over $F = k(\zeta_3)/k$,
        \begin{equation}
        \lim_{X \to \infty} \frac{\#\{p \in \mathcal{P}_i^{k,*}(X) \mid p \nmid \mathfrak{d}, t_\omega(p)=j\}}{\#\{p \in \mathcal{P}_i^{k,*}(X) \mid p \nmid \mathfrak{d}\}} = c_{i,j}(r_\omega).
        \end{equation}
        Furthermore, if $\mathcal{L}$ is a function satisfying Theorem 8.1 of \cite{KMR14}, then for every $Y > {\bf N}\mathfrak{d}$ and every $X > \mathcal{L}(Y)$ we have 
        \begin{equation}
            \left| \frac{\#\{p \in \mathcal{P}_i^{k,*}(X) \mid p \nmid \mathfrak{d}, t_\omega(p)=j\}}{\#\{p \in \mathcal{P}_i^{k,*}(X) \mid p \nmid \mathfrak{d}\}} - c_{i,j}(r_\omega) \right| \leq \frac{1}{Y}.
        \end{equation}
        Finally, if it also the case $3 \mid [K(E[3]) : K]$, then the two equations are also true for $i=1$ and $j=0,1,2$.
\end{proposition}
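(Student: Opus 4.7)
The plan is to adapt the proof of Proposition 9.4 of \cite{KMR14} to the corestriction setup. The key tool is the effective Chebotarev density theorem (Theorem \ref{thrm:KMR8.1:effectivecheb}) applied to the Galois extension $F_{\mathfrak{d},\omega}/k$, which is abelian of exponent $3$ over $F(E[3])$ and unramified outside $\Sigma(\mathfrak{d})$. By Proposition \ref{prop:KMR9.3} (adapted to the base field $F$), evaluation at Frobenius gives an isomorphism between $\Sel(E[3],\omega)_F$ and $\mathrm{Hom}_{\mathrm{Gal}(F(E[3])/F)}(\mathrm{Gal}(F_{\mathfrak{d},\omega}/F(E[3])),\,E[3])$, with $\mathrm{Gal}(F_{\mathfrak{d},\omega}/F(E[3])) \cong E[3]^s$ for $s := \dim_{\mathbb{F}_3}\Sel(E[3],\omega)_F$. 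The distribution of $t_\omega(p)$ should then be read off from the distribution of Frobenius in this group.

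Next, I would separate the primes $p \in \mathcal{P}_i^{k,*}$ by their splitting behavior in $F/k$. For split primes $p\mathcal{O}_F = \mathfrak{p}_0\mathfrak{p}_1$ (covering $\mathcal{D}^A$, $\mathcal{D}^C$, and the split part of $\mathcal{D}^B$), Lemma \ref{lem:corestrictions} and the derived identity $\mathrm{loc}_p \mathrm{cor}_{F/k}(c) = 2\,\mathrm{loc}_{\mathfrak{p}_0}(c)$ reduce the computation of $t_\omega(p)$ to the dimension of the image of $\mathrm{loc}_{\mathfrak{p}_0}$ restricted to the corestriction-image. The fibers of $\mathrm{cor}_{F/k}$ are sorted by the $\mathrm{Gal}(F/k)$-action, which is controlled by Frobenius at $p$ in the subextension of $F_{\mathfrak{d},\omega}/k$ fixing $F$. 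For inert primes in Case \textbf{(B)} with $F=k(\zeta_3)$, Lemma \ref{lem:inert_Pis} matches $\mathcal{P}_1^k \cap \mathcal{P}_2^F$ with the order-$2$ Frobenius class in $\mathrm{GL}_2(\mathbb{F}_3)$ and $\mathcal{P}_0^k \cap \mathcal{P}_0^F$ with the order-$8$ class; Shapiro's lemma then translates the local picture $H^1_{ur}(F_\mathfrak{p},E[3])$ into a Frobenius computation over $F$ yielding the same table.

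In both cases, once the effective rank $r_\omega$ is identified, the count reduces to the linear-algebra fact that a uniformly random $\mathbb{F}_3$-linear map $\mathbb{F}_3^{r_\omega} \to \mathbb{F}_3^i$ has image of dimension $j$ with probability precisely $c_{i,j}(r_\omega)$ (with $c_{2,2}$ arising from inclusion-exclusion over the four $\mathbb{F}_3$-lines in $\mathbb{F}_3^2$). The effective bound $1/Y$ follows from Theorem \ref{thrm:KMR8.1:effectivecheb} applied to $F_{\mathfrak{d},\omega}/k$ with $S'$ the Frobenius set for $p \in \mathcal{P}_i^{k,*}$ having the prescribed $t_\omega(p)$ and $S$ the Frobenius set for all of $\mathcal{P}_i^{k,*}$. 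The hypothesis $3 \mid [k(E[3]):k]$ in the $i=1$ statement is required so that the relevant Frobenius set is nonempty in $\mathrm{Gal}(F_{\mathfrak{d},\omega}/k)$.

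The main obstacle will be justifying that the effective rank in the formula is $r_\omega = \lfloor s/2 \rfloor$ rather than $s$. This requires computing $\dim_{\mathbb{F}_3} \mathrm{cor}_{F/k}(\Sel(E[3],\omega)_F)$ via the $\tau_2$-eigenspace decomposition of $\Sel(E[3],\omega)_F$: because $\mathrm{res}_{F/k} \circ \mathrm{cor}_{F/k} = 1+\tau_2$ and $\mathrm{res}_{F/k}$ is injective on $H^1(k,E[3])$ (as $\mathrm{cor} \circ \mathrm{res} = 2$ is a unit in $\mathbb{F}_3$), the kernel of $\mathrm{cor}_{F/k}$ on $\Sel(E[3],\omega)_F$ coincides with the $(-1)$-eigenspace of $\tau_2$, so the corestricted Selmer group has dimension equal to that of the $(+1)$-eigenspace. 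Showing that the $\pm 1$-eigenspaces are balanced up to a discrepancy of at most one (so that the $(+1)$-eigenspace has dimension exactly $\lfloor s/2 \rfloor$) is the delicate point, drawing on the admissibility of $F/k$, the linear disjointness of $F$ and $k(E[3])$, and the structure of local characters in $\Omega_\mathfrak{d}$; the parity-based definition of $r_\omega$ reflects exactly this at-most-one asymmetry.
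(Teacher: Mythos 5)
Your outline and the paper's proof agree on the broad Chebotarev framework — both invoke the effective equidistribution result (Theorem \ref{thrm:KMR8.1:effectivecheb}, i.e. Theorem 8.1 of \cite{KMR14}) and reduce the count to a Frobenius calculation in a Galois extension over $k(E[3])$ cut out by the corestricted Selmer group — but they diverge exactly at the step you flag as ``the delicate point,'' and your route leaves that step genuinely open while the paper closes it by an entirely different device. The paper does not argue via a $\tau_2$-eigenspace decomposition of $\Sel(E[3],\omega)_F$ at all. Instead it splits by parity of $s := \dim_{\mathbb{F}_3}\Sel(E[3],\omega)_F$: when $s$ is even it takes $\hat{k}_{\mathfrak{d},\omega} = k_{\mathfrak{d},\omega}$ as in \eqref{eq:Selmerfieldk} and applies Proposition 9.4 of \cite{KMR14} directly; when $s$ is odd it uses Lemmas \ref{lem:split_prime_change} and \ref{lem:inert_prime_change} to produce a distinguished class $c^*$ lying in the strict Selmer group $\Sel(E[3],\omega)_{F,(\mathfrak{d})}$ for every $\omega$, observes that $c^*$ localizes trivially at the new primes, defines $\hat{k}_{\mathfrak{d},\omega}$ to be the index-$9$ subfield of $k_{\mathfrak{d},\omega}$ obtained by discarding $c^*$, and then applies Proposition 9.4 there. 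In both cases the final ingredient is the observation that $\hat{k}_{\mathfrak{d},\omega}\cap FH_3 = k$, which is what licenses restricting Frobenius to the case-specific prime sets $\mathcal{P}_i^{k,*}$; your outline never verifies an analogous disjointness, which is needed precisely because the primes in $\mathcal{D}^A, \mathcal{D}^B, \mathcal{D}^C$ carry extra Frobenius constraints.

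Beyond the different route, there is a substantive tension in your argument that you should resolve before pursuing it. You assert that $\ker\bigl(\mathrm{cor}_{F/k}|_{\Sel(E[3],\omega)_F}\bigr)$ is the $(-1)$-eigenspace of $\tau_2$ and that the corestricted Selmer group therefore has dimension equal to that of the $(+1)$-eigenspace. But the paper's surrounding lemmas take the opposite view: the proof of Lemma \ref{lem:corestrictions} and the lemma computing $\mathrm{Gal}(k_{\mathfrak{d},\omega}/k)$ both assert that $\mathrm{cor}_{F/k}$ is an isomorphism on $\Sel(E[3],\omega)_F$, giving $\mathrm{Gal}(k_{\mathfrak{d},\omega}/k(E[3]))\cong E[3]^{s}$ (not $E[3]^{\lfloor s/2\rfloor}$). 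You cannot have both your eigenspace kernel and the paper's injectivity of $\mathrm{cor}_{F/k}$; one of these must be dropped and the other justified, and the source of the factor $\tfrac{1}{2}$ in $r_\omega$ has to be accounted for consistently with whichever you keep. In particular ``showing that the $\pm 1$-eigenspaces are balanced up to a discrepancy of at most one'' is not obviously true (there is no a priori reason for $\Sel(E[3],\omega)_F$ to be a nearly self-dual $\tau_2$-module), and you give no argument for it; the paper instead extracts the parity constraint from the $\pm 2$ jumps in Lemmas \ref{lem:split_prime_change} and \ref{lem:inert_prime_change}, which is the mechanism that produces $c^*$ and hence the correct effective rank. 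As written, your proposal identifies the right point of difficulty but neither proves it nor connects it to the $c^*$/relaxed-Selmer mechanism the paper actually uses.
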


\begin{proof}
    Suppose that $\dim_{\mathbb{F}_3} \Sel(E[3],\omega)_F \equiv 0 \text{ mod } 2$. Denote by $\hat{k}_{\mathfrak{d},\omega}$ the Galois extension over $k$ as follows:
    \begin{equation}
        \hat{k}_{\mathfrak{d},\omega} := \text{Fixed field of } \bigcap_{c \in \text{cor}_{F/k}\Sel(E[3], \omega)_F} \text{Ker}(c: \text{Gal}(\overline{k}/k(E[3])) \to E[3]).
    \end{equation}
    Note that $\hat{k}_{\mathfrak{d},\omega} = k_{\mathfrak{d},\omega}$.
    Apply Proposition 9.4 of \cite{KMR14} with the Galois extension $\hat{k}_{\mathfrak{d},\omega}$ to obtain the desired statement.

    Suppose that $\dim_{\mathbb{F}_3} \Sel(E[3],\omega)_F \equiv 1 \text{ mod } 2$. Then Lemma \ref{lem:split_prime_change} and Lemma \ref{lem:inert_prime_change} show that there exists a distinguished non-trivial element $c^*$ which lies in any $\Sel(E[3],\omega)_{F,(\mathfrak{d})}$ for any $\omega \in \Omega_{\mathfrak{d}}$. Note that $c^*$ maps to a trivial element in $H^1_{ur}(k_p,E[3])$ under the localization map $\text{loc}_\mathfrak{p}$ for any $\mathfrak{p} \mid \mathfrak{d}$. This prompts us to define $\hat{k}_{\mathfrak{d},\omega}$ differently from the previous case as follows:
    \begin{equation}
        \hat{k}_{\mathfrak{d},\omega} := \text{Fixed field of } \bigcap_{\substack{c \in \text{cor}_{F/k}\Sel(E[3], \omega)_F \\ c \not\in \langle c^* \rangle}} \text{Ker}(c: \text{Gal}(\overline{k}/k(E[3])) \to E[3]).
    \end{equation}
    Note that $\hat{k}_{\mathfrak{d},\omega}$ is an index $9$ subfield of $k_{\mathfrak{d},\omega}$.
    Then $\text{Gal}(\hat{k}_{\mathfrak{d},\omega}/k(E[3])) \cong E[3]^{\dim_{\mathbb{F}_3} \Sel(E[3],\omega)_F - 1}$. Apply Proposition 9.4 of \cite{KMR14} with the Galois extension $\hat{k}_{\mathfrak{d},\omega}$ to obtain the desired statement. 
    
    We note that in both cases, the field $\hat{k}_{\mathfrak{d},\omega} \cap FH_3$ is equal to $k$, because all the primes $p \in \mathcal{P}_i^{k,*}$ satisfy the condition that $\text{Frob}_p(FH_3/k) = 1$.
\end{proof}

Applying Proposition 9.5 of \cite{KMR14}, we finally obtain the Markov operator governing the dimensions of $\Sel(E[3],\omega)_F$.
\begin{definition}
    We denote by $M_L = [m_{r,s}]$ the \textbf{alternating} $\mod 3$ Lagrangian Markov operator over the state space $$\left\{
\text{maps}~ f: \mathbb{Z}_{\geq 0} \to \mathbb{R} ~\Big|~ ||f||:= \sum_{s \in \mathbb{Z}_{\geq 0}} |f(s)| ~ \text{converges} 
\right\}$$
defined as
    $$m_{r,s} = \begin{cases}
    1 - 3^{-\frac{r}{2}} \quad \text{if}~ s=r-2 \geq 0, r \equiv 0 \text{ mod } 2 \\
    1 - 3^{-\frac{r-1}{2}} \quad \text{if}~ s=r-2 \geq 0, r \equiv 1 \text{ mod } 2 \\
    3^{-\frac{r}{2}} \quad \text{if}~ s=r+2 \geq 1, r \equiv 0 \text{ mod } 2 \\
    3^{-\frac{r-1}{2}} \quad \text{if}~ s=r+2 \geq 1, r \equiv 1 \text{ mod } 2 \\
    0  \quad \text{otherwise}.
\end{cases}$$
\end{definition}

\begin{proposition}
    Let $\mathcal{D}^* = \mathcal{D}^A, \mathcal{D}^B,$ or $\mathcal{D}^C$. For every subset $S \subset \Omega_1$, the \textbf{alternating} $\mod 3$ Lagrangian Markov operators $M_L = [m_{r,s}]$ governs the rank data $\Omega^S$ on $\mathcal{D}^*$ in that 
\begin{equation}
m^{(i)}_{\mathrm{rk}(\omega),s} = \frac{\sum_{q \in \mathcal{P}_i^{k,*}(X)-\delta}\left| \left\{ \chi \in \eta^{-1}_{\delta,q}(\omega) : \mathrm{rk}(\chi)=s\right\}\right|}{\sum_{q \in \mathcal{P}_i^{k,*}(X)-\delta}\left| \left\{ \eta^{-1}_{\delta,q}(\omega)\right\}\right|} 
\end{equation}
where $M^i= [m_{r,s}^{(i)}]$.

Moreover, every function $\mathcal{L}$ satisfying Theorem 8.1 of \cite{KMR14} is a convergence rate for $(\Omega^S, M_L)$.
\end{proposition}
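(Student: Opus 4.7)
The plan is to follow the strategy of Proposition 9.5 of \cite{KMR14} line by line, substituting the results of this subsection for their KMR14 counterparts. The essential twist is that in our setting every nontrivial rank transition has size $\pm 2$ (never $\pm 1$), which is what makes $M_L$ take its ``alternating'' form; this in turn reflects that we work with $\Sel(E[3],\omega)_F$ rather than $\Sel(E[3],\omega)_k$, so that (via Proposition \ref{prop:torsion}) the relevant Lagrangian subspace at a split prime has ``doubled'' dimension.

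Concretely, fix $\mathfrak{d} \in \mathcal{D}^*$, $\omega \in \Omega_\mathfrak{d}$, and set $r := \mathrm{rk}(\omega)$. For a prime $q \in \mathcal{P}_1^{k,*}(X) - \mathfrak{d}$ and each of the six preimages $\omega' \in \eta_{\mathfrak{d},q}^{-1}(\omega)$, Remark \ref{remark:local-conditions-Kummer} (applied with $i = 1$) asserts that there is a \emph{unique} ramified coordinate-wise Lagrangian subspace at $q$, so all six preimages induce the same local condition and hence share a common value of $\mathrm{rk}(\omega')$. By Lemma \ref{lem:split_prime_change} (if $q$ splits in $F$) or Lemma \ref{lem:inert_prime_change} (if $q$ is inert), this shared value is $r + 2$ when $t_\omega(q) = 0$ and $r - 2$ when $t_\omega(q) = 1$.

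The previous proposition then supplies the needed Chebotarev densities: with $r_\omega = \lfloor r/2 \rfloor$, the density of $q \in \mathcal{P}_1^{k,*}(X) - \mathfrak{d}$ with $t_\omega(q) = 0$ tends to $c_{1,0}(r_\omega) = 3^{-r_\omega}$ and with $t_\omega(q) = 1$ tends to $c_{1,1}(r_\omega) = 1 - 3^{-r_\omega}$. Matching these against the definition of $M_L$ (whose case split on the parity of $r$ exactly absorbs the floor in $r_\omega$) yields $m^{(1)}_{r, r \pm 2} \to m_{r, r \pm 2}$ as $X \to \infty$. The effective Chebotarev theorem (Theorem \ref{thrm:KMR8.1:effectivecheb}) underlying the previous proposition then transfers directly to the claimed convergence rate in terms of any valid $\mathcal{L}$.

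The main (mild) obstacle is the bookkeeping required to match the six ramified characters $\omega_q' \in \mathcal{C}_{\mathrm{ram}}(k_q)$ with the unique ramified local Selmer condition $H^1_f(k_q, B_{K/k}[1-\sigma_K])$ at $q$, and to confirm that the field $\hat{k}_{\mathfrak{d},\omega}$ introduced in the previous proposition is linearly disjoint from $FH_3$ (so that the Chebotarev condition defining $\mathcal{D}^*$ does not obstruct the density). Both are already explicit in the preceding material, and invoking them completes the argument.
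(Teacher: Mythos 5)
The paper gives no explicit proof of this proposition; it simply cites Proposition 9.5 of \cite{KMR14}. Your plan — to run the KMR14 argument with the inputs supplied by Lemmas \ref{lem:split_prime_change} and \ref{lem:inert_prime_change} and the analogue of KMR14's Proposition 9.4 — is the right one, and your handling of the $i=1$ transitions is correct. In particular you correctly observe that for $q \in \mathcal{P}_1^{k,*}$ all six ramified preimages land on the unique ramified Lagrangian, that both the split and inert variants of the lemma give transitions $r \mapsto r \pm 2$, and that the parity split in the definition of $M_L$ absorbs the floor $r_\omega = \lfloor r/2 \rfloor$, so $m^{(1)}_{r,s}$ matches $(M_L)_{r,s}$ against $c_{1,0}(r_\omega)$ and $c_{1,1}(r_\omega)$.

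However, your argument only verifies $i=1$, and that is not enough to establish the proposition. The identity $m^{(i)}_{\mathrm{rk}(\omega),s} = (M_L^i)_{\mathrm{rk}(\omega),s}$ must hold for $i=2$ as well (indeed, the analogue of Proposition 9.4 is stated unconditionally precisely for $i=2$, and the fan $\mathcal{D}^*_{m,w,X}$ is built from primes in $\mathcal{P}_1^{k,*} \cup \mathcal{P}_2^{k,*}$). For a split prime $q \in \mathcal{P}_2^{k,*}$ the six ramified characters no longer share a common Lagrangian: Lemma \ref{lem:split_prime_change}, Case $j=2$, together with Remark \ref{remark:local-conditions-Kummer}, gives three admissible ramified Lagrangians (two characters each), and when $t_\omega(q)=0$ exactly two of the six characters push the rank to $r+4$ while the other four leave it unchanged; when $t_\omega(q)=1$ all six leave it unchanged; when $t_\omega(q)=2$ all six push to $r-4$. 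Matching these against $M_L^2$ then requires combining the weights $c_{2,j}(r_\omega)$ with the $2/6, 4/6$ character split — e.g., one must check that the probability of $r \mapsto r+4$ is $\tfrac{1}{3} c_{2,0}(r_\omega) = 3^{-2r_\omega - 1}$ and that this equals $(M_L^2)_{r,r+4} = 3^{-r_\omega}\cdot 3^{-r_\omega - 1}$, and similarly for $r \mapsto r$ and $r \mapsto r-4$. These identities do hold, but they are a genuinely different computation from the $i=1$ case and cannot be dismissed under the blanket ``every nontrivial rank transition has size $\pm 2$,'' since the raw $\mathcal{P}_2$ transitions are $\pm 4$ or $0$; the $\pm 2$ structure is a feature of the one-step operator $M_L$, not of a $\mathcal{P}_2$-step. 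You should make the $i=2$ verification explicit (or at least cite the portion of the KMR14 proof that handles the analogous $\mathcal{P}_2$ sub-split) to close the argument.
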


The stationary distribution of the alternating mod $p$ Lagrangian Markov operator $M_L$ can be computed from applying Proposition 2.4 of \cite{KMR14}.

\begin{proposition} \label{prop:equidist-ML}
    Let $f: \mathbb{Z}_{\geq 0} \to [0,1]$ be a map which lies in the set 
    $$\left\{
\text{maps}~ f: \mathbb{Z}_{\geq 0} \to \mathbb{R} ~\Big|~ ||f||:= \sum_{s \in \mathbb{Z}_{\geq 0}} |f(s)| ~ \text{converges}
\right\}.$$
    Denote by $\rho(f)$ the quantity
    $$\rho(f) := \sum_{s \in \mathbb{Z}_{\geq 0, \text{even}}} |f(s)|.$$ Then
    \begin{equation}
        \lim_{k \to \infty} M_L^k(f) = \rho(f) \cdot \mathbf{E}_{even} + (1-\rho(f)) \cdot \mathbf{E}_{odd},
    \end{equation}
    where
    \begin{align*}
        \mathbf{E}_{even}(s) &:= \begin{cases}
            \prod_{k=0}^\infty \frac{1}{1 + 3^{-k}} \cdot \prod_{k=1}^{\frac{s}{2}} \frac{3}{3^k - 1} &\text{ if } s \equiv 0 \mod 2 \\
            0 &\text{ otherwise}
        \end{cases} \\
        \mathbf{E}_{odd}(s) &:= \begin{cases}
            \prod_{k=0}^\infty \frac{1}{1 + 3^{-k}} \cdot \prod_{k=1}^{\frac{s-1}{2}} \frac{3}{3^k - 1} &\text{ if } s \equiv 1 \mod 2 \\
            0 &\text{ otherwise}
        \end{cases}
    \end{align*}
\end{proposition}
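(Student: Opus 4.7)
The plan is to reduce this to Proposition 2.4 of \cite{KMR14} by exploiting a parity-based decomposition of $M_L$. The key observation is that the only nonzero matrix entries $m_{r,s}$ occur for $s = r \pm 2$, so parity of the state is conserved: the even states $E = \{0,2,4,\ldots\}$ and odd states $O = \{1,3,5,\ldots\}$ form invariant sub-chains. Writing $f = f_e + f_o$ with $f_e = f|_E$ and $f_o = f|_O$, I will have $M_L^k(f) = M_L^k(f_e) + M_L^k(f_o)$ for every $k$, with $M_L^k(f_e)$ supported on $E$ and $M_L^k(f_o)$ supported on $O$. It therefore suffices to analyze each restriction separately, noting that $\|f_e\| = \rho(f)$ and $\|f_o\| = \|f\| - \rho(f)$.

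Next I would identify each restriction, under the natural relabeling of its state space with $\mathbb{Z}_{\geq 0}$, with the standard mod-$3$ Lagrangian Markov operator of \cite{KMR14}. Under the bijection $E \to \mathbb{Z}_{\geq 0}$ given by $2k \mapsto k$, the probabilities $1 - 3^{-k}$ from $2k$ to $2k-2$ and $3^{-k}$ from $2k$ to $2k+2$ translate to $k \to k-1$ and $k \to k+1$ with the same probabilities, which are precisely the transition laws of the standard operator in Proposition 9.5 (equivalently, Proposition 2.4) of \cite{KMR14}. The analogous bijection $2k+1 \mapsto k$ accomplishes the same identification for the odd chain, and the boundary behavior matches on both sides (from the minimal state, each chain moves upward deterministically with probability $3^{-0} = 1$). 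Hence $M_L|_E$ and $M_L|_O$ are each conjugate to the standard Markov operator.

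With this identification, I would invoke Proposition 2.4 of \cite{KMR14} to obtain a unique stationary probability distribution $\pi$ for the standard operator, such that the iterates of any summable nonnegative $g$ converge in $\|\cdot\|$ to $\|g\|\cdot\pi$. A short detailed-balance computation, which I would relegate to a one-line remark, yields the closed form
\begin{equation*}
\pi(n) = \prod_{j=0}^\infty \frac{1}{1 + 3^{-j}} \cdot \prod_{k=1}^{n} \frac{3}{3^k - 1},
\end{equation*}
which under $n \leftrightarrow 2n$ matches $\mathbf{E}_{even}$ on the even states, and under $n \leftrightarrow 2n+1$ matches $\mathbf{E}_{odd}$ on the odd states. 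Applying the convergence to $f_e$ (mass $\rho(f)$) and $f_o$ (mass $1 - \rho(f)$) and summing then yields the stated limit. The only real obstacle is the bookkeeping: verifying that the two reindexings really carry the alternating operator onto the standard one at every state, including the boundary, and that the cited stationary distribution matches the given formulas for $\mathbf{E}_{even}$ and $\mathbf{E}_{odd}$ after relabeling. No new analytic input beyond \cite{KMR14} is needed.
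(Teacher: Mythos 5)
Your proposal is correct and takes essentially the same route as the paper, whose proof consists of a single sentence citing Proposition 2.4 of \cite{KMR14}. Your parity decomposition of the state space, the relabelings $2k \mapsto k$ and $2k+1 \mapsto k$ identifying each sub-chain with the standard mod-$3$ Lagrangian Markov operator, and the detailed-balance derivation of $\pi$ fill in exactly the reduction and stationary-distribution computation that the paper leaves implicit.
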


\begin{remark}
The infinite product above can be estimated as follows:
    $$\prod_{k=0}^\infty \frac{1}{1 + 3^{-k}} \approx 0.3195022.$$
\end{remark}

For small values of $s$, $\mathbf{E}_{even}(s)$ and $\mathbf{E}_{odd}(s)$, are approximately as follows.
\smallskip

\begin{center}
    \begin{tabular}{c||cc}
        $s$ & $\mathbf{E}_{even}(s)$ & $\mathbf{E}_{odd}(s)$ \\
        \hline \hline 
        0 & 0.319502 & 0 \\
        1 & 0 & 0.319502 \\
        2 &0.479253 & 0\\
        3 & 0& 0.479253 \\
        4 & 0.17972 & 0\\
        5 & 0 & 0.17972 \\
        6 &0.020737 & 0\\
        7 & 0 & 0.020737  \\
        8 &0.00078 & 0 \\
        9 & 0 & 0.00078\\
        10 & $0.94 \times 10^{-5} $&  0\\
    \end{tabular}
\end{center}

\section{Fan Structures Following \texorpdfstring{\cite{KMR14}}{[KMR]}}\label{sec:fans}

We begin by recalling some definitions from \cite{KMR14} which will be used this section and the following section where we prove our main results. As in the previous section, many of these definitions are stated for a general set $\mathcal{D}^*$ consisting of products of primes. In applications we'll then make a choice, setting $\mathcal{D}^*$  to be $\mathcal{D}^A, \mathcal{D}^B,$ or $\mathcal{D}^C$ corresponding to products of primes respecting cases $A,B,C$ of our parametrization for $S_3$-cubic extensions by quadratic resolvent. These dependencies on cases $A,B,C$ are important, but rather mild, so we delay the case-by-case analysis for as long as possible. 

\begin{definition}[As in Definition 10.1 of \cite{KMR14}]
    For any $\chi \in C(k) = \Hom(G_k, \mu_3)$ and $v$ a place of $k$, write $\chi_v$ for the restriction of $\chi$ to $G_{k_v}$. 
    \begin{itemize}
    \item For $\mathfrak{d} \in \mathcal{D}^*$, define
    \begin{equation}
        C(\mathfrak{d}) := \{\chi \in C(k) : \chi \text{ is unramified at all }~\mathfrak{q} \mid \mathfrak{d}~\text{and unramified outside } \Sigma(\mathfrak{d}) \cup \mathcal{P}_0^{k,*}\}.
    \end{equation}
    \item For $\mathfrak{d} \in \mathcal{D}^*$, we denote by $\eta_\mathfrak{d}: C(\mathfrak{d}) \to \Omega_\mathfrak{d}$ the product of restriction maps. In other words, we have $\eta_\mathfrak{d}(\chi) = (\chi_v)_{v \in \Sigma(\mathfrak{d})}$.
        \item For $X  > 0$, 
        \begin{equation}
            C(X) := \{\chi \in C(k) : \chi \text{ is unramified outside of } \Sigma \cup \{\mathfrak{q} : \mathbf{N}\mathfrak{q} \leq X\}\}
        \end{equation}
        \item Finally, define the intersection of the above two sets as
        \begin{equation}
            C(\mathfrak{d}, X) := C(\mathfrak{d})  \cap C(X).
        \end{equation}
    \end{itemize}
\end{definition}

\begin{proposition} [As in Proposition 10.7 of \cite{KMR14}]
    Suppose $\mathfrak{d} \in \mathcal{D}^*$, $\mathcal{L}$ is a function satisfying Theorem 8.1 of \cite{KMR14}, and $X > \mathcal{L}(\mathbf{N} \mathfrak{d})$. Then $\eta_\mathfrak{d}: C(\mathfrak{d},X) \to \Omega_\mathfrak{d}$ is surjective. Furthermore, for every $\omega \in \Omega_\mathfrak{d}$,
    \begin{equation}
        \frac{\#\{\chi \in C(\mathfrak{d},X) \; | \; \eta_\mathfrak{d}(\chi) = \omega\}}{\# \Omega_\mathfrak{d}} = \frac{1}{\# \Omega_\mathfrak{d}}.
    \end{equation}
\end{proposition}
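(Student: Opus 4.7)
The plan is to split the claim into two halves: (i) show that $\eta_\mathfrak{d}$ is surjective, and (ii) show that all non-empty fibers of $\eta_\mathfrak{d}$ have the same cardinality. Once both are in hand, the ratio in the proposition follows immediately from $\#C(\mathfrak{d},X) = \#\Omega_\mathfrak{d}\cdot \#(\ker\eta_\mathfrak{d}\cap C(\mathfrak{d},X))$.

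For surjectivity, fix $\omega = (\omega_v)_{v\in\Sigma(\mathfrak{d})}\in\Omega_\mathfrak{d}$. Since $p=3$ avoids the special case of Grunwald--Wang, there exists some global character $\chi_0\in C(k)=\Hom(G_k,\mu_3)$ with $\chi_0|_{G_{k_v}}=\omega_v$ for every $v\in\Sigma(\mathfrak{d})$. The issue is that $\chi_0$ may ramify at places outside $\Sigma(\mathfrak{d})\cup\mathcal P_0^{k,*}$ and may have the wrong ramification behavior at primes dividing $\mathfrak{d}$. The strategy is to correct $\chi_0$ by multiplying it by an auxiliary character $\psi\in C(k)$ whose local restrictions are trivial at every $v\in\Sigma(\mathfrak{d})$ (so the prescribed $\omega_v$'s are unchanged) but whose ramification at all bad places cancels that of $\chi_0$, at the cost of introducing ramification only at places in $\mathcal P_0^{k,*}$ of norm $\leq X$. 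The existence of $\psi$ is a statement about the surjectivity of an evaluation-type map on a group of ray-class characters, and the essential input is that there are enough primes in $\mathcal P_0^{k,*}(X)$ with prescribed Frobenius to cover the required characters. This is supplied by the effective Chebotarev density theorem, Theorem 8.1 of \cite{KMR14}: the function $\mathcal L$ is chosen precisely so that the set of primes available below $X>\mathcal L(\mathbf N \mathfrak{d})$ contains, for every relevant conjugacy class in the appropriate ray class field, at least one representative and, in fact, enough of them to hit every local character condition.

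For uniformity of fibers, observe that the set $K(\mathfrak{d},X):=\ker(\eta_\mathfrak{d})\cap C(\mathfrak{d},X)$ is a subgroup of $C(\mathfrak{d},X)$ and acts on $C(\mathfrak{d},X)$ by multiplication. For any $\chi\in \eta_\mathfrak{d}^{-1}(\omega)$, the fiber equals the coset $\chi\cdot K(\mathfrak{d},X)$, hence has cardinality $\#K(\mathfrak{d},X)$, independent of $\omega$. Combined with surjectivity from step (i), this yields the desired uniform ratio.

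The main obstacle is the construction of the correcting character $\psi$ in step (i): one must simultaneously kill the ramification of $\chi_0$ at unwanted primes and preserve its local behavior at $\Sigma(\mathfrak{d})$, while drawing ramification support only from $\mathcal P_0^{k,*}$ and only from primes of norm $\leq X$. This is where the quantitative input of Theorem 8.1 of \cite{KMR14} is crucial, and where the definition of $\mathcal L$ is calibrated to ensure enough ``capacity'' is present up to $X$. The analogue of this argument in the $2$-Selmer setting is carried out as Proposition 10.7 of \cite{KMR14}, and the present case requires only checking that the cases $\mathcal D^A,\mathcal D^B,\mathcal D^C$ and the ancillary fields $FH_3$ do not destroy the Chebotarev input, which is guaranteed by the admissibility hypothesis on $F/k$ in Definition \ref{def:admissible}.
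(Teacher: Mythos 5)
The paper itself gives no proof of this proposition; the bracket ``[As in Proposition 10.7 of \cite{KMR14}]'' signals a direct import of that result after substituting the present $\mathcal{D}^*$ and $\Omega_\mathfrak{d}$, and indeed the paper's later use of it (in the proof of Theorem \ref{thm:our-11.6}) simply invokes the KMR statement. Your decomposition---surjectivity via a corrected Grunwald--Wang type construction fed by the effective Chebotarev input, followed by uniformity of fibers---is exactly the structure of KMR's argument, and your surjectivity sketch is sound modulo the usual care that the correcting character can be built with ramification support in $\mathcal{P}_0^{k,*}$ below $X$, which is what $\mathcal{L}$ and Theorem \ref{thrm:KMR8.1:effectivecheb} provide.

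The fiber-uniformity step, however, contains a real gap. First, two typos in the paper matter here: the definition of $C(\mathfrak{d})$ should require $\chi$ to be \emph{ramified} (not unramified) at each $\mathfrak{q}\mid\mathfrak{d}$ (otherwise $\eta_\mathfrak{d}$ cannot possibly land in $\Omega_\mathfrak{d}$, whose $\mathfrak{d}$-components lie in $\mathcal{C}_{ram}$), and the displayed ratio's left-hand denominator should read $\#C(\mathfrak{d},X)$. With the correct ramification condition, $C(\mathfrak{d},X)$ is \emph{not} a subgroup of $C(k)$: the trivial character is unramified at all $\mathfrak{q}\mid\mathfrak{d}$, so it fails the defining condition when $\mathfrak{d}\neq 1$. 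Consequently $K(\mathfrak{d},X) := \ker(\eta_\mathfrak{d})\cap C(\mathfrak{d},X)$ is not a subgroup of $C(\mathfrak{d},X)$; in fact it is empty for $\mathfrak{d}\neq 1$, since $\chi\in\ker\eta_\mathfrak{d}$ forces $\chi_v$ trivial, hence unramified, at each $v\mid\mathfrak{d}$. So your claim that the fiber of $\omega$ is a coset of $K(\mathfrak{d},X)$ breaks down. The repair is standard: work instead with the honest group $C'(\mathfrak{d},X)$ of characters unramified outside $\Sigma(\mathfrak{d})\cup\mathcal{P}_0^{k,*}$ and with ramification supported on primes of norm $\leq X$ (no ramification condition at $\mathfrak{q}\mid\mathfrak{d}$), and extend $\eta_\mathfrak{d}$ to $C'(\mathfrak{d},X)$. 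For $\omega\in\Omega_\mathfrak{d}$, any $\chi\in C'(\mathfrak{d},X)$ with $\eta_\mathfrak{d}(\chi)=\omega$ is automatically ramified at each $\mathfrak{q}\mid\mathfrak{d}$ because $\omega_v\in\mathcal{C}_{ram}(k_v)$ there, so $\eta_\mathfrak{d}^{-1}(\omega)\cap C(\mathfrak{d},X)=\eta_\mathfrak{d}^{-1}(\omega)\cap C'(\mathfrak{d},X)$, and the latter is a coset of the genuine subgroup $\ker(\eta_\mathfrak{d})\cap C'(\mathfrak{d},X)$. All fibers then have this common cardinality, and the counting identity you wanted follows.
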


\begin{definition}[As in Definition 3.14 of \cite{KMR14}]
Let $\mathcal{L}: [1,\infty) \to [1,\infty)$ be a non-decreasing function.
Let $\{L_n(Y)\}_{n \geq 1}$ be a sequence of real valued functions defined as follows:
\begin{align*}
    L_1(Y) &:= \mathcal{L}(Y) \\
    L_{n+1}(Y) &:= \max \left\{ \mathcal{L} (\prod_{j \leq n} L_j(Y)), YL_n(Y)\right\}, \; n \geq 1.
\end{align*}
Given $m, w \in \mathbb{Z}$ and a choice of a set $\mathcal{D}^*$ of square-free products of places $v \in \mathcal{P}_1^{k,*} \cup \mathcal{P}_2^{k,*}$, define the \textit{fan} as follows
\begin{equation}
    \mathcal{D}^*_{m,w,X} := \left\{ \mathfrak{d} \in \mathcal{D}^* \; | \; \mathfrak{d} = \{q_j\}_{j=1}^m \text{ with } \mathbf{N}(q_j) < L_j(X) \; \forall j, \text{ and } \sum_{j=1}^m \dim_{\mathbb{F}_3}E[3](k_{q_j}) = w \right \}.
\end{equation}
\end{definition}

\begin{definition}[As in Definition 11.4 of \cite{KMR14}]\label{def:fan-structure}
Let $\{L_n(Y)\}_{n \geq 1}$ be a sequence of real valued functions defined as in Definition 3.14 of \cite{KMR14}. For $m,w \geq 0$, and a choice of a set of square-free products of places $\mathcal{D}^*$, we define the \emph{fan structure} on $\mathcal{C}(k)$ as 
\begin{equation}
\mathcal{B}_{m,w,\mathcal{D}^*,X} := \bigsqcup_{\mathfrak{d} \in \mathcal{D}^*_{m,w,X}} \mathcal{C}(\mathfrak{d}, \mathcal{L}(L_{m+1}(X))) \subset \mathcal{C}(k).
\end{equation}
\end{definition}
As usual we use $\mathcal{D}^*$ for a fixed choice of either $\mathcal{D}^A, \mathcal{D}^B,$ or $\mathcal{D}^C$. 
\begin{remark}\label{rmk:fan}
    The ordering imposed on the fan structure is non-standard in that the ordering on $\mathfrak{d}$ is not by norm, and the final ordering on $S_3$-cubic extensions in our theorems is not by, say, discriminant. 
    
    Under the natural ordering with $\mathbf{N}(\mathfrak{d}) \leq X$, the number of possible primes in the support of $\mathfrak{d}$ grows like $\log \log X$. Under the ordering by $\mathcal{L}$, we allow sufficiently many primes in the support of $\mathfrak{d}$ so that the number of such possible primes grow \textit{linearly} with $m$, and the upper bound on the norms of such primes are functions of $X$.
    
    The faster growth in the number of admissible $\mathfrak{d}$ allows one to actually obtain the convergence of a Markov process. Ordering $\mathfrak{d}$ by norm in this regime is \textit{not} sufficient to obtain the desired convergence.

    For example, suppose the function $\mathcal{L}$ is given by $\mathcal{L}(X) = \log X$. Then the fan $\mathcal{D}_{m,\omega,X}^*$ is a set of composite numbers $\mathfrak{d} \in \mathcal{D}^*$ whose set of distinct prime factors is $\{q_1, q_2, \cdots, q_m\}$, and each factor $q_i$ satisfies $\mathbf{N}(\mathfrak{d}) \leq X^{i-1} \log X$.
\end{remark}

We are now ready to state the theorem from which our result all follow. The theorem should be thought of as a version of Theorem 11.6 in \cite{KMR14} for which all the choice of $\mathcal{D}^*$ (in the notation of \cite{KMR14}) and all dependent structures have been further restricted to respective cases $A,B,C$ of the parameterization in Section \ref{section:S3-param}.

\begin{definition}[As in Definition 3.9 of \cite{KMR14}]\label{def:Efrakd}
    Given a square-free product of places $\mathfrak{d}$ of $k$, we denote by $E_{\mathfrak{d}}:\mathbb{Z}_{\geq 0} \to \mathbb{R}$ the probability distribution defined as
    \begin{equation}
        E_{\mathfrak{d}}(s) := \frac{\#\{\omega \in \Omega_{\mathfrak{d}} \; | \; \dim_{\mathbb{F}_3}\Sel(E[3],\omega)_F = s\}}{\# \Omega_\mathfrak{d}}.
    \end{equation}
\end{definition}

\begin{theorem} \label{thm:our-11.6}
    If $m,w,s\geq 0$ and $\bigcup_X \mathcal{D}^*_{m,w,X}$ is non-empty, then
    \begin{equation}\label{eq:ourKMR116}
    \lim_{X \to \infty} \frac{
        |\{\chi \in \mathcal{B}_{m,w,\mathcal{D}^*,X} : \dim_{\mathbb{F}_3}\Sel(E[3],\eta_\mathfrak{d}(\chi))_F = s\}|    
    }{
        |\mathcal{B}_{m,w,\mathcal{D}^*,X}|
    } = (M_L^w (E_1))(s).
    \end{equation}
\end{theorem}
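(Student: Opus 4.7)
The plan is to adapt the proof of Theorem 11.6 of \cite{KMR14} to our alternating Markov operator $M_L$ and the fans $\mathcal{D}^*$, where $\mathcal{D}^* \in \{\mathcal{D}^A, \mathcal{D}^B, \mathcal{D}^C\}$. The ingredients that make this adaptation work are precisely the analogs of Propositions 9.3--9.5, 10.7, and Theorem 8.1 of \cite{KMR14} proved above, in particular our alternating Markov operator proposition and the Chebotarev equidistribution for $t_\omega(p)$.

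First I would use the equidistribution result (the analog of Proposition 10.7 of \cite{KMR14}) to pass from the character fan $\mathcal{B}_{m,w,\mathcal{D}^*,X}$ to the distribution of Selmer ranks as $\omega$ varies uniformly in $\Omega_\mathfrak{d}$. For each $\mathfrak{d} \in \mathcal{D}^*_{m,w,X}$ we have $\mathbf{N}\mathfrak{d} \leq \prod_{j=1}^m L_j(X)$, and by the recursion $L_{m+1}(X) \geq \mathcal{L}(\prod_{j \leq m} L_j(X))$ the restriction map $\eta_\mathfrak{d}: \mathcal{C}(\mathfrak{d}, \mathcal{L}(L_{m+1}(X))) \to \Omega_\mathfrak{d}$ is surjective with equal fibers, so the left side of \eqref{eq:ourKMR116} rewrites as
\begin{equation*}
    \lim_{X \to \infty} \frac{\sum_{\mathfrak{d} \in \mathcal{D}^*_{m,w,X}} \#\Omega_\mathfrak{d} \cdot E_\mathfrak{d}(s)}{\sum_{\mathfrak{d} \in \mathcal{D}^*_{m,w,X}} \#\Omega_\mathfrak{d}},
\end{equation*}
with $E_\mathfrak{d}$ as in Definition \ref{def:Efrakd}. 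I would then induct on $m$ to show that this weighted average converges to $(M_L^w E_1)(s)$. The base case $m = 0$ is tautological. For the inductive step, decompose $\mathfrak{d} = \mathfrak{d}' q$ with $q$ a single prime and $\mathfrak{d}' \in \mathcal{D}^*_{m-1, w-i, X}$, where $i = \dim_{\mathbb{F}_3} E[3](k_q) \in \{1,2\}$. Lemmas \ref{lem:split_prime_change} and \ref{lem:inert_prime_change} determine the transitions $E_{\mathfrak{d}'} \to E_{\mathfrak{d}'q}$ deterministically in $q$, while the Chebotarev proposition providing the $c_{i,j}(r_\omega)$ table shows that averaging $E_{\mathfrak{d}'q}$ over admissible $q$ of norm bounded by $L_m(X)$ reproduces $M_L^i$ applied to $E_{\mathfrak{d}'}$, up to an error controlled by Theorem 8.1 of \cite{KMR14}. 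Partitioning $\mathcal{D}^*_{m,w,X}$ according to the choice of the extra prime, summing over $i \in \{1,2\}$, and applying the inductive hypothesis then recovers $M_L^w(E_1)$.

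The main obstacle is the error analysis: each of the $m$ Markov steps introduces an effective error from Chebotarev, and these must compound to a vanishing quantity as $X \to \infty$. The sequence $\{L_n\}$ is engineered in Definition \ref{def:fan-structure} precisely so that the effective rate $1/Y$ at step $n$, applied with $Y = L_n(X)$, is compatible with the equidistribution bound at step $n+1$, exactly as in Section 11 of \cite{KMR14}. A subtlety worth verifying is that the transitions of $M_L^2$ really coincide with those induced by adding a single prime of $\mathcal{P}_2^{k,*}$: this reduces to a short calculation matching the $i = 2$ row of the $c_{i,j}(r_\omega)$ table against the composition of two $\pm 2$ transitions of $M_L$, and in particular confirms that the parity of $\dim_{\mathbb{F}_3} \Sel(E[3], \omega)_F$ is preserved in agreement with the alternating structure. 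Finally, the case distinction among $\mathcal{D}^A, \mathcal{D}^B, \mathcal{D}^C$ enters only through which Chebotarev classes define $\mathcal{P}_i^{k,*}$ and does not affect the combinatorial argument.
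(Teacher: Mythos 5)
Your proposal is correct and follows essentially the same path as the paper's (much terser) proof, which simply invokes Proposition 10.7 of \cite{KMR14} for equidistribution of $\eta_{\mathfrak{d}}$ and then defers to the proof of Theorem 11.6 of \cite{KMR14} with $\mathcal{D}^*$ in place of $\mathcal{D}$; you have merely spelled out the inductive and error-control mechanics that reference implicitly carries. The only remark the paper makes that you omit is the observation that equidistribution holds independently of the choice of virtual $3$-Selmer unit $u \in S_3(F_z)[T]$ because $\Sigma$ is defined to contain the places dividing any such $u$ — worth including, since it is the one place where the $S_3$-parametrization could a priori bias the character fan.
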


\begin{proof}
    By Proposition 10.7 of \cite{KMR14}, for any squarefree $\mathfrak{d} \in \mathcal{D}^*$ and $X > \mathcal{L}(N \mathfrak{d})$, the restriction of characters in $C(\mathfrak{d},X)$ to $\Omega_1$ is equidistributed. We note that such equidistribution results are obtained regardless of the choice of the virtual $3$-Selmer units $u \in S_3(F_z)[T]$, because the set $\Sigma$ already includes the places dividing any of the elements in $S_3(F_z)[T]$. 
    The proof of the theorem is the same as Theorem 11.6 in \cite{KMR14} with the appropriate $\mathcal{D}^*$ in place of $\mathcal{D}$.
\end{proof}

\section{Proof of the main theorem}\label{sec:proof-of-main-thms}

We first introduce a couple of notations that will be used in the statement of the main theorem.

\begin{definition}\label{def:rhoE}
    We denote by $\rho_E \in [0,1]$ the probability that $E_1$ is even, i.e.
    \begin{equation}
        \rho_E := \sum_{s \in \mathbb{Z}_{\geq 0, \text{even}}} E_1(s).
    \end{equation}
\end{definition}
Above, $E_1$ is as in Definition \ref{def:Efrakd} with $\mathfrak{d}$ taken to be the empty product of primes in $k$.
\begin{definition} \label{def:FAN}
Let $\mathcal{D}^*$ be one of $\mathcal{D}^A, \mathcal{D}^B, \mathcal{D}^C$ from Definition \ref{def:DA_DB_DC}.
For every $m \geq 0$ and $X > 0$, let 
\begin{equation}
    \mathcal{B}_{m,*}(X) := \bigcup_{w} \mathcal{B}_{m,w,\mathcal{D}^*,X}
\end{equation}
be a collection of $S_3$-cubic extensions $K$ of $k$ with a fixed quadratic resolvent field $F/k$. The right hand side is a union of fan structures as in Definition \ref{def:fan-structure}.
\end{definition}

We now have all the ingredients to state and prove the main theorem. Theorems \ref{theorem:mainA} and \ref{thm:mainB} are immediate consequences of the following result.

\begin{theorem}\label{thm:main-full}
Fix an admissible quadratic extension $F/k$.
    Let $E$ be an elliptic curve over a number field $k$ for which $\text{Gal}(k(E[3])/k) \supset \text{SL}_2(\mathbb{F}_3)$,
    and $F$ and $k(E[3])$ are linearly disjoint over $k$.
    Then for every $s \geq 0$, we have
    \begin{equation}
        \lim_{m \to \infty} \lim_{X \to \infty} \frac{\#\{ K \in \mathcal{B}_{m,*}(X) \; | \; \dim_{\mathbb{F}_3} \Sel_{1-\sigma_K}(B_{K/k}/k) = s\}}{\# \mathcal{B}_{m,*}(X)} = \rho_E \cdot \mathbf{E}_{even}(s) + (1 - \rho_E) \cdot \mathbf{E}_{odd}(s).
    \end{equation}
\end{theorem}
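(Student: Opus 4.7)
The plan is to combine three ingredients: the parameterization of $S_3$-cubic extensions with fixed quadratic resolvent from Lemma \ref{lem:S3byQuad} and Remark \ref{rmk:allcases}, the stratum-by-stratum equidistribution statement Theorem \ref{thm:our-11.6}, and the stationary distribution of the alternating Markov operator $M_L$ from Proposition \ref{prop:equidist-ML}.

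First, I would set up the correspondence between $\chi \in \mathcal{B}_{m,*}(X)$ and $S_3$-cubic extensions $K/k$ with resolvent $F$. Each such $K$ is recovered from the support $\mathfrak{d} \in \mathcal{D}^*$ and a Kummer element encoded by $\chi$, via the tuple $(\mathfrak{a}_0,\mathfrak{a}_1,u)$ of Lemma \ref{lem:S3byQuad}. The crucial identity is
\begin{equation*}
\dim_{\mathbb{F}_3}\Sel_{1-\sigma_K}(B_{K/k}/k) \;=\; \dim_{\mathbb{F}_3}\Sel(E[3],\eta_\mathfrak{d}(\chi))_F,
\end{equation*}
which I would derive by combining Proposition \ref{prop:torsion} and Shapiro's lemma (identifying $\Sel_{1-\sigma_K}(B_{K/k}/k)$ with a subgroup of $H^1(F,E[3])$ cut out by local conditions) with the description of those conditions as coordinate-wise Lagrangian images from Examples \ref{ex:split} and \ref{ex:inert}. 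Lemmas \ref{lem:split_prime_change} and \ref{lem:inert_prime_change} then match these images precisely with the Lagrangians $\mathcal{H}_{\omega_v}$ arising from local characters, converting the question of counting Selmer dimensions over $K$ into one of counting characters with prescribed local restriction.

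With this identification, Theorem \ref{thm:our-11.6} applies stratum by stratum to yield, for each $(m,w)$ with $\bigcup_X \mathcal{D}^*_{m,w,X} \neq \emptyset$,
\begin{equation*}
\lim_{X\to\infty}\frac{|\{\chi \in \mathcal{B}_{m,w,\mathcal{D}^*,X} : \dim_{\mathbb{F}_3}\Sel(E[3],\eta_\mathfrak{d}(\chi))_F = s\}|}{|\mathcal{B}_{m,w,\mathcal{D}^*,X}|} \;=\; (M_L^w(E_1))(s).
\end{equation*}
Writing the disjoint decomposition $\mathcal{B}_{m,*}(X) = \bigsqcup_w \mathcal{B}_{m,w,\mathcal{D}^*,X}$ and setting $\beta_m(w) := \lim_{X\to\infty} |\mathcal{B}_{m,w,\mathcal{D}^*,X}|/|\mathcal{B}_{m,*}(X)|$ (the existence of this limit is a Chebotarev-type count for primes in $\mathcal{P}_i^{k,*}$ afforded by Theorem \ref{thrm:KMR8.1:effectivecheb}), the density after the inner $X \to \infty$ limit equals $\sum_w \beta_m(w)(M_L^w(E_1))(s)$.

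Finally, I would pass $m \to \infty$ using Proposition \ref{prop:equidist-ML}. Because every prime in the support of $\mathfrak{d}\in\mathcal{D}^*_{m,w,X}$ lies in $\mathcal{P}_1^{k,*}\cup \mathcal{P}_2^{k,*}$, each contributes at least $1$ to $w$, so $w \geq m$ on the support of $\beta_m$. Hence $M_L^w(E_1)$ converges pointwise to $\rho_E\mathbf{E}_{\mathrm{even}} + (1-\rho_E)\mathbf{E}_{\mathrm{odd}}$ as $m \to \infty$, since $\rho(E_1) = \rho_E$ by Definition \ref{def:rhoE}; the alternating structure of $M_L$ preserves parity, so the even/odd weight distribution seeded by $E_1$ is precisely what survives in the limit. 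The main obstacle is justifying the interchange of the $X \to \infty$ and $m \to \infty$ limits: for this I would use the effective convergence rates for $(\Omega^S, M_L)$ afforded by Theorem \ref{thrm:KMR8.1:effectivecheb} to upgrade pointwise convergence to an $\ell^1$ statement that is uniform in the weights $\beta_m$, so that the weighted sum converges to the same limit as each individual iterate. Combining these three steps yields the theorem.
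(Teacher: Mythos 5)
Your proposal is correct and follows essentially the same route as the paper's proof: both reduce to the isomorphism $\Sel_{1-\sigma_K}(B_{K/k}/k) \cong \Sel(E[3],\eta_\mathfrak{d}(\chi_K))_F$ (you assemble it from Proposition \ref{prop:torsion}, Shapiro, Examples \ref{ex:split}--\ref{ex:inert}, and Lemmas \ref{lem:split_prime_change}--\ref{lem:inert_prime_change}; the paper packages the same ingredients as Proposition \ref{prop:random-matrix-model} plus Shapiro), then invoke Theorem \ref{thm:our-11.6} stratum by stratum and pass to the stationary distribution via Proposition \ref{prop:equidist-ML}. Your extra detail on the decomposition $\mathcal{B}_{m,*}(X) = \bigsqcup_w \mathcal{B}_{m,w,\mathcal{D}^*,X}$ and the observation that $w \geq m$ (forcing every iterate $M_L^w(E_1)$ in the weighted average toward the stationary distribution as $m \to \infty$) faithfully fills in the step the paper leaves implicit in its citation of Proposition \ref{prop:equidist-ML} and Theorem \ref{thm:our-11.6}.

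One small point of exposition: you describe the final step as ``justifying the interchange of the $X\to\infty$ and $m\to\infty$ limits,'' but no such interchange is required. The theorem's statement is the iterated limit $\lim_{m\to\infty}\lim_{X\to\infty}(\cdots)$, so the inner $X\to\infty$ limit is taken with $m$ fixed and finite. Because $\mathfrak{d}$ is a square-free product of exactly $m$ primes in $\mathcal{P}_1^{k,*}\cup\mathcal{P}_2^{k,*}$, the weight $w$ ranges over the finite set $\{m,\dots,2m\}$, so after the inner limit the density is a finite convex combination $\sum_{w=m}^{2m}\beta_m(w)\,(M_L^w E_1)(s)$. Passing $m\to\infty$ then only requires that $M_L^w(E_1)(s)$ converge to the stationary value uniformly over $w \geq m$, which is exactly what the convergence-rate statement in Proposition \ref{prop:equidist-ML} (together with Theorem \ref{thrm:KMR8.1:effectivecheb}) provides. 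So the uniformity concern you raise is the right substantive issue, even if the ``interchange of limits'' framing slightly overstates the difficulty.
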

\begin{proof}
    Given an $S_3$-cubic extension $K/k$, denote by $\chi_K \in \text{Hom}(G_k,\mu_3)$ the global character whose fixed field is $K$. Using the parametrization of $S_3$-cubics in Section \ref{section:S3-param}, we can find a squarefree product of places $\mathfrak{d} \in \mathcal{D}^*$ such that $\chi_K \in \mathcal{C}(\mathfrak{d})$. 

    Denote by $\eta_\mathfrak{d}: C(\mathfrak{d}) \to \Omega_\mathfrak{d}$ the product of restriction maps of global characters. By Proposition \ref{prop:random-matrix-model} and Shapiro's Lemma,
    \begin{equation}
        \Sel_{1-\sigma_K}(B_{K/k}/k) \cong \Sel(E[3],\eta_\mathfrak{d}(\chi_K))_F.
    \end{equation}
    Hence, the problem of computing the probability distribution of dimensions of $\Sel_{1-\sigma_K}(B_{K/k}/k)$ can be reduced to that of computing the probability distribution of dimensions of local Selmer groups $\Sel(E[3],\eta_\mathfrak{d}(\chi_K))_F$. The theorem then follows from applying Proposition \ref{prop:equidist-ML} and Theorem \ref{thm:our-11.6}. The parity $\rho_E$ appears from the fact that the Markov operator $M_L$ preserves the parity of the dimensions of local Selmer groups $\Sel(E[3],\eta_\mathfrak{d}(\chi_K))_F$.
\end{proof}

As a corollary, we are able to obtain quadratic exponential decay in the probability that an elliptic curve achieves arbitrarily large rank growths over fan structures of $S_3$-cubic extensions.
\begin{corollary}\label{cor:exponential-decay}
Fix an admissible quadratic extension $F/k$.
    Let $E$ be an elliptic curve over a number field $k$ for which $\text{Gal}(k(E[3])/k) \supset \text{SL}_2(\mathbb{F}_3)$,
    and $F$ and $k(E[3])$ are linearly disjoint over $k$.

    Let $C := \prod_{k=1}^\infty \frac{1}{1-3^{-k}} \approx 1.785312342$. Then for every $s \geq 4$, we have
    \begin{equation}
    \lim_{m \to \infty} \lim_{X \to \infty} \frac{\#\{K \in \mathcal{B}_{m,*}(X) \; | \; \text{rk}(E(K)) - \text{rk}(E(k)) \geq s \}}{\# \mathcal{B}_{m,*}(X)} < \begin{cases} 
    C \cdot 3^{-\frac{s(s-2)}{8}} &\text{ if } s \text{ even } \\
    C \cdot 3^{-\frac{(s-1)(s-3)}{8}} &\text{ if } s \text{ odd}.
    \end{cases}
    \end{equation}
\end{corollary}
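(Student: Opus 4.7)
The plan is to combine the inequality $\mathrm{rk}(E(K)) - \mathrm{rk}(E(k)) \leq \dim_{\mathbb{F}_3}\Sel_{1-\sigma_K}(B_{K/k}/k)$ recalled in Section \ref{sec:measure-rank-growth} with Theorem \ref{thm:main-full} to reduce the claim to a direct tail bound on the probabilities $\mathbf{E}_{even}$ and $\mathbf{E}_{odd}$. Since the density being measured is at most the density of $K$ with $\dim_{\mathbb{F}_3}\Sel_{1-\sigma_K}(B_{K/k}/k) \geq s$, Theorem \ref{thm:main-full} yields the upper bound
\[
\rho_E \sum_{t \geq s}\mathbf{E}_{even}(t) + (1-\rho_E)\sum_{t \geq s}\mathbf{E}_{odd}(t),
\]
a convex combination, hence bounded by the larger of the two tails.

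Next I would rewrite the finite product appearing in Proposition \ref{prop:equidist-ML} as
\[
\prod_{k=1}^{n}\frac{3}{3^k-1} \;=\; \frac{3^{-n(n-1)/2}}{\prod_{k=1}^{n}(1-3^{-k})},
\]
so that $\mathbf{E}_{even}(2n) = \mathbf{E}_{odd}(2n+1) = P \cdot 3^{-n(n-1)/2} / \prod_{k=1}^{n}(1-3^{-k})$, where $P := \prod_{k\geq 0}(1+3^{-k})^{-1} \approx 0.3195$. Setting $n = \lfloor s/2 \rfloor$, one checks $n(n-1)/2 = s(s-2)/8$ when $s$ is even and $n(n-1)/2 = (s-1)(s-3)/8$ when $s$ is odd. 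In both parity cases, the larger tail has the form
\[
P\sum_{j\geq 0}\frac{3^{-(n+j)(n+j-1)/2}}{\prod_{k=1}^{n+j}(1-3^{-k})} \;=\; P\cdot 3^{-n(n-1)/2}\sum_{j\geq 0}\frac{3^{-nj - j(j-1)/2}}{\prod_{k=1}^{n+j}(1-3^{-k})}.
\]
Bounding each finite Euler product strictly by its infinite value $\prod_{k=1}^{\infty}(1-3^{-k}) = C^{-1}$ gives the upper estimate
\[
P\cdot C\cdot 3^{-n(n-1)/2}\cdot S_n, \qquad S_n := \sum_{j\geq 0} 3^{-nj-j(j-1)/2}.
\]

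The corollary therefore reduces to the inequality $P\cdot S_n < 1$ for $n\geq 2$, i.e.\ starting from the boundary cases $s=4$ and $s=5$. Since the exponents $nj + j(j-1)/2$ are monotonically increasing in $n$ for each fixed $j$, $S_n$ is decreasing in $n$, so it suffices to verify the inequality at $n=2$: a direct tail estimate gives $S_2 \leq 1 + 3^{-2} + 3^{-5} + 3^{-9} + \cdots < 1.12$, whence $P\cdot S_2 < 0.36 < 1$. I do not anticipate any real obstacle here, since Theorem \ref{thm:main-full} delivers the distribution in closed form; the content of the argument is the algebraic identification of the quadratic exponent in $s$ and the crude uniform bound on the Euler product. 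The only point requiring care is the parity alignment: one must verify that both the even-indexed tail $\sum_{m\geq n}\mathbf{E}_{even}(2m)$ and the odd-indexed tail $\sum_{m\geq n}\mathbf{E}_{odd}(2m+1)$ lead to the same bound, which is immediate from $\mathbf{E}_{even}(2m) = \mathbf{E}_{odd}(2m+1)$.
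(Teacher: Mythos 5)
Your proposal is correct and follows essentially the same path as the paper's proof: bound the rank-growth tail by the Selmer-dimension tail via $\mathrm{rk}(E(K)) - \mathrm{rk}(E(k)) \leq \dim_{\mathbb{F}_3}\Sel_{1-\sigma_K}(B_{K/k}/k)$, invoke Theorem \ref{thm:main-full} to get a $\rho_E$-convex combination whose two tails coincide term-by-term (since $\mathbf{E}_{even}(2n) = \mathbf{E}_{odd}(2n+1)$), extract the leading factor $3^{-n(n-1)/2}$ with $n = \lfloor s/2\rfloor$, bound the partial Euler products by $C$, and verify $P\cdot S_n < 1$ numerically starting at $n=2$. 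The only cosmetic difference is that the paper treats the odd $s$ case by reducing to $s-1$ even rather than running the even and odd tails in parallel as you do; the estimates are equivalent.
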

\begin{proof}
    Without loss of generality, assume $s$ is even and $s \geq 4$. We have
    \begin{equation*}
        \frac{\#\{K \in \mathcal{B}_{m,*}(X) \; | \; \text{rk}(E(K)) - \text{rk}(E(k)) \geq s \}}{\# \mathcal{B}_{m,*}(X)} \leq \frac{\#\{K \in \mathcal{B}_{m,*}(X) \; | \; \dim_{\mathbb{F}_3}\Sel_{1-\sigma_K}(B_{K/k}/k) \geq s \}}{\# \mathcal{B}_{m,*}(X)}
    \end{equation*}
    Taking arbitrarily large values of $m$ and $X$ on both sides of the equation, we obtain
    \begin{align}
        \begin{split}
        & \; \; \; \; \lim_{m \to \infty} \lim_{X \to \infty }\frac{\#\{K \in \mathcal{B}_{m,*}(X) \; | \; \text{rk}(E(K)) - \text{rk}(E(k)) \geq s \}}{\# \mathcal{B}_{m,*}(X)} \\
        &\leq \prod_{k=0}^\infty \frac{1}{1 + 3^{-k}} \cdot \left( \sum_{j = \frac{s}{2}}^\infty \prod_{k=1}^{j} \frac{3}{3^k - 1} \right) \\
        &\leq \frac{3^{\frac{s}{2}}}{3^{\frac{s}{2}} - 1} \cdot \prod_{k=1}^{\frac{s}{2}-1} \frac{1}{3^k - 1} \cdot \prod_{k=0}^\infty \frac{1}{1 + 3^{-k}} \cdot \left( 1 + \sum_{j=\frac{s}{2}+1}^\infty \prod_{k = \frac{s}{2} + 1}^j \frac{3}{3^k - 1} \right) \\
        &\leq \left(\frac{3^{\frac{s}{2}}}{3^{\frac{s}{2}}-1} \cdot \prod_{k=1}^{\frac{s}{2}-1} \frac{1}{1 - 3^{-k}} \right) \cdot 3^{-\frac{s(s-2)}{8}} \\
        &\leq \prod_{k=1}^{\infty} \frac{1}{1 - 3^{-k}} \cdot 3^{-\frac{s(s-2)}{8}}.
        \end{split}
    \end{align}
    The upper bound for the case when $s$ is odd can be obtained from using the upper bound for the probability that $\text{rk}(E(K)) - \text{rk}(E(k)) \geq s-1$.
\end{proof}

\nocite{*}
\bibliographystyle{alpha}
\bibliography{main}

\end{document}